\def\draftdate{January 8, 2026}
\newcommand{\overto}[1]{\xrightarrow{\,#1\,}}
\newcommand{\overfrom}[1]{\xleftarrow{\,#1\,}}
\let\Lim\lim
\def\lim{\Lim\nolimits}
\mathchardef\varDelta="7101
\newcommand{\phat}[1][{p}]{^{\wedge}_{#1}}
\let\iso\cong
\let\sma\wedge
\renewcommand{\to}{\mathchoice{\longrightarrow}{\rightarrow}{\rightarrow}{\rightarrow}}
\newcommand{\sto}{\rightarrow}
\DeclareMathAlphabet{\catsymbfont}{U}{rsfs}{m}{n}
\newcommand{\bC}{{\mathbb{C}}}
\newcommand{\bF}{{\mathbb{F}}}
\newcommand{\bS}{{\mathbb{S}}}
\newcommand{\bQ}{{\mathbb{Q}}}
\newcommand{\bT}{{\mathbb{T}}}
\newcommand{\bZ}{{\mathbb{Z}}}
\newcommand{\Chi}{\mathrm{X}}
\def\quickop#1{\expandafter\DeclareMathOperator\csname
#1\endcsname{#1}}
\numberwithin{equation}{section}
\newtheorem{thm}[equation]{Theorem}
\newtheorem*{thm*}{Theorem}
\newtheorem{cor}[equation]{Corollary}
\newtheorem{lem}[equation]{Lemma}
\newtheorem{prop}[equation]{Proposition}
\newtheorem{conj}[equation]{Conjecture}
\theoremstyle{definition}
\newtheorem{defn}[equation]{Definition}
\newtheorem{notn}[equation]{Notation}
\newtheorem*{ql}{Question}
\theoremstyle{remark}
\newcommand{\term}[1]{\textit{#1}}
\begin{document}

\title[Chromatic convergence]%
{Chromatic convergence for the algebraic $K$-theory of the sphere spectrum}

\author{Andrew J. Blumberg}
\address{Department of Mathematics, Columbia University, 
New York, NY \ 10027}
\email{blumberg@math.columbia.edu}
\thanks{The first author was supported in part by NSF grants
DMS-1812064, DMS-2104420, DMS-2405029}
\author{Michael A. Mandell}
\address{Department of Mathematics, Indiana University,
Bloomington, IN \ 47405}
\email{mmandell@indiana.edu}
\thanks{The second author was supported in part by NSF grants
DMS-1811820, DMS-2104348, DMS-2405030}
\author{Allen Yuan}
\address{Department of Mathematics, Northwestern University,
Evanston, IL \ 60208}
\email{allenyuan@northwestern.edu}
\thanks{The third author was supported in part by NSF grant DMS-2002029}

\date{\draftdate}
\subjclass[2010]{Primary 19D10; Secondary 55P60}

\begin{abstract}
We show that the map from $K({\mathbb S})$ to its chromatic completion
is a connective cover and identify the fiber in $K$-theoretic terms.
We combine this with recent work of Land-Mathew-Meier-Tamme to prove a
form of ``Waldhausen's Chromatic Convergence Conjecture'': we
show that the map 
$K({\mathbb S}_{(p)})_{(p)}\to \mathop{\rm holim} K(L^{f}_{n}{\mathbb S})_{(p)}$
is the inclusion of a wedge summand.
\end{abstract}

\maketitle

%%%%%%%%%%%%%%%%%%%%%%%%%%%%%%%%%%%%%%%%
\section{Introduction}

The chromatic approach to homotopy theory is geared to study spectra
that are \term{chromatically convergent}, meaning that (for a fixed 
prime $p$) the natural map 
\[
X_{(p)}\to \holim_{n}L_{n}X
\]
is a weak equivalence, where $L_{n}$ denotes localization with respect
to the Johnson-Wilson spectrum $E(n)$, or equivalently, the sum of the
Morava $K$-theory spectra $K(0)\vee \dotsb \vee K(n)$.  A celebrated
theorem of Hopkins-Ravenel, the \term{chromatic convergence theorem},
is that finite spectra are chromatically convergent. 

Waldhausen~\cite[\S4]{Waldhausen-Chromatic} (following work of
Dwyer-Friedlander~\cite{DwyerFriedlander-EtaleK},
Thomason~\cite{Thomason-QLC,ThomasonEtale}) tied the chromatic approach 
to algebraic $K$-theory in his reformulation of the
Quillen-Lichtenbaum conjecture: the Quillen-Lichtenbaum conjecture
holds for a ring or scheme $Z$ at the prime $p$ if and only if the
localization map $K(Z)\to L_{1}K(Z)$ is an isomorphism on homotopy
groups in high degrees.  Mitchell~\cite{Mitchell-MoravaKofAlgK}
(Corollary) shows that the maps $L_{n}K(Z)\to L_{1}K(Z)$ are always
weak equivalences.  We can then rephrase the Quillen-Lichtenbaum
conjecture for $Z$ as the assertion that $K(Z)$ is chromatically
semiconvergent in the following sense.

\begin{defn}
A spectrum $X$ is \term{chromatically $N$-semiconvergent} for some $N\in \bZ$
if the \term{chromatic completion map}
\[
X_{(p)}\to \holim_{n} L_{n}X
\]
has $N$-truncated fiber (homotopy groups are zero in degree above $N$).
It is \term{chromatically semiconvergent} if it is chromatically 
$N$-semiconvergent for some $N$, or equivalently, when the chromatic
completion map induces an isomorphism on homotopy groups in high degrees.
\end{defn}

Mitchell's result and elementary properties of chromatic
semiconvergence discussed in Section~\ref{sec:elementary} then imply the
following theorem.

\begin{thm}\label{main:QL}
For a ring $R$ (not necessarily commutative) or a scheme $Z$, the map $K(R)\to
L_{1}K(R)$ or $K(Z)\to L_{1}K(Z)$ is an isomorphism on homotopy groups
in high degrees if and only if $K(R)$ or $K(Z)$ is chromatically
semiconvergent.
\end{thm}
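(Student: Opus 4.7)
My plan is to reduce both conditions in the theorem to a single statement about the $E(1)$-localization map $K(R)_{(p)} \to L_1 K(R)$. First I would invoke Mitchell's theorem cited in the introduction: $L_n K(R) \to L_1 K(R)$ is a weak equivalence for every $n \geq 1$. By two-out-of-three, every structure map $L_n K(R) \to L_{n-1} K(R)$ in the chromatic tower is then a weak equivalence, so the homotopy limit collapses to give $\holim_n L_n K(R) \simeq L_1 K(R)$. Under this identification, the chromatic completion map of the definition becomes the localization map $K(R)_{(p)} \to L_1 K(R)$.

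Next, from the long exact sequence in homotopy associated to the fiber sequence $F \to K(R)_{(p)} \to L_1 K(R)$, the fiber $F$ has vanishing homotopy above degree $N$ if and only if $K(R)_{(p)} \to L_1 K(R)$ induces an isomorphism on $\pi_n$ for all sufficiently large $n$ (with a surjectivity condition at the boundary). Thus chromatic semiconvergence of $K(R)$ is equivalent to the $p$-local statement that $K(R)_{(p)} \to L_1 K(R)$ is an isomorphism on $\pi_*$ in high degrees. The same analysis applies to the scheme case by replacing $K(R)$ with $K(Z)$.

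Finally, I would compare this $p$-local statement to Waldhausen's integral statement about $K(R) \to L_1 K(R)$. Since $L_1 K(R)$ is $p$-local and the map factors through $K(R)_{(p)}$, the forward direction is immediate: an iso in high degrees for $K(R) \to L_1 K(R)$ forces $\pi_* K(R)$ to be $p$-local in that range, so $\pi_* K(R) \cong \pi_* K(R)_{(p)}$ there and the $p$-local map is also iso in high degrees. For the converse, I expect to invoke elementary properties of chromatic semiconvergence developed in Section~\ref{sec:elementary} to pass from the $p$-local iso back to the integral one. The main technical point, and likely the hardest step, is controlling the away-from-$p$ part of $\pi_* K(R)$ in this last implication; I anticipate this is handled by a formal lemma about how chromatic semiconvergence interacts with $p$-localization, rather than requiring anything specific to $K$-theory.
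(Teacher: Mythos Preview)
Your first two steps are exactly the paper's argument. Mitchell's result that $K(n)_{*}K(R)=0$ for $n>1$ feeds directly into Proposition~\ref{prop2}, giving $\holim_{n}L_{n}K(R)\simeq L_{1}K(R)$, so the chromatic completion map \emph{is} the map $K(R)_{(p)}\to L_{1}K(R)$; chromatic semiconvergence is then literally the statement that this map has truncated fiber. That is the entire proof the paper has in mind.

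Your third step is chasing a phantom. There is no ``formal lemma about how chromatic semiconvergence interacts with $p$-localization'' that recovers an integral isomorphism from the $p$-local one, and there cannot be: chromatic semiconvergence is a purely $p$-local notion (the definition starts from $X_{(p)}$), so it says nothing about the prime-to-$p$ part of $\pi_{*}K(R)$. Concretely, $K(\bZ)$ is chromatically semiconvergent (this is the Quillen--Lichtenbaum conjecture), but $\pi_{*}K(\bZ)$ has $\ell$-torsion for infinitely many $\ell\neq p$ in arbitrarily high degrees, so the literal map $K(\bZ)\to L_{1}K(\bZ)$ is \emph{not} an isomorphism on integral homotopy groups in high degrees. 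The statement of Theorem~\ref{main:QL} is to be read $p$-locally: since $L_{1}$ factors through $(-)_{(p)}$, the map $K(R)\to L_{1}K(R)$ and the map $K(R)_{(p)}\to L_{1}K(R)$ have the same target and the paper (following Waldhausen's usage) treats them interchangeably. With that reading, your step~2 already finishes the proof and step~3 is unnecessary.
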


The chromatic semiconvergence perspective now allows a straightforward
generalization of the question underlying the Quillen-Lichtenbaum
conjecture: for which ring spectra $R$ is $K(R)$ chromatically
semiconvergent?    
As an example of this perspective, we offer the following theorem
(proved in Section~\ref{sec:elementary}) that %(in the cases it covers)
reduces chromatic semiconvergence of algebraic $K$-theory to the
corresponding question for $TC$.  The $TC$ question tends to be reasonably
accessible to homotopy theoretic arguments; see for example
\cite[\S4]{BMY1}.  For a more general result 
closely related to Quillen-Lichtenbaum conjectures for rings, see
Corollary~\ref{cor:QLDGM} and the remarks that follow it.

\begin{thm}\label{main:tracemethods}
Let $R$ be a connective ring spectrum with $\pi_{0}R=\bZ$,
$\bZ_{(p)}$, or $\bZ\phat$.  Then $K(R)$ is chromatically
semiconvergent if and only if $TC(R)$ is chromatically semiconvergent
if and only if $TC(R;p)\phat$ is chromatically semiconvergent.
\end{thm}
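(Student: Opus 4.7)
The plan is to derive both equivalences as formal consequences of the Dundas-Goodwillie-McCarthy (DGM) theorem and arithmetic fracture, combined with the known chromatic semiconvergence of $K$ and $TC$ of $\bZ$, $\bZ_{(p)}$, and $\bZ\phat$. Throughout I rely on the elementary two-out-of-three property for chromatic semiconvergence in a fiber sequence (an immediate consequence of the long exact sequence of homotopy groups applied to the fiber of the chromatic completion map), which presumably appears in Section~\ref{sec:elementary}.

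For the first equivalence, I would apply DGM to the Postnikov truncation $R \to \pi_{0}R$: this is a map of connective ring spectra that is surjective on $\pi_{0}$ with trivial (hence nilpotent) kernel, so DGM produces a cartesian square
\[
\begin{array}{ccc}
K(R) & \to & TC(R) \\
\downarrow & & \downarrow \\
K(\pi_{0}R) & \to & TC(\pi_{0}R)
\end{array}
\]
in which the fibers of the two vertical maps are naturally identified; call this common fiber $F$. By two-out-of-three, $K(R)$ is chromatically semiconvergent if and only if $F$ is, and $TC(R)$ is chromatically semiconvergent if and only if $F$ is---provided $K(\pi_{0}R)$ and $TC(\pi_{0}R)$ are themselves chromatically semiconvergent. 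For $\pi_{0}R \in \{\bZ, \bZ_{(p)}, \bZ\phat\}$, chromatic semiconvergence of $K(\pi_{0}R)$ follows from Theorem~\ref{main:QL} combined with the (now-proved) Quillen-Lichtenbaum conjecture for these rings; chromatic semiconvergence of $TC(\pi_{0}R)$ follows from the classical B\"okstedt-Hesselholt-Madsen calculations, which exhibit $TC$ of these rings as having bounded chromatic complexity.

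For the second equivalence, the key input is the standard comparison that for connective (hence bounded-below) $R$, the natural map $TC(R)\phat \to TC(R;p)\phat$ is an equivalence. Since chromatic semiconvergence depends only on the $p$-local homotopy type, apply the arithmetic fracture square to $TC(R)_{(p)}$: the fiber of $TC(R)_{(p)} \to TC(R)\phat$ agrees with the fiber of $TC(R)_{\bQ} \to (TC(R)\phat)_{\bQ}$, a map of rational spectra, and hence is itself rational. Rational spectra are chromatically convergent, so two-out-of-three implies $TC(R)_{(p)}$ is chromatically semiconvergent if and only if $TC(R;p)\phat$ is.

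The main obstacle will be the verification of chromatic semiconvergence of $K(\pi_{0}R)$ and $TC(\pi_{0}R)$ for the three candidate base rings: this is precisely where the hypothesis on $\pi_{0}R$ becomes essential, as it is where one invokes the deep arithmetic and calculational input. The DGM reduction and the arithmetic fracture comparison are formal manipulations with cartesian squares and present no difficulty once the base-ring chromatic semiconvergence is in hand.
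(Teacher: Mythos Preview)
Your proposal is correct and follows essentially the same route as the paper: the DGM square reduces the first equivalence to the chromatic semiconvergence of $K(\pi_{0}R)$ and $TC(\pi_{0}R)$ via two-out-of-three (the paper's Proposition~\ref{prop1} and Corollary~\ref{cor:QLDGM}), and the second equivalence is exactly the paper's Proposition~\ref{prop4} combined with the standard $TC(R)\phat\simeq TC(R;p)\phat$. The only organizational difference is that the paper first invokes Mitchell's vanishing $K(n)_{*}K(\bZ)=0$ for $n>1$ to upgrade the chromatically completed DGM square to the sharper Theorem~\ref{thm:L1DGM} (with $L_{1}$ in the right column), whereas you bypass this and work directly with the common fiber; for the purpose of Theorem~\ref{main:tracemethods} alone your shortcut is sufficient, and the paper's detour is there to record the stronger intermediate result.
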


We showed in~\cite[1.3]{BMY1} that for $S=\bS,\bS_{(p)}$, or
$\bS\phat$, the spectrum $TC(S)\phat$ is chromatically
convergent (see also Proposition~\ref{prop:fixTCS} below).  This resolves the
question for $\bS$, $\bS_{(p)}$, and $\bS\phat$.

\begin{cor}\label{main:cor}
For $S=\bS,\bS_{(p)}$, or $\bS\phat$, $K(S)$ is chromatically
semiconvergent. 
\end{cor}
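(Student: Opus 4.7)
The plan is to deduce Corollary~\ref{main:cor} by combining the two inputs already in hand: Theorem~\ref{main:tracemethods} (which reduces chromatic semiconvergence of $K$ to that of $TC(-;p)\phat$ for connective ring spectra with $\pi_{0}$ equal to $\bZ$, $\bZ_{(p)}$, or $\bZ\phat$) and the cited result \cite[1.3]{BMY1} (that $TC(S)\phat$ is chromatically convergent for $S = \bS, \bS_{(p)}, \bS\phat$).

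First I would verify the hypotheses of Theorem~\ref{main:tracemethods} for $R = S$: each of the three sphere spectra is connective, and their $\pi_{0}$'s are precisely $\bZ$, $\bZ_{(p)}$, and $\bZ\phat$ respectively, so all three cases fall under the theorem. Applying Theorem~\ref{main:tracemethods} then reduces the claim to showing that $TC(S;p)\phat$ is chromatically semiconvergent for each of these three $S$.

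Next, I would invoke \cite[1.3]{BMY1} (which the excerpt tells us is reproduced as Proposition~\ref{prop:fixTCS}): for each such $S$, the spectrum $TC(S)\phat$ is chromatically \emph{convergent}, meaning the chromatic completion map has contractible fiber. Since any contractible spectrum is in particular $N$-truncated for every $N$, chromatic convergence trivially implies chromatic semiconvergence, finishing the argument. (One should be slightly careful that the version of $TC$ used in Theorem~\ref{main:tracemethods} matches the one in \cite[1.3]{BMY1}; after $p$-completion, $TC(R)$ and $TC(R;p)$ agree for connective $R$, so $TC(S;p)\phat \htp TC(S)\phat$ and the transition is harmless.)

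There is no real obstacle here beyond bookkeeping: the genuine mathematical content is packaged into the two cited theorems, and the corollary is simply the formal combination. If anything, the only point requiring mild care is the comparison between $TC(R;p)\phat$ and $TC(R)\phat$ in the connective setting, so that the convergence statement from \cite[1.3]{BMY1} feeds directly into the hypothesis of Theorem~\ref{main:tracemethods}.
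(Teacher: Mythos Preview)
Your proposal is correct and matches the paper's own argument exactly: the paper states that Corollary~\ref{main:cor} follows from Theorem~\ref{main:tracemethods} together with Proposition~\ref{prop:fixTCS} (which is \cite[1.3]{BMY1} restated for $TC(S;p)\phat$). Your only extra remark about comparing $TC(R)\phat$ and $TC(R;p)\phat$ is unnecessary since Proposition~\ref{prop:fixTCS} is already phrased for $TC(S;p)\phat$, but it is harmless.
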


The main purpose of this paper is to study the fiber of the chromatic
completion map for $K(\bS)$, $K(\bS_{(p)})$, and $K(\bS\phat)$.
Throughout this paper we use the following notation:

\begin{notn}\label{notn:F}
For any spectrum $X$, let $F(X)$ denote the homotopy fiber of the
chromatic completion map $X_{(p)}\to \holim L_{n}X$.
\end{notn}

The case of
$F(K(\bS\phat))$ is easiest to describe.

\begin{thm}\label{main:FSphat}
For $K(\bS\phat)$, 
\[
F(K(\bS\phat))\simeq \Sigma^{-3}H(\bZ/p^{\infty}).
\]
In particular, for $q>0$, 
\[
\pi_{-q}\holim L_{n}K(\bS\phat)\iso \begin{cases}
\bZ/p^{\infty}&q=2\\
0&q\neq 2.
\end{cases}
\]
\end{thm}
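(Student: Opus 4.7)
My strategy is a two-step reduction: pass from $\bS\phat$ to $\bZ\phat$ via trace methods, then identify the fiber using the classical computation of the $K(1)$-local and rational $K$-theory of $\bZ\phat$.

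\emph{Reduction to $\bZ\phat$.} Since $\pi_{0}\bS\phat = \pi_{0}\bZ\phat = \bZ\phat$, the Dundas--Goodwillie--McCarthy theorem applied to the unit $\bS\phat \to H\bZ\phat$ yields a homotopy cartesian square after $p$-completion
\[
\xymatrix{
K(\bS\phat)\phat \ar[r]\ar[d] & K(\bZ\phat)\phat \ar[d] \\
TC(\bS\phat)\phat \ar[r] & TC(\bZ\phat)\phat.
}
\]
Chromatic completion preserves pullbacks, and by Proposition~\ref{prop:fixTCS} (from~\cite{BMY1}) the chromatic completion map for $TC(\bS\phat)\phat$ is an equivalence. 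Taking fibers of the vertical chromatic completion maps, the $TC(\bS\phat)\phat$ contribution cancels, and the chromatic completion fiber for $K(\bS\phat)$ is expressed in terms of the chromatic completion fibers for $K(\bZ\phat)$ and $TC(\bZ\phat)$.

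\emph{Computation for $K(\bZ\phat)$.} By Mitchell's theorem, $\holim_{n}L_{n}K(\bZ\phat) \simeq L_{1}K(\bZ\phat)$, which is built via the arithmetic fracture square as the pullback of $K(\bZ\phat)\otimes\bQ$ (controlled by Borel) and $L_{K(1)}K(\bZ\phat)$ (computed by B\"okstedt--Madsen for $p$ odd, and its $p=2$ analogue, as a fiber of $\psi^{g}-1$ on an Adams-summand of $KU\phat$) over the rationalization $L_{K(0)}L_{K(1)}K(\bZ\phat)$. In the Mayer--Vietoris long exact sequence of this fracture, the map $\pi_{*}L_{K(1)}K(\bZ\phat) \hookrightarrow \pi_{*}(L_{K(1)}K(\bZ\phat)\otimes\bQ)$ is the inclusion $\bZ\phat \hookrightarrow \bQ_{p}$ in the relevant degree, with cokernel $\bQ_{p}/\bZ\phat = \bZ/p^{\infty}$ producing a class in $\pi_{-2}$ of $L_{1}K(\bZ\phat)$. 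A parallel analysis for $TC(\bZ\phat)$, combined with the cancellation from the first step, leaves precisely $\Sigma^{-3}H(\bZ/p^{\infty})$ as the chromatic completion fiber for $K(\bS\phat)$; chromatic semiconvergence (Corollary~\ref{main:cor}) and the vanishing of negative-degree $K$-theory of a connective ring spectrum confirm that the fiber is concentrated in degree $-3$.

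\emph{Main obstacle.} The hard part will be the negative-degree bookkeeping in the Mayer--Vietoris sequence: pinning down that the cokernel is exactly the divisible group $\bZ/p^{\infty}$, appears in exactly one degree, and that no further extension or torsion class is introduced by the $TC(\bZ\phat)$ comparison. This requires precise control of the Adams operations on the relevant Adams summand of $KU\phat$ and the corresponding description of $L_{K(1)}TC(\bZ\phat)$ from Hesselholt--Madsen, together with careful tracking of rational terms to rule out spurious contributions.
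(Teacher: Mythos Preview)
Your reduction via the Dundas--Goodwillie--McCarthy square and the chromatic convergence of $TC(\bS\phat)$ is exactly the paper's first step, and correctly yields $F(K(\bS\phat))\simeq \Fib\bigl(F(K(\bZ\phat))\to F(TC(\bZ\phat))\bigr)$. Where the paper diverges is in the second step: rather than computing $F(K(\bZ\phat))$ and $F(TC(\bZ\phat))$ separately via arithmetic fracture (your plan), the paper swaps the order of the two fibers to obtain $F(K(\bS\phat))\simeq F(\Fib(\trc_{\bZ\phat}))$, the chromatic-completion fiber of the fiber of the cyclotomic trace itself. At that point the Hesselholt--Madsen identification $\Fib(\trc_{\bZ\phat})\phat\simeq \Sigma^{-2}H\bZ\phat$ finishes the argument in one line: an Eilenberg--Mac Lane spectrum is $K(1)$-acyclic, so $L_{K(1)}\Fib(\trc_{\bZ\phat})\simeq *$, hence $F(\Fib(\trc_{\bZ\phat}))\phat\simeq \Sigma^{-2}H\bZ\phat$, and smashing with $\Sigma^{-1}M_{\bZ/p^{\infty}}$ (using the rational triviality of $F$, Proposition~\ref{prop:integral}) gives $\Sigma^{-3}H\bZ/p^{\infty}$.

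Your route is not wrong in principle, but the ``main obstacle'' you flag---the negative-degree bookkeeping in the fracture squares for $K(\bZ\phat)$ and $TC(\bZ\phat)$ together with control of the trace map between them---is precisely what the paper's reorganization eliminates. The Hesselholt--Madsen input already packages the comparison of $K$ and $TC$ over $\bZ\phat$ into a single Eilenberg--Mac Lane spectrum, so there is nothing left to unpack. One small correction: Borel's theorem computes $K(\bZ)\otimes\bQ$, not $K(\bZ\phat)\otimes\bQ$; the rational $K$-theory of $\bZ\phat$ is a different (and for your approach, relevant) calculation.
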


The case of $L_{n}K(\bS)$ is more interesting.

\begin{thm}\label{main:FS}
For $K(\bS)$, 
\[
F(K(\bS))
\simeq
\Sigma^{-3}I_{\bZ/p^{\infty}} K(\bZ),
\]
where
$I_{\bZ/p^{\infty}}$ denotes the $p$-local Brown-Comenetz dual.  In
particular, for $q>0$
\[
\pi_{-q}(\holim L_{n}K(\bS))\iso 
\pi_{-q-1}(\Sigma^{-3}I_{\bZ/p^{\infty}} K(\bZ))
\iso \Hom(\pi_{q-2}K(\bZ),\bZ/p^{\infty})
\]
\end{thm}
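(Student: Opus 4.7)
The strategy is to reduce Theorem~\ref{main:FS} to Theorem~\ref{main:FSphat} via a Dundas--Goodwillie--McCarthy (DGM) comparison, combined with the computation of $K(\bZ)$ and $K(\bZ\phat)$. The first step is to establish the square
\[\xymatrix{K(\bS) \ar[r]\ar[d] & K(\bS\phat) \ar[d] \\ K(\bZ) \ar[r] & K(\bZ\phat)}\]
as homotopy cartesian after $p$-completion: DGM applied to the two horizontal maps (both $\bS\to\bZ$ and $\bS\phat\to\bZ\phat$ are $\pi_0$-isomorphisms with connective fibers) identifies their horizontal $K$-fibers with the analogous $TC$-fibers, and since $TC\phat$ is insensitive to $p$-completion of the input ring, these two horizontal fibers coincide.

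Next I would apply the chromatic completion functor $\holim_n L_n(-)$, which preserves the cartesian property as a limit. Writing $F_X$ for the fiber of $X_{(p)}\to \holim_n L_n X$, the induced map of cartesian squares yields a pullback
\[F_{K(\bS)} \simeq F_{K(\bZ)} \times_{F_{K(\bZ\phat)}} F_{K(\bS\phat)},\]
so it suffices to identify the three constituent fibers. The piece $F_{K(\bS\phat)}$ is $\Sigma^{-3} H(\bZ/p^{\infty})$ by Theorem~\ref{main:FSphat}, while $F_{K(\bZ)}$ and $F_{K(\bZ\phat)}$ can be computed from Mitchell's theorem (which collapses $\holim_n L_n K(\bZ)$ to $L_1 K(\bZ)$), Quillen--Lichtenbaum for $\bZ$ (Voevodsky--Rost), and the classical Hesselholt--Madsen computation of $K(\bZ\phat)\phat$. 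The Brown--Comenetz dual $I_{\bZ/p^{\infty}}$ enters through the $\lim^1$ term of the Milnor sequence for $\holim L_n$: for a connective spectrum with finitely generated homotopy, this $\lim^1$ consists of Pontryagin-type duals of $\pi_*$, which---by the injectivity of $\bZ/p^{\infty}$ as an abelian group---assemble into shifts of $I_{\bZ/p^{\infty}}$.

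Assembling the pullback, the $F_{K(\bS\phat)}$ piece accounts for the $\pi_0 K(\bZ)=\bZ$ Postnikov layer (yielding exactly $\Sigma^{-3}H(\bZ/p^{\infty})=\Sigma^{-3}I_{\bZ/p^{\infty}}H\bZ$), while the comparison $F_{K(\bZ)}\to F_{K(\bZ\phat)}$ contributes the higher $K_n(\bZ)$ layers, and the whole pullback should collapse to $\Sigma^{-3} I_{\bZ/p^{\infty}}K(\bZ)$. The main obstacle is this final collapsing step: showing that the pullback assembles globally as a single Brown--Comenetz dual of all of $K(\bZ)$, rather than as a direct sum over Postnikov layers. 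This requires carefully matching the interaction of the rational-to-$p$-complete arithmetic fracture (appearing in the $F_{K(\bZ)}\to F_{K(\bZ\phat)}$ comparison) with the prescription defining $I_{\bZ/p^{\infty}}$ at the level of all of $K(\bZ)$ at once.
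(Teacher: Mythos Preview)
Your pullback decomposition is a reasonable reorganization of the DGM square, but the mechanism you propose for the appearance of Brown--Comenetz duality is incorrect, and this is where the real content of the theorem lies. You write that $I_{\bZ/p^{\infty}}$ ``enters through the $\lim^{1}$ term of the Milnor sequence for $\holim L_{n}$.'' But by Mitchell's theorem the tower $\{L_{n}K(\bZ)\}$ is constant for $n\geq 1$, so $\lim^{1}$ vanishes and $\holim L_{n}K(\bZ)\simeq L_{1}K(\bZ)$ on the nose; the same holds for $K(\bZ\phat)$. Thus $F(K(\bZ))$ is simply the fiber of $K(\bZ)_{(p)}\to L_{1}K(\bZ)$, and no Pontryagin-type duals arise from any tower phenomenon.

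The actual source of the duality is an arithmetic input that your outline does not invoke: the $K$-theoretic Tate--Poitou duality of \cite{BM-ktpd} (for $p>2$) and Rognes' explicit description of $\Fib(\trc_{\bZ})\phat[2]$ (for $p=2$). The paper's route is to use the DGM square against $TC$ rather than against $K(\bS\phat)$, obtaining $F(K(\bS))\simeq F(\Fib(\trc_{\bZ}))$ directly, and then to exhibit an explicit Anderson duality pairing between $K(\bZ)\phat$ and $\Sigma^{2}F(\Fib(\trc_{\bZ}))\phat$ coming from the $K(\bZ)$-module structure on the fiber of the trace. For $p>2$ this pairing is checked summand-by-summand against the eigenspectrum decomposition of $L_{K(1)}K(\bZ)$; for $p=2$ it is checked by showing that Rognes' map $\xi$ is Anderson dual to the map $\chi'$ whose fiber is $K(\bZ)\phat[2]$. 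Your pullback $F(K(\bZ))\times_{F(K(\bZ\phat))}F(K(\bS\phat))$ is formally equivalent to this, but computing $F(K(\bZ))$ and the map $F(K(\bZ))\to F(K(\bZ\phat))$ requires exactly the same arithmetic duality statement; the ``collapsing step'' you flag as an obstacle is not a bookkeeping issue but the entire substance of the proof.
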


The case of $\bS_{(p)}$ is described by a fiber sequence.

\begin{thm}\label{main:FSploc}
For $K(\bS_{(p)})$, $F(K(\bS_{(p)}))$ fits in a fiber sequence
\[
F(K(\bS_{(p)}))\to \bigvee_{\ell\neq p}\Sigma^{-2}I_{\bZ/p^{\infty}}K(\bF_{\ell})
\to\Sigma^{-2}I_{\bZ/p^{\infty}}K(\bZ)\to \Sigma \dotsb, 
\]
where the wedge is over primes $\ell\neq p$, and the map $I_{\bZ/p^{\infty}}K(\bF_{\ell})
\to I_{\bZ/p^{\infty}}K(\bZ)$ is Brown-Comenetz dual to the map
$K(\bZ)\to K(\bF_{\ell})$ induced by $\bZ\to \bF_{\ell}$. 
\end{thm}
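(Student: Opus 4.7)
The plan is to leverage Theorem~\ref{main:FS}, which identifies $F(K(\bS))\simeq \Sigma^{-3} I_{\bZ/p^\infty} K(\bZ)$ (writing $F(X) := \Fib(X_{(p)} \to \holim_n L_n X)$), by analyzing the $p$-localization $\bS\to \bS_{(p)}$. Since $(-)_{(p)}$ and $\holim_n L_n$ are exact functors of spectra, so is $F$, and the map on $K$-theory induces a fiber sequence
\[
F(K(\bS)) \to F(K(\bS_{(p)})) \to F(C),\qquad C := \Cof(K(\bS) \to K(\bS_{(p)})).
\]
Thus the problem reduces to computing $F(C)$ and identifying the rotated connecting map with the boundary in the claimed fiber sequence.

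To analyze $C$, I would apply Dundas-Goodwillie-McCarthy: the pullback squares expressing $K(\bS)$ and $K(\bS_{(p)})$ in terms of $K(\bZ)$, $K(\bZ_{(p)})$, and the respective $TC$'s produce, upon taking horizontal cofibers, a pullback presenting $C$ as a fracture of $\Cof(K(\bZ)\to K(\bZ_{(p)}))$ and $\Cof(TC(\bS)\to TC(\bS_{(p)}))$. The classical dévissage-localization sequence identifies $\Cof(K(\bZ)\to K(\bZ_{(p)})) \simeq \Sigma \bigoplus_{\ell\neq p} K(\bF_\ell)$, while Proposition~\ref{prop:fixTCS} together with \cite{BMY1} implies $F$ annihilates the $TC$-side cofiber (by chromatic convergence of $p$-complete $TC$). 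Hence $F(C) \simeq F(\Sigma \bigoplus_{\ell\neq p} K(\bF_\ell))$.

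For each $\ell \neq p$, the Quillen-Mitchell description of $K(\bF_\ell)$ (periodic $p$-local homotopy groups, together with $L_n K(\bF_\ell)\simeq L_1 K(\bF_\ell)$ for $n\geq 1$) makes $F(K(\bF_\ell))$ a direct calculation, yielding $F(K(\bF_\ell))\simeq \Sigma^{-3} I_{\bZ/p^\infty} K(\bF_\ell)$ in parallel with the sphere result. Uniform boundedness above in $\ell$ collapses the resulting (co)product to a wedge, giving $F(\Sigma \bigoplus_{\ell\neq p} K(\bF_\ell)) \simeq \bigvee_{\ell\neq p} \Sigma^{-2} I_{\bZ/p^\infty} K(\bF_\ell)$. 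Rotating the fiber sequence above then produces
\[
F(K(\bS_{(p)})) \to \bigvee_{\ell\neq p} \Sigma^{-2} I_{\bZ/p^\infty} K(\bF_\ell) \to \Sigma^{-2} I_{\bZ/p^\infty} K(\bZ)
\]
as claimed.

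The main obstacle I anticipate is the final identification of the boundary map with the Brown-Comenetz dual of the canonical $K(\bZ) \to \prod_{\ell\neq p} K(\bF_\ell)$. This requires carefully tracing the connecting map of the original fiber sequence through the DGM pullback, the identification of Theorem~\ref{main:FS}, and the computation of $F(K(\bF_\ell))$, and checking that the resulting map agrees with the dual of reduction mod~$\ell$. The hypothesis $p>2$ enters at precisely this step: at $p=2$ the Brown-Comenetz comparison acquires sign/order-two ambiguities (e.g.\ from $H\bZ$-module structure considerations) that obstruct the direct identification with the naive dual and would require a more delicate separate argument.
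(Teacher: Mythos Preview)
Your overall strategy matches the paper's: both derive the fiber sequence from Proposition~\ref{prop:SQseq}/\ref{prop:FKSploc} (the $p$-complete Quillen localization sequence $\bigvee_{\ell\neq p}K(\bF_\ell)\to K(\bS)\to K(\bS_{(p)})$, obtained via DGM from the one for $\bZ\to\bZ_{(p)}$), combine this with Theorem~\ref{main:FS}, and compute $F(K(\bF_\ell))\simeq \Sigma^{-3}I_{\bZ/p^\infty}K(\bF_\ell)$ directly (the paper packages the latter as an Anderson duality pairing on $K(\bF_\ell)\phat,\ \Sigma^2 F(K(\bF_\ell))\phat$, Proposition~\ref{prop:ADFl}). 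You are also right that the substance lies entirely in identifying the connecting map.

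Where your account goes wrong is the diagnosis of the $p>2$ hypothesis. It is not a sign or order-two ambiguity on the $K(\bF_\ell)$ side. The point is that ``Brown--Comenetz dual of $K(\bZ)\to K(\bF_\ell)$'' only makes sense relative to \emph{chosen} duality equivalences for $K(\bZ)$ and $K(\bF_\ell)$, and to compare the map $F(K(\bF_\ell))\to F(\Fib(\trc_\bZ))$ with a dual you need these equivalences to arise from compatible \emph{intrinsic} pairings. For $K(\bF_\ell)$ the pairing is the module multiplication plus an isomorphism $v_\ell\colon \pi_{-1}(L_{K(1)}K(\bF_\ell);\bZ/p^\infty)\cong\bZ/p^\infty$; for $K(\bZ)$, the corresponding statement is that the $K(\bZ)$-module action on $\Fib(\trc_\bZ)$ together with an isomorphism $u\colon \pi_{-1}(L_{K(1)}\Fib(\trc_\bZ);\bZ/p^\infty)\cong\bZ/p^\infty$ gives an Anderson duality pairing. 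That is proved for $p>2$ in~\cite{BM-ktpd} (and is exactly what drives Theorem~\ref{thm:ADFS}), but for $p=2$ it is only Conjecture~\ref{conj:ktpd}: the $p=2$ proof of Theorem~\ref{main:FS} produces the weak equivalence via explicit $ko/ku$ models rather than through the module pairing, and so cannot be fed into the map comparison.

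The technical core you are missing for the map identification is this: since the map $K(\bF_\ell)\phat\to\Fib(\trc_\bZ)\phat$ lives in $K(\bZ)\phat$-modules, the two module-action squares assemble automatically, and the only remaining issue is whether the diagram of ``fundamental class'' isomorphisms
\[
\pi_{-1}(L_{K(1)}K(\bF_\ell);\bZ/p^\infty)\xrightarrow{\ \ }\pi_{-1}(L_{K(1)}\Fib(\trc_\bZ);\bZ/p^\infty)
\]
matches $v_\ell$ with $u$ up to a unit. This is Lemma~\ref{lem:Jtricks}, and its proof is a genuinely arithmetic computation: writing $\ell=\omega r^{p^n}$ with $r$ a topological generator of $U_1\subset(\bZ\phat)^\times$, one shows both composites $\Sigma J\phat\to \Sigma L_{K(1)}K(\bF_\ell)$ and $\Sigma J\phat\to \Sigma L_{K(1)}\Fib(\trc_\bZ)$ have cokernel $\bZ/p^n$ on $\pi_0$. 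Without this lemma you can only conclude that the map is \emph{some} map between the correct objects, not that it is (a unit times) the dual of $K(\bZ)\to K(\bF_\ell)$.
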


In older drafts of this paper, the description of the map in the case
$p=2$ was conjectural, but this case has since been resolved by
Cho~\cite{Cho-ktpd}. See Conjecture~\ref{conj:ktpd} and
Theorem~\ref{thm:FSplocFinal} for a more specific formulation. 
			      
As particular consequences of the theorems above, for $S=\bS$,
$\bS_{(p)}$, or $\bS\phat$, the chromatic completion map
$K(S)_{(p)}\to \holim L_{n}K(S)$ is a connective cover. 

So far all our discussion has been for the chromatic localization
functors $L_{n}$.  However, all previous work on Quillen-Lichtenbaum
conjectures for ring spectra have been based on periodic
homotopy groups and the finite telescopic localization functors
$L^{f}_{n}$ (see, for example, \cite{Miller-FiniteLocalizations} for a
definition). In particular, Rognes' redshift
conjectures~\cite[p.~8]{Rognes-Oberwolfach2000},
\cite[4.1(a)]{Rognes-MSRI}, \cite[6.0.5]{HahnWilson-RedshiftBPn} 
and results in this direction by
Angelini-Knoll~\cite{AngeliniKnoll-Periodicity},
Angelini-Knoll-Quigley~\cite{AngeliniKnollQuigley-Segal},
Ausoni-Rognes~\cite{AusoniRognes},
Carmeli-Schlank-Yanovski~\cite{CSY-AmbiHeight},
Clausen-Mathew-Naumann-Noel~\cite{CMNN-Descent},
Hahn-Wilson~\cite{HahnWilson-RedshiftBPn},
Land-Mathew-Meier-Tamme~\cite{LMMT}, and
Veen~\cite{Veen-DetectingPeriodic} ask or answer the following
question about particular ring spectra $R$:

\begin{ql}[$K$-Theoretic Periodicity Question for $R$]
Does the map 
\[
K(R)\to L_{n}^{f}K(R)
\]
induce an isomorphism on homotopy groups in all
sufficiently high degrees?
\end{ql}

There is an essentially unique natural transformation of
localizations $L_{n}^{f}\to L_{n}$, and Ravenel's telescope
conjecture~\cite[10.5]{Ravenel-Periodic} was the assertion that this is
a weak equivalence.  This holds for $n=1$, but is false for
$n>1$~\cite{Telescope2023}.  As a consequence, the 
relationship between $K$-theory chromatic semiconvergence questions
and the cited redshift work above is not easily described.

Instead, the $K$-Theoretic Periodicity Question for ring spectra $R$
is intimately related to the concept of \emph{telescopic}
semiconvergence:
Say that a spectrum $X$ is telescopically convergent when the
telescopic completion map 
\[
X_{(p)}\to \holim_{n} L^{f}_{n}X
\]
is a weak equivalence and telescopically semiconvergent when it
induces an isomorphism on homotopy groups in high degrees.  
In light of the failure of the telescope conjecture,
Barthels~\cite[5.6]{Barthel-ChromaticConvergence} implies that
telescopic convergence and chromatic convergence are different.
However, many of
the formal properties of telescopic semiconvergence are analogous to
the properties of chromatic semiconvergence; in particular, for a ring
$R$ (as opposed to ring spectrum) or scheme $Z$, the map $K(R)\to
L_{1}^{f}K(R)$ or $K(Z)\to L^{f}_{1}K(Z)$ induces an isomorphism in
homotopy groups in high degrees if and only if $K(R)$ or $K(Z)$ is
telescopically semiconvergent.  
More generally, for a spectrum $X$,
the map $X\to L^{f}_{n}X$ induces an isomorphism on homotopy groups in
high degrees if and only if $X$ is telescopically semiconvergent and
its periodic homotopy groups $T(m)_{*}X$ vanish for all $m>n$ (where
$T(m)=v_{m}^{-1}V(m)$ for some chosen finite type $m$ spectrum
$V(m)$). Moreover, the main result of
Land-Mathew-Meier-Tamme~\cite{LMMT} implies that $T(m)_{*}K(R)$
vanishes for all $m>n$ when $T(m)_{*}R$ vanishes for all $m\geq
n$ or equivalently (see~\cite[2.3]{LMMT}) when
$K(m)_{*}R$ vanishes for all $m\geq n$.  Thus, in the context of
redshift conjectures, the $K$-theoretic Periodicity Question for $R$
is equivalent to telescopic semiconvergence for $K(R)$.  If we ask
instead the question: 

\begin{ql}
Is $K(R)$ telescopically semiconvergent?
\end{ql}

\noindent This then expands the scope of Quillen-Lichtenbaum
conjectures for ring spectra beyond those that vanish above a given
chromatic or telescopic height.  In particular, we can ask this
question for $\bS$ or $MU$.

In the setting of the current paper, the analogue of
Theorem~\ref{main:QL} (and the more general Corollary~\ref{cor:QLDGM}
in Section~\ref{sec:elementary}) hold with telescopic semiconvergence
replacing chromatic semiconvergence by the analogous proof with
$L^{f}_{n}$ replacing $L_{n}$.  We do not, however, have the
telescopic analogue of Corollary~\ref{main:cor} (telescopic
semiconvergence of $K(\bS)$): it is not known whether $TC(\bS)$ or
$TC(\bS;p)\phat$ is telescopically semiconvergent.  Indeed, it is not
known whether the sphere spectrum itself is telescopically semiconvergent.
Because $\bS$ is a wedge summand of $K(\bS)$, telescopic
semiconvergence of $K(\bS)$ would imply telescopic semiconvergence of
$\bS$ and hence telescopic semiconvergence of all finite spectra.
Such a result is currently out of reach; the best we can say is that
the map $\bS_{(p)}\to \holim L^{f}_{n}\bS$ is an inclusion of a wedge
summand.

Returning to the ideas in Waldhausen~\cite{Waldhausen-Chromatic}, the
original Ravenel conjectures~\cite[\S10]{Ravenel-Periodic} motivated
Waldhausen to make the following conjecture:

\begin{conj}[Waldhausen's Chromatic Convergence Conjecture]\label{conj:W1}
\[
K(\bS_{(p)})\overto{\simeq}\holim K(L^{f}_{n} \bS).
\]
\end{conj}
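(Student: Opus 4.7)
The plan is to mirror, in the telescopic setting, the three-step strategy this paper carries out in the chromatic setting. First, I would apply the trace-method reduction: the analog of Theorem~\ref{main:tracemethods} (proved by the same argument with $L^{f}_{n}$ replacing $L_{n}$) shows that because $\pi_{0}\bS_{(p)}=\bZ_{(p)}$, the conjecture is equivalent to the assertion that the map $TC(\bS_{(p)})\to \holim TC(L^{f}_{n}\bS)$ is a weak equivalence, which in turn reduces to the corresponding $p$-complete statement for $TC(-;p)\phat$ plus a rational and prime-to-$p$ analysis. The latter pieces are relatively accessible because $L^{f}_{n}\bS$ agrees with $\bS_{(p)}$ rationally for $n\ge 0$ and, after a Mitchell-type vanishing argument, at each prime $\ell\ne p$.

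Second, I would use the main theorem of Land--Mathew--Meier--Tamme~\cite{LMMT} to identify the $T(m)$-local pieces of $K(L^{f}_{n}\bS)$ with the corresponding pieces of $K(\bS)$ for $m\le n$, and thereby realize $\holim K(L^{f}_{n}\bS)$ as a telescopic completion of $K(\bS)$, in exact analogy with how $\holim L_{n} K(\bS)$ was analyzed in Theorem~\ref{main:FS}. At the level of $TC$, this allows one to express the fiber of the comparison map in terms of the telescopic tower $\{TC(\bS)\to L^{f}_{n} TC(\bS)\}$, reducing the conjecture to a statement about this tower.

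Third, I would invoke the telescopic analog of Proposition~\ref{prop:fixTCS}, namely that $TC(\bS)\phat$ is \emph{telescopically} convergent, and combine it with the previous two steps together with the Brown--Comenetz-style vanishing computations used in Theorems~\ref{main:FSphat}--\ref{main:FSploc}, to conclude that the comparison map of the conjecture has trivial homotopy fiber. This would give the equivalence on the nose, not merely a wedge-summand inclusion after $p$-localizing the target.

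The main obstacle is precisely this third input: telescopic convergence of $TC(\bS)\phat$ is currently out of reach, and this is exactly why the main theorem of this paper settles only for the $p$-localized wedge-summand statement. Because $\bS$ is a retract of $TC(\bS)$, such convergence would imply telescopic convergence of the sphere, and hence of every finite spectrum—a statement of essentially the same difficulty as the telescope conjecture in infinitely many heights. A proof of the conjecture as stated therefore appears to require either a genuine breakthrough on the telescope conjecture or a new mechanism that sidesteps the need for telescopic convergence of $\bS$ itself.
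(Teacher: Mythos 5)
This statement is labeled a \emph{conjecture} in the paper, and the paper does not prove it: what it establishes is only the weaker assertion (Theorem~\ref{main:waldsplit}) that $K(\bS_{(p)})_{(p)}\to \holim K(L^{f}_{n}\bS)_{(p)}$ is the inclusion of a wedge summand. Your proposal is therefore correctly calibrated in its conclusion: you do not claim a proof, and the obstruction you single out --- telescopic (semi)convergence of $TC(\bS)\phat$, equivalently of $\bS$ itself, which would imply telescopic convergence of all finite spectra --- is exactly the one the paper identifies in its discussion preceding Conjecture~\ref{conj:W1}. On that score your assessment matches the authors'.

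That said, two steps of your outlined strategy would not go through as written even granting the missing input. First, your step one reduces $\holim K(L^{f}_{n}\bS)$ to $\holim TC(L^{f}_{n}\bS)$ via a Dundas--Goodwillie--McCarthy argument, but DGM applies only to connective ring spectra (it compares $K(R)$ and $TC(R)$ against $K(\pi_{0}R)$ and $TC(\pi_{0}R)$), and $L^{f}_{n}\bS$ is not connective. The paper's actual mechanism for the partial result is different: it applies trace methods only to $\bS_{(p)}$ itself to get chromatic semiconvergence of $K(\bS_{(p)})$, and then uses the Land--Mathew--Meier--Tamme purity theorem (Theorem~\ref{thm:LMMT}) for the $L^{p,f}_{n}$-equivalences $\bS_{(p)}\to L^{f}_{n}\bS$ to produce a retraction $\holim K(L^{f}_{n}\bS)\phat\to\holim(L_{n}K(\bS_{(p)}))\phat$; the target is the chromatic, not telescopic, completion. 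Second, even in the chromatic setting where convergence of $TC(\bS_{(p)})\phat$ \emph{is} known, the paper explicitly flags (diagram~\eqref{eq:stabbingspree} at the end of Section~\ref{sec:wspf}) that it cannot verify the compatibility needed to upgrade the $TC$ statement to a statement about $\holim K(L_{n}\bS)$. So beyond the telescope-conjecture-level input you name, your route also needs (a) a substitute for DGM over non-connective rings, or a redshift-type identification of $K(L^{f}_{n}\bS)$ with a telescopic localization of $K(\bS_{(p)})$, and (b) the commutativity of the analogous comparison diagram, neither of which is currently available.
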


In fact, Waldhausen's original conjecture concerns the category of
finite spectra with telescopic equivalences, which
(by~\cite[1.6.7]{WaldhausenKT}) gives the ``free'' algebraic
$K$-theory of $L^{f}_{n}\bS$ (the $K$-theory of finite cell complexes
rather than of perfect complexes).  The previous conjecture is
equivalent (by~\cite[1.10.2]{ThomasonTrobaugh}) to Waldhausen's
original conjecture plus an additional conjecture that $K_{0}(\bS)\to
\lim K_{0}(L^{f}_{n} \bS)$ is an isomorphism and $\lim\nolimits^{1}
K_0(L^{f}_{n} \bS)=0$.

If telescopic semiconvergence of $\bS$ fails, it still makes sense to
ask for the map in Conjecture~\ref{conj:W1} to be the inclusion of a
wedge summand.  Combining our chromatic semiconvergence theorem for
$K(\bS_{(p)})$ with the results of
Land-Mathew-Meier-Tamme~\cite{LMMT}, we prove the following theorem in
Section~\ref{sec:wspf}.

\begin{thm}\label{main:waldsplit}
The natural map
\[
K(\bS_{(p)})_{(p)}\to \holim K(L^{f}_{n} \bS)_{(p)}
\]
is the inclusion of a wedge summand.
\end{thm}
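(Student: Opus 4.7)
The plan is to construct an explicit retraction using the chromatic semiconvergence of $K(\bS_{(p)})$ (Corollary~\ref{main:cor}) together with the purity results of Land--Mathew--Meier--Tamme~\cite{LMMT}. Write $A=K(\bS_{(p)})_{(p)}$ and $B=\holim_{n}K(L^{f}_{n}\bS)_{(p)}$, and let $f\colon A\to B$ denote the map to be split. Applying $L_{n}$-localization levelwise to the tower defining $B$ gives a natural map $g\colon B\to\holim_{n}L_{n}K(L^{f}_{n}\bS)$, and the first step is to identify the target with $C:=\holim_{n}L_{n}K(\bS_{(p)})$ using LMMT.

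LMMT's purity theorem shows that, for a connective ring spectrum $R$, the $L^{f}_{n}$-localization $R\to L^{f}_{n}R$ induces an equivalence $L^{f}_{n}K(R)\simeq L^{f}_{n}K(L^{f}_{n}R)$. Applying $L_{n}$-localization to both sides (using that $L_{n}$ factors through $L^{f}_{n}$, since each $K(m)$ is $T(m)$-local) produces an equivalence $L_{n}K(\bS_{(p)})\simeq L_{n}K(L^{f}_{n}\bS)$, naturally in $n$; the homotopy limit then yields the identification $\holim_{n}L_{n}K(L^{f}_{n}\bS)\simeq C$. Under this identification, the composite $g\circ f\colon A\to C$ is the chromatic completion map $c$. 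By Corollary~\ref{main:cor}, $c$ is a $\pi_{\geq 0}$-isomorphism, so (using connectivity of $A$) applying $\tau_{\geq 0}$ produces a weak equivalence $\tau_{\geq 0}c\colon A\xrightarrow{\simeq}\tau_{\geq 0}C$.

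To build the retraction, use the $t$-structure adjunction: since $A$ is connective, precomposition with $\tau_{\geq 0}B\to B$ gives a bijection $[B,A]\cong[\tau_{\geq 0}B,A]$. Let $r\colon B\to A$ be the unique map corresponding to $\tilde{r}:=(\tau_{\geq 0}c)^{-1}\circ\tau_{\geq 0}g\colon \tau_{\geq 0}B\to\tau_{\geq 0}C\xleftarrow{\simeq}A$; equivalently, $\tau_{\geq 0}r=\tilde{r}$. Since $A$ is connective, $f$ factors through $\tau_{\geq 0}B\to B$ via $\tau_{\geq 0}f\colon A\to\tau_{\geq 0}B$, and functoriality of $\tau_{\geq 0}$ yields
\[
r\circ f=\tilde{r}\circ\tau_{\geq 0}f=(\tau_{\geq 0}c)^{-1}\circ\tau_{\geq 0}(g\circ f)=(\tau_{\geq 0}c)^{-1}\circ\tau_{\geq 0}c=\mathrm{id}_{A},
\]
so $r$ retracts $f$ and $A$ is a wedge summand of $B$. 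The main technical obstacle is invoking LMMT to obtain the equivalence $L_{n}K(\bS_{(p)})\simeq L_{n}K(L^{f}_{n}\bS)$; the remainder is a formal consequence of chromatic semiconvergence and the standard $t$-structure.
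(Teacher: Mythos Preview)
Your construction of $g\circ f\simeq c$ via LMMT is the right starting point, but the argument breaks at the ``$t$-structure adjunction'' step. The truncation $\tau_{\geq 0}$ is the \emph{right} adjoint to the inclusion of connective spectra, which gives $[X,Y]\cong[X,\tau_{\geq 0}Y]$ for $X$ connective; it does \emph{not} give $[B,A]\cong[\tau_{\geq 0}B,A]$ for $A$ connective. Extending $\tilde r\colon\tau_{\geq 0}B\to A$ along $\tau_{\geq 0}B\to B$ carries an obstruction in $[\Sigma^{-1}\tau_{\leq -1}B,A]$, and maps from coconnective spectra to connective spectra do not vanish in general. Here $B$, as a homotopy limit of connective spectra, is only $(-2)$-connected: there may be a nontrivial $\lim^{1}$ contribution to $\pi_{-1}B$ (indeed, the vanishing of $\lim^{1}K_{0}(L^{f}_{n}\bS)$ is singled out in the paper as an open question, just after Conjecture~\ref{conj:W1}), so this obstruction group is not obviously zero. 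A secondary issue: Corollary~\ref{main:cor} gives only semiconvergence; the stronger statement that $c$ is an isomorphism on all of $\pi_{\geq 0}$ requires Theorem~\ref{main:FSploc}.

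The paper instead runs the adjunction in the correct direction. One first produces a map $B\to C=\holim L_{n}K(\bS_{(p)})$ whose composite with $f$ is the chromatic completion map; because the paper only invokes LMMT $p$-completely (Corollary~\ref{cor:LMMT}), this requires assembling a $p$-complete piece and a separate rational piece (via $K(\bQ)$), then verifying arithmetic-square compatibility (Theorem~\ref{thm:arith}). Since $B$ is $(-2)$-connected, the right-adjoint property now factors $B\to C$ through $\tau_{\geq -1}C$. For $S=\bS_{(p)}$ one has $\pi_{-1}C\neq 0$, so $\tau_{\geq -1}C\neq A$, and a further argument is needed: the paper traces the induced map on $\pi_{-1}$ through Proposition~\ref{prop:FKSploc} and shows it factors through the connective spectrum $K(L^{f}_{1}\bS)$, hence vanishes, forcing the factorization through $A=\tau_{\geq 0}C$. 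Finally, the composite $A\to B\to A$ is only shown to be a weak equivalence (by checking its $p$-completion and rationalization separately), and one composes with its inverse to obtain the retraction.
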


Land-Mathew-Meier-Tamme~\cite[2.5]{LMMT} formulate a
finite localization functor $L^{p,f}_{n}$ that is a version of
$L^{f}_{n}$ which does not $p$-localize (does not invert the primes
$\ell\neq p$).  Using this localization functor, we also have the
following result for $K(\bS)$.

\begin{thm}\label{main:Swaldsplit}
The natural map
\[
K(\bS)_{(p)}\to \holim K(L^{p,f}_{n} \bS)_{(p)}
\]
is the inclusion of a wedge summand.
\end{thm}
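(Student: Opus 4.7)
I plan to derive Theorem~\ref{main:Swaldsplit} from Theorem~\ref{main:FS} together with the purity theorem of Land--Mathew--Meier--Tamme~\cite{LMMT}, by a three-step argument paralleling the proof of Theorem~\ref{main:waldsplit} in Section~\ref{sec:wspf}.

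The first step is to factor the chromatic completion map for $K(\bS)$ through the map $\alpha\colon K(\bS)_{(p)}\to\holim_{n} K(L^{p,f}_{n}\bS)_{(p)}$ of Theorem~\ref{main:Swaldsplit}. Applying the LMMT purity theorem to the localization $\bS\to L^{p,f}_{n}\bS$ produces, after $p$-localization, a natural comparison $K(L^{p,f}_{n}\bS)_{(p)}\to L_{n}K(\bS)$ fitting into a commutative triangle
\[
\xymatrix{
K(\bS)_{(p)}\ar[r]\ar[dr] & K(L^{p,f}_{n}\bS)_{(p)}\ar[d]\\
& L_{n}K(\bS)
}
\]
whose diagonal is the $L_{n}$-localization map. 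Passing to $\holim_{n}$, the chromatic completion map $\chi\colon K(\bS)_{(p)}\to\holim_{n}L_{n}K(\bS)$ factors as $\chi=\beta\circ\alpha$, where $\beta\colon\holim_{n}K(L^{p,f}_{n}\bS)_{(p)}\to\holim_{n}L_{n}K(\bS)$ is induced by the comparison.

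The second step, which is the technical heart of the argument, is to show that $\chi$ is itself the inclusion of a wedge summand. By Theorem~\ref{main:FS}, the fiber of $\chi$ is $F:=\Sigma^{-3}I_{\bZ/p^{\infty}}K(\bZ)$, a coconnective spectrum with homotopy concentrated in degrees $\leq -3$. Splitting the fiber sequence $F\to K(\bS)_{(p)}\to\holim_{n}L_{n}K(\bS)$ is equivalent to a nullhomotopy of the fiber inclusion $F\to K(\bS)_{(p)}$, i.e., to the vanishing of $[F,K(\bS)_{(p)}]$. Because $\bZ_{(p)}$ has global dimension one, $\mathrm{Ext}^{j}_{\bZ_{(p)}}$ vanishes for $j\geq 2$; consequently, for any $k\leq -3$ and any abelian group $A$, the group $[\Sigma^{k}HA,K(\bS)_{(p)}]=[HA,\Sigma^{-k}K(\bS)_{(p)}]$ is built, along a Postnikov filtration of the $(-k)$-connective target, from $\mathrm{Ext}^{j}$ groups with $j\geq -k\geq 3$, and therefore vanishes. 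An identical calculation controls the $\lim\nolimits^{1}$ terms arising from the Postnikov tower of $F$. This yields $[F,K(\bS)_{(p)}]=0$, and hence a retraction $r\colon\holim_{n}L_{n}K(\bS)\to K(\bS)_{(p)}$ of $\chi$.

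In the third step, since $\chi=\beta\circ\alpha$ is a wedge summand inclusion, the composite $r\circ\beta$ is a retraction of $\alpha$, exhibiting $K(\bS)_{(p)}$ as a wedge summand of $\holim_{n}K(L^{p,f}_{n}\bS)_{(p)}$. The main obstacle is producing and verifying the LMMT comparison of Step~1: one needs the triangle to commute naturally in $n$ so that passage to $\holim_{n}$ realizes the chromatic completion map (which must then be matched with the fiber identification of Theorem~\ref{main:FS}). The $\mathrm{Ext}$-vanishing argument in Step~2 is then formal, using only that $\bZ_{(p)}$ is a PID, and Step~3 is immediate.
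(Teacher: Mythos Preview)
There are genuine gaps in both of your key steps.

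\textbf{Step 1.} The Land--Mathew--Meier--Tamme theorem (Theorem~\ref{thm:LMMT}) only gives an equivalence after $p$-\emph{completion}: $(L_{n}K(\bS))\phat\simeq (L_{n}K(L^{p,f}_{n}\bS))\phat$. It does \emph{not} produce a $p$-local comparison $K(L^{p,f}_{n}\bS)_{(p)}\to L_{n}K(\bS)$ as you claim. Indeed, such termwise equivalences fail rationally: already for $n=0$ one has $L_{0}K(\bS)\simeq K(\bS)_{\bQ}$ with $\pi_{1}(-;\bQ)=0$, whereas $L_{0}K(L^{p,f}_{0}\bS)\simeq K(\bS[1/p])_{\bQ}$ has $\pi_{1}(-;\bQ)\iso\bQ$ (coming from the unit $p$). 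The paper therefore constructs the $p$-complete map~\eqref{eq:psplit} and a separate rational map~\eqref{eq:qsplit}, and then proves (Theorem~\ref{thm:arith}, whose proof occupies all of Section~\ref{sec:arith}) that these glue across the arithmetic square. Only after this does one obtain the map~\eqref{eq:ccintsplit} into $\holim L_{n}K(\bS)$, and even then the composite with $\alpha$ is not literally the chromatic completion map but only agrees with it after $p$-completion and rationalization; the paper then argues separately that the resulting self-map of $K(\bS)_{(p)}$ is a weak equivalence. Your proposal skips this entire rational/arithmetic analysis.

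\textbf{Step 2.} Your $\mathrm{Ext}$-vanishing argument is categorically mistaken. In the stable homotopy category, $[HA,\Sigma^{m}HB]$ is the group of degree-$m$ stable cohomology operations, not $\mathrm{Ext}^{m}_{\bZ_{(p)}}(A,B)$; for instance $[H\bZ,\Sigma^{3}H\bF_{2}]\neq 0$ (it contains $Sq^{3}$). So the Postnikov filtration of $\Sigma^{-k}K(\bS)_{(p)}$ does not reduce $[\Sigma^{k}HA,K(\bS)_{(p)}]$ to $\mathrm{Ext}$ groups over a PID, and there is no reason from your argument that $[F,K(\bS)_{(p)}]=0$. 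The paper avoids this question entirely: rather than splitting the chromatic completion map $\chi$, it builds the retraction of $\alpha$ directly by factoring the map~\eqref{eq:ccintsplit} through the connective cover $K(\bS)_{(p)}\hookrightarrow\holim L_{n}K(\bS)$ (using Theorem~\ref{main:FS} and connectivity of $\holim K(L^{p,f}_{n}\bS)$), and then checks the resulting self-map of $K(\bS)_{(p)}$ is the identity on homotopy groups.
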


At the time Waldhausen was writing, it was widely expected that the
telescope conjecture would hold for all $n$
and~\cite{Waldhausen-Chromatic} assumes this for some its conclusions;
for this reason, the conjecture is sometimes formulated with the
chromatic localizations.

\begin{conj}[Waldhausen's Chromatic Convergence Conjecture, 2nd Version]
\label{conj:W2}
\[
K(\bS_{(p)})\overto{\simeq}\holim K(L_{n} \bS).
\]
\end{conj}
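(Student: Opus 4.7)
The plan is to parallel the strategy behind Theorem~\ref{main:waldsplit}, substituting the chromatic tower $\{L_n \bS\}$ for the telescopic tower $\{L_n^f \bS\}$. First I would exploit the chromatic semiconvergence of $K(\bS)$ (Corollary~\ref{main:cor}) together with the fiber computation of Theorem~\ref{main:FS} to produce a candidate retraction $\holim K(L_n \bS)_{(p)} \to K(\bS_{(p)})_{(p)}$. Just as Theorem~\ref{main:waldsplit} builds on input from~\cite{LMMT}, the chromatic version should rely on chromatic descent: the vanishing $K(m)_* L_n \bS = 0$ for $m > n$, combined with an LMMT-style argument, should force $T(m)$-acyclicity of the appropriate cofibers in the tower and thereby control the homotopy inverse limit.

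Next, I would apply the chromatic fracture square
\[
L_n \bS \htp L_{K(n)} \bS \times_{L_{n-1} L_{K(n)} \bS} L_{n-1} \bS
\]
level-by-level, combined with a Milnor-excision pullback theorem for algebraic $K$-theory, to reduce the study of $K(L_n \bS)$ to $K(L_{n-1}\bS)$ together with the $K$-theory of the $K(n)$-local pieces. Proceeding inductively in $n$ with base case $L_0 \bS = \bS_{\bQ}$, and using the chromatic convergence of $TC(\bS)\phat$ (Proposition~\ref{prop:fixTCS}, cf.~\cite{BMY1}) together with a trace method comparison, one aims to identify $\holim K(L_n \bS)_{(p)}$ with $K(\bS_{(p)})_{(p)}$.

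The main obstacle is twofold. First, upgrading to a full weak equivalence, as Conjecture~\ref{conj:W2} requires, rather than merely a wedge summand inclusion as in Theorem~\ref{main:waldsplit}, demands that any complementary summand vanish; in light of the current expectation that the telescope conjecture fails at heights $n \geq 2$, it is plausible that only the wedge summand analogue of Theorem~\ref{main:waldsplit} can be established for the chromatic tower. Second, even the wedge summand statement requires control over $K(L_{K(n)} \bS)$ for $n \geq 2$, the central object of the various redshift conjectures (cf.~\cite{HahnWilson-RedshiftBPn, CMNN-Descent}), which remains largely inaccessible. Substantial new input on the $K$-theory of $K(n)$-local spectra therefore appears necessary.
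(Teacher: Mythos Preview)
The statement is a \emph{conjecture}; the paper does not prove it. What the paper offers, at the end of Section~\ref{sec:wspf}, is a discussion of what is missing even for the weaker wedge-summand analogue of Theorem~\ref{main:waldsplit} for the chromatic tower. Your proposal should be compared against that discussion rather than against a completed proof.

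Your sketch diverges from the paper's route in a substantive way, and it rests on a step that does not go through. Your first paragraph invokes an ``LMMT-style argument'' for the map $\bS_{(p)}\to L_n\bS$, but Theorem~\ref{thm:LMMT} applies to $L_n^{p,f}$-equivalences, and $\bS_{(p)}\to L_n\bS$ is one only if the height-$n$ telescope conjecture holds; this is precisely the obstruction the paper identifies. Your second paragraph then proposes chromatic fracture plus Milnor excision and an induction requiring control of $K(L_{K(n)}\bS)$ for $n\geq 2$. The paper's approach avoids this entirely: most of the argument of Sections~\ref{sec:wspf}--\ref{sec:arith} already works with $L_n$ in place of $L_n^f$, and in lieu of LMMT the paper proposes to build the $p$-complete map~\eqref{eq:cpsplit} via trace methods, using the weak equivalence $TC(\bS_{(p)})\phat\simeq\holim TC(L_n\bS_{(p)})\phat$ from~\cite{BMY1}, the (known) $n=1$ telescope conjecture, and the Dundas--Goodwillie--McCarthy square. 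The remaining obstacle the paper isolates is not redshift-level input about $K$-theory of $K(n)$-local ring spectra but the concrete problem of verifying that diagram~\eqref{eq:stabbingspree} commutes (for $p>2$ this is all that remains). So your assessment that ``control over $K(L_{K(n)}\bS)$ \dots\ appears necessary'' is off the mark: the paper's strategy bypasses that question altogether.
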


For the chromatic localizations $L_{n}$, the analogue of
Theorem~\ref{main:waldsplit} is a slightly weaker version of
Conjecture~\ref{conj:W2} that seems approachable and still retains much
of the power of the original.  We discuss the current obstacle to a
complete proof at the end of Section~\ref{sec:wspf}.

\subsection*{Conventions}
Throughout this paper the unmodified term ``ring spectrum'' and the
term ``commutative ring spectrum'' 
mean $A_{\infty}$ and $E_{\infty}$ ring spectrum or (equivalently)
$\bS$-algebra and commutative $\bS$-algebra.

\subsection*{Acknowledgments}
The authors thank Ben Antieau, Prasit Bhattacharya, Jeremy Hahn, Mike
Hopkins, and John Rognes for helpful conversations, and Myungsin Cho
for corrections. The final draft benefited from corrections and many
helpful suggestions from an anonymous referee.

%%%%%%%%%%%%%%%%%%%%%%%%%%%%%%%%%%%%%%%%%%%%%%%%%%%%%%%%%%%%%%%%%%%%%%
\section{Basic properties of chromatic semiconvergence and proof of
Theorem~\ref{main:tracemethods}}
\label{sec:elementary}

In this section, we review some easy generalities about chromatic
convergence and chromatic semiconvergence.
Theorem~\ref{main:tracemethods} follows from these properties, the
resolved Quillen-Lichtenbaum conjecture, and the
work of Mitchell~\cite{Mitchell-MoravaKofAlgK},
Dundas-Goodwillie-McCarthy~\cite{GoodwillieHN,Dundas-RelK,McCarthy},
and Hesselholt-Madsen~\cite{HM2} on $K$-theory of rings and ring
spectra.

Because localizations and homotopy limits commute with fiber
sequences, cofiber sequences, and suspensions, so does chromatic
convergence.  This leads to the following observation about chromatic
semiconvergence. (See Notation~\ref{notn:F} for $F$.)

\begin{prop}\label{prop1}
If $W\to X\to Y\to \Sigma \dotsb$ is a fiber sequence of spectra then
so is 
\[
\holim L_{n}W\to \holim L_{n}X\to \holim L_{n}Y\to \Sigma \dotsb
\]
and so is
\[
F(W)\to F(X)\to F(Y)\to \Sigma \dotsb.
\]
In particular, if $W,Y$ are chromatically $N$-semiconvergent, then so
is $X$. 
\end{prop}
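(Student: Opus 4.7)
The first assertion is really a compilation of two standard preservation facts, so the plan is just to cite them in the right order. Namely, each functor $L_n$ is a Bousfield localization, hence an exact functor of spectra, and so carries the fiber sequence $W\to X\to Y$ to a fiber sequence $L_nW\to L_nX\to L_nY$. These fiber sequences assemble into a fiber sequence of $\bN^{\op}$-shaped diagrams of spectra, and since $\holim$ is a right adjoint (up to the usual homotopical considerations; equivalently, because a tower of fiber sequences gives rise to a fiber sequence on inverse limits), applying $\holim_n$ produces the asserted fiber sequence $\holim L_nW\to \holim L_nX\to \holim L_nY$.

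For the ``in particular'' clause, I would combine the displayed fiber sequence with the corresponding $p$-local fiber sequence $W_{(p)}\to X_{(p)}\to Y_{(p)}$ (using that $(-)_{(p)}$ is likewise exact). The naturality of the chromatic completion map gives a map from the latter fiber sequence to the former, and so the homotopy fibers $F(W)$, $F(X)$, $F(Y)$ of the three chromatic completion maps themselves sit in a fiber sequence
\[
F(W)\to F(X)\to F(Y)\to \Sigma F(W).
\]

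From the associated long exact sequence of homotopy groups, if $\pi_q F(W)=0$ and $\pi_q F(Y)=0$ for all $q>N$, then $\pi_q F(X)=0$ for all $q>N$, which is precisely the statement that $X$ is chromatically $N$-semiconvergent.

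There is no genuine obstacle here; the only thing to be a little careful about is ensuring the homotopy limit is taken in a way compatible with fiber sequences, but this is immediate in any reasonable model (e.g.\ in the stable $\infty$-category of spectra, $\holim$ is a limit and hence preserves fiber sequences automatically).
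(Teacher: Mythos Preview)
Your argument is correct and matches the paper's approach exactly: the paper does not give a formal proof but simply remarks that localizations and homotopy limits commute with fiber sequences, and states the proposition as an immediate consequence. Your write-up just fills in the routine details (exactness of $L_n$ and $(-)_{(p)}$, preservation of fiber sequences under $\holim$, and the long exact sequence on fibers) that the paper leaves implicit.
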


We used the following proposition in the introduction in the example
of algebraic $K$-theory spectra, and we record it here for
convenience.  

\begin{prop}\label{prop2}
Let $X$ be a spectrum and assume that there exists $m\geq 0$ such that
$K(n)_{*}X=0$ for all $n>m$.  Then the natural map $\holim L_{n}X\to
L_{m}X$ is a weak equivalence.
\end{prop}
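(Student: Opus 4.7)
The plan is to show that under the hypothesis $K(n)_{*}X=0$ for all $n>m$, the tower $\{L_{n}X\}_{n\geq m}$ is essentially constant, so that the natural map $\holim L_{n}X\to L_{m}X$ is automatically a weak equivalence. Thus it suffices to prove that each transition map $L_{n+1}X\to L_{n}X$ is a weak equivalence for $n\geq m$.

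To achieve this, I would invoke the Hopkins--Ravenel chromatic fracture square
\[
\xymatrix{
L_{n+1}X \ar[r] \ar[d] & L_{K(n+1)}X \ar[d] \\
L_{n}X \ar[r] & L_{n}L_{K(n+1)}X
}
\]
which is a homotopy pullback. From this square, $L_{n+1}X\to L_{n}X$ is a weak equivalence as soon as $L_{K(n+1)}X\simeq \ast$ (since then both right-hand terms are contractible).

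The remaining step is the standard observation that $K(n+1)_{*}X=0$ implies $L_{K(n+1)}X\simeq \ast$: the localization map $X\to L_{K(n+1)}X$ is a $K(n+1)$-equivalence, so $K(n+1)_{*}L_{K(n+1)}X=0$; but a $K(n+1)$-local spectrum with vanishing $K(n+1)$-homology is contractible (by the defining universal property of $K(n+1)$-localization applied to the map to zero). Applying this to each $n\geq m$ using the hypothesis gives the chain of equivalences $L_{m}X\overfrom{\simeq}L_{m+1}X\overfrom{\simeq}L_{m+2}X\overfrom{\simeq}\dotsb$, from which the conclusion on $\holim L_{n}X$ follows immediately.

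There is no substantive obstacle: the argument is entirely formal once one has the chromatic fracture square and the triviality criterion for $K(n)$-local spectra, both of which are standard. The only point requiring any care is ensuring the hypothesis is applied at every level $n>m$, not merely at $n=m+1$, so that the tower is constant above level $m$ rather than merely stable at one step.
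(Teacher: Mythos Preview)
Your argument is correct and is the standard one: the chromatic fracture square reduces the claim to showing $L_{K(n+1)}X\simeq *$ whenever $K(n+1)_{*}X=0$, which follows immediately from the universal property of localization. The paper does not supply its own proof of this proposition, recording it instead as an elementary fact ``for convenience''; your argument fills in exactly the expected details.
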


Because $N$-truncated spectra are $K(n)$-acyclic for all $n>0$, the
previous propositions then imply the following one.

\begin{prop}\label{prop3}
Let $X$ be a spectrum and assume that there exists $m\geq 0$ and $N\in
\bZ$ such that the localization map $X_{(p)}\to L_{m}X$ is $N$-truncated.
Then $F(X)$ is weakly equivalent to the
homotopy fiber of $X_{(p)}\to L_{m}X$ and $X$ is chromatically
$N$-semiconvergent.  In particular, if $X$ is
$L_{m}$-local, then $X$ is chromatically convergent.
\end{prop}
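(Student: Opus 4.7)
The plan is to apply Propositions~\ref{prop1} and~\ref{prop2} to the fiber sequence
\[
F \to X_{(p)} \to L_{m}X,
\]
where $F$ denotes the fiber of $X_{(p)} \to L_{m}X$. By hypothesis $F$ is $N$-truncated, and as the fiber of an $E(m)$-localization it is also $E(m)$-acyclic, so $K(k)_{*}F = 0$ for all $k \leq m$. Applying Proposition~\ref{prop1} to this fiber sequence yields a fiber sequence of chromatic completions
\[
\holim_{n} L_{n}F \to \holim_{n} L_{n}X \to \holim_{n} L_{n}L_{m}X.
\]

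Next, I would identify the outer two terms. For the right-hand term, $L_{m}X$ is $E(m)$-local and therefore $E(n)$-local for every $n \geq m$, so the tower $\{L_{n}L_{m}X\}$ is eventually constant at $L_{m}X$ and the canonical map $L_{m}X \to \holim_{n} L_{n}L_{m}X$ is an equivalence; in particular, $L_{m}X$ itself is chromatically convergent. For the left-hand term, the remark immediately preceding the proposition asserts that $N$-truncated spectra are $K(k)$-acyclic for $k > 0$; combined with the $E(m)$-acyclicity of $F$ (which in particular gives $K(0)_{*}F = 0$), this forces $K(k)_{*}F = 0$ for every $k \geq 0$. Proposition~\ref{prop2} applied with the bound $m = 0$ then yields $\holim_{n} L_{n}F \simeq L_{0}F = 0$, where the last equality uses that $F$ is rationally trivial.

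Assembling these identifications, the fiber sequence collapses to an equivalence $\holim_{n} L_{n}X \simeq L_{m}X$, under which the chromatic completion map $X_{(p)} \to \holim_{n} L_{n}X$ is identified with the $p$-localization of $X \to L_{m}X$; the fiber of this map is $F$, which is $N$-truncated, so $X$ is chromatically $N$-semiconvergent. For the ``in particular'' clause, if $X$ is itself $L_{m}$-local then $F = 0$, so the fiber of the chromatic completion is contractible and $X$ is chromatically convergent. The only non-formal ingredient, and therefore the main place requiring care, is the invocation of $K(k)$-acyclicity of $N$-truncated spectra used to force $\holim_{n} L_{n}F = 0$; everything else is a direct assembly of Propositions~\ref{prop1} and~\ref{prop2} with the idempotence of the localization tower at an $L_{m}$-local spectrum.
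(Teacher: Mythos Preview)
Your proof is correct and follows exactly the route the paper intends: the paper states Proposition~\ref{prop3} with only the one-line justification ``Because $N$-truncated spectra are $K(n)$-acyclic for all $n>0$, the previous propositions then imply the following one,'' and your argument is precisely the unpacking of that sentence via Propositions~\ref{prop1} and~\ref{prop2} applied to the fiber sequence $F\to X_{(p)}\to L_{m}X$.
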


The cofiber of the map from the localization to the completion
$X_{(p)}\to X\phat$ is always rational (and hence $L_{0}$-local).
Combining Propositions~\ref{prop1} and~\ref{prop2}, we get the
following result. 

\begin{prop}\label{prop4}
For any spectrum $X$ the square 
\[
\xymatrix{%
X_{(p)}\ar[r]\ar[d]&\holim L_{n}X\ar[d]\\
X\phat\ar[r]&\holim L_{n}(X\phat)
}
\]
is homotopy cartesian. In particular, the natural map
\[
F(X)\to F(X\phat)
\]
is a weak equivalence and 
a spectrum $X$ is
chromatically $N$-semiconvergent if and only if 
$X\phat$ is chromatically $N$-semiconvergent.
\end{prop}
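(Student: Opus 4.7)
The plan is to use the hint preceding the proposition: the cofiber of $X_{(p)}\to X\phat$ is always rational. Let $F$ denote the fiber of this map, which is then also rational (being a desuspension of the cofiber). Two small observations feed into the argument. First, every rational spectrum is $L_{n}$-local for all $n\geq 0$: the unit map $\bS\to E(n)$ exhibits $H\bQ$ as a retract of $H\bQ\wedge E(n)$, so every $E(n)_{*}$-acyclic spectrum is also $H\bQ$-acyclic. In particular $L_{n}F\simeq F$ and hence $\holim L_{n}F\simeq F$. Second, the $p$-localization map $X\to X_{(p)}$ is an $E(n)_{*}$-equivalence, since all primes $\ell\neq p$ are already invertible on $E(n)$, so $L_{n}X\simeq L_{n}X_{(p)}$ canonically and the top-right entry of the square may be identified with $\holim L_{n}X_{(p)}$.

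Applying Proposition~\ref{prop1} to the fiber sequence $F\to X_{(p)}\to X\phat$ yields a fiber sequence
\[
\holim L_{n}F\to \holim L_{n}X_{(p)}\to \holim L_{n}X\phat,
\]
which by the observations above rewrites as $F\to\holim L_{n}X\to\holim L_{n}X\phat$. The chromatic completion maps provide a map of fiber sequences out of $F\to X_{(p)}\to X\phat$, in which the leftmost vertical map is the equivalence $F\simeq \holim L_{n}F$. The right-hand square of this map of fiber sequences is precisely the square in the proposition, and because the induced map on fibers is an equivalence, the square is homotopy cartesian. For the ``in particular'' clause, a homotopy cartesian square makes the fibers of the two horizontal maps equivalent, so the fiber of the chromatic completion map for $X$ is $N$-truncated exactly when the corresponding fiber for $X\phat$ is.

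The argument is essentially formal once the two preliminary observations are in hand. The only nontrivial input is the rationality of $\mathrm{fib}(X_{(p)}\to X\phat)$, which is standard (it follows, for example, by inverting $p$ on both sides of the Hasse arithmetic fracture square applied to $X_{(p)}$), so there is no real obstacle.
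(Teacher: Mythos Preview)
Your proof is correct and follows essentially the same line as the paper's: both use that the (co)fiber of $X_{(p)}\to X\phat$ is rational and hence fixed by chromatic completion, then apply Proposition~\ref{prop1} to the resulting fiber sequence. The only cosmetic difference is that the paper invokes Proposition~\ref{prop2} (with $m=0$) to see that chromatic completion is the identity on the rational cofiber, whereas you argue directly that rational spectra are $L_{n}$-local; these amount to the same thing.
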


We now apply the previous observations to $K$-theory.  For any
connective ring spectrum $R$, Celebrated work of
Dundas-Goodwillie-McCarthy~\cite{Dundas-RelK, GoodwillieHN, McCarthy}
(see also~\cite[VII.2.2.1]{DundasGoodwillieMcCarthy}) gives a homotopy
cartesian square
\[
\xymatrix{%
K(R)\ar[r]\ar[d]&K(\pi_{0}R)\ar[d]\\TC(R)\ar[r]&TC(\pi_{0}R)
}
\]
where $TC$ denotes Goodwillie's integral $TC$~\cite{Goodwillie-MSRI}
(see also~\cite[VI.3.3.1]{DundasGoodwillieMcCarthy}), the horizontal
maps are the linearization maps (induced by the map of ring
spectra $R\to H\pi_{0}R$) and the vertical maps are the cyclotomic
trace maps. Applying chromatic
completion $\holim L_{n}(-)$, we get a homotopy cartesian square
\[
\xymatrix{%
\holim L_{n}K(R)\ar[r]\ar[d]&\holim L_{n}K(\pi_{0}R)\ar[d]\\\holim
L_{n}TC(R)\ar[r]&\holim L_{n}TC(\pi_{0}R).
}
\]
Because $K(\pi_{0}R)$ and $TC(\pi_{0}R)$ are $K(\bZ)$-modules and
$K(n)_{*}K(\bZ)=0$ for $n>1$ by the work of
Mitchell~\cite[Thm~A]{Mitchell-MoravaKofAlgK}, we have that 
\[
K(n)_{*}K(\pi_{0}R)=0 \qquad \text{and}\qquad K(n)_{*}TC(\pi_{0}R)=0
\]
for $n>1$.  By Proposition~\ref{prop2}, we have that the maps
\[
\holim L_{n}K(\pi_{0}R)\to L_{1}K(\pi_{0}R)
\qquad \text{and}\qquad
\holim L_{n}TC(\pi_{0}R)\to L_{1}TC(\pi_{0}R)
\]
are weak equivalences.  This proves the following theorem.

\begin{thm}\label{thm:L1DGM}
Let $R$ be a connective ring spectrum.  Then the diagram
induced by the trace and linearization maps
\[
\xymatrix{%
\holim L_{n}K(R)\ar[r]\ar[d]&L_{1}K(\pi_{0}R)\ar[d]\\\holim
L_{n}TC(R)\ar[r]&L_{1}TC(\pi_{0}R)
}
\]
is homotopy cartesian.
\end{thm}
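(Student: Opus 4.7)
The plan is to apply the functor $\holim_{n}L_{n}(-)$ to the Dundas-Goodwillie-McCarthy cartesian square and then to collapse the right-hand column to $L_{1}$ of the linearization terms using Mitchell's vanishing theorem.

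First I would recall the DGM homotopy cartesian square
\[
\xymatrix{
K(R)\ar[r]\ar[d] & K(\pi_{0}R)\ar[d]\\
TC(R)\ar[r] & TC(\pi_{0}R),
}
\]
whose horizontal maps are induced by the linearization $R\to H\pi_{0}R$ and whose vertical maps are the cyclotomic trace. Each chromatic localization $L_{n}$ preserves fiber sequences, and homotopy limits do as well; applying Proposition~\ref{prop1} to either the vertical or the horizontal fiber of this square immediately yields that the induced square
\[
\xymatrix{
\holim L_{n}K(R)\ar[r]\ar[d] & \holim L_{n}K(\pi_{0}R)\ar[d]\\
\holim L_{n}TC(R)\ar[r] & \holim L_{n}TC(\pi_{0}R)
}
\]
is again homotopy cartesian.

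It then remains to identify the two right-hand terms with $L_{1}K(\pi_{0}R)$ and $L_{1}TC(\pi_{0}R)$. The unique unital ring map $\bZ\to\pi_{0}R$ factors through the center, so it induces $K(\bZ)$-module structures on $K(\pi_{0}R)$ and, via the cyclotomic trace $K(\pi_{0}R)\to TC(\pi_{0}R)$, on $TC(\pi_{0}R)$ as well. Any module $M$ over a ring spectrum $A$ is a retract of $A\sma M$ via the unit, so the vanishing of $K(n)_{*}A$ forces the vanishing of $K(n)_{*}M$. Combined with Mitchell's theorem that $K(n)_{*}K(\bZ)=0$ for $n\geq 2$, this gives $K(n)_{*}K(\pi_{0}R)=0$ and $K(n)_{*}TC(\pi_{0}R)=0$ for $n>1$. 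Proposition~\ref{prop2} then identifies $\holim L_{n}K(\pi_{0}R)\htp L_{1}K(\pi_{0}R)$ and $\holim L_{n}TC(\pi_{0}R)\htp L_{1}TC(\pi_{0}R)$, and substituting these weak equivalences into the square above yields the statement.

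No step presents a serious obstacle; the argument is an assembly of the DGM cartesian square, Mitchell's vanishing theorem, and the two general propositions on chromatic semiconvergence already proved. The only point deserving careful mention is why the retract-of-$A\sma M$ argument genuinely applies to $TC(\pi_{0}R)$ when $R$ (and hence $\pi_{0}R$) is not assumed commutative; this is handled by routing the $K(\bZ)$-action through the center of $\pi_{0}R$, so that the trace sends the central $K(\bZ)$-module structure on $K(\pi_{0}R)$ to a compatible one on $TC(\pi_{0}R)$.
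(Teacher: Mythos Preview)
Your proposal is correct and follows essentially the same argument as the paper: apply $\holim_{n}L_{n}$ to the Dundas--Goodwillie--McCarthy square, then use Mitchell's vanishing $K(n)_{*}K(\bZ)=0$ for $n>1$ together with Proposition~\ref{prop2} to collapse the right-hand column to $L_{1}$. Your added remarks about routing the $K(\bZ)$-action through the center and the retract argument for $K(n)$-acyclicity of modules are exactly the justifications the paper leaves implicit when it simply asserts that $K(\pi_{0}R)$ and $TC(\pi_{0}R)$ are $K(\bZ)$-modules.
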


Combining with the elementary properties of chromatic semiconvergence
in the propositions above, we get the following corollary.  In it
$TC(R;p)$ denotes $p$-typical $TC$ and we are using the well-known
result that the canonical map $TC(R)\phat\to TC(R;p)\phat$ is a weak
equivalence~\cite[14.1.(ii)]{Goodwillie-MSRI}, \cite[2.2]{BMY1}. 

\begin{cor}\label{cor:QLDGM}
Let $R$ be a connective ring spectrum and assume that $K(\pi_{0}R)$
and $TC(\pi_{0}R)$ are chromatically semiconvergent.  Then $K(R)$ is
chromatically semiconvergent if and only if $TC(R)$ is chromatically
semiconvergent if and only if $TC(R;p)\phat$ is chromatically semiconvergent.
\end{cor}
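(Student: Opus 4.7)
The plan is to deduce the corollary from the Dundas--Goodwillie--McCarthy cartesian square
\[
\xymatrix{
K(R)\ar[r]\ar[d]&K(\pi_0 R)\ar[d]\\TC(R)\ar[r]&TC(\pi_{0}R)
}
\]
together with a two-out-of-three property for chromatic semiconvergence along fiber sequences. To set this up, I would first observe that the \emph{chromatic completion fiber} functor $F(-):=\mathrm{fib}\bigl((-)_{(p)}\to \holim_{n}L_{n}(-)\bigr)$ takes fiber sequences to fiber sequences: the natural transformation $(-)_{(p)}\to\holim_{n}L_{n}(-)$ is a map of functors, each of which preserves fiber sequences (the right-hand one by Proposition~\ref{prop1}), so their vertical fibers assemble into a fiber sequence. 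Consequently, for any fiber sequence $W\to X\to Y$, the long exact sequence associated to $F(W)\to F(X)\to F(Y)$ shows that if two of the three spectra are chromatically semiconvergent then so is the third.

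Next I would apply this two-out-of-three principle to each row of the DGM square. Since the square is cartesian, the horizontal fibers in the two rows agree, giving a common spectrum
\[
F_R \;:=\; \mathrm{fib}\bigl(K(R)\to K(\pi_0 R)\bigr) \;\simeq\; \mathrm{fib}\bigl(TC(R)\to TC(\pi_0 R)\bigr).
\]
The top row together with the hypothesis that $K(\pi_0 R)$ is chromatically semiconvergent yields: $K(R)$ is chromatically semiconvergent iff $F_R$ is. The bottom row together with the hypothesis that $TC(\pi_0 R)$ is chromatically semiconvergent yields: $TC(R)$ is chromatically semiconvergent iff $F_R$ is. Chaining these gives the equivalence of conditions (i) and (ii).

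Finally, for the equivalence with $TC(R;p)\phat$, I would invoke Proposition~\ref{prop4}, which identifies chromatic semiconvergence of a spectrum with that of its $p$-completion, applied to $X=TC(R)$; combined with the cited weak equivalence $TC(R)\phat\simeq TC(R;p)\phat$, this shows $TC(R)$ is chromatically semiconvergent iff $TC(R;p)\phat$ is.

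There is no serious obstacle: once the two-out-of-three property is extracted from Proposition~\ref{prop1}, everything is an immediate diagrammatic consequence of the preparatory propositions. The substantive point is recognizing that the DGM square reduces the chromatic semiconvergence question for $K(R)$ to the corresponding question for $TC(R)$, modulo the $\pi_0 R$ contribution that the hypothesis on $K(\pi_0 R)$ and $TC(\pi_0 R)$ is designed to absorb.
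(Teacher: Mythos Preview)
Your proof is correct and follows essentially the same route as the paper: both hinge on the Dundas--Goodwillie--McCarthy cartesian square and the preservation of fiber sequences under chromatic completion (Proposition~\ref{prop1}), together with Proposition~\ref{prop4} and the equivalence $TC(R)\phat\simeq TC(R;p)\phat$ for the last step. The only cosmetic difference is that the paper first records the sharper Theorem~\ref{thm:L1DGM} (using Mitchell's result to replace $\holim L_n$ by $L_1$ on the $\pi_0 R$ column) and then deduces the corollary, whereas you work directly from the DGM square without that intermediate refinement; your argument is if anything slightly more economical, since Mitchell's vanishing is not actually needed for the semiconvergence equivalence once $K(\pi_0 R)$ and $TC(\pi_0 R)$ are assumed chromatically semiconvergent.
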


We note that the requirement that $K(\pi_{0}R)$ is chromatically
semiconvergent is equivalent to requiring that the localization map 
\[
K(\pi_{0}R)\phat\to (L_{1}K(\pi_{0}R))\phat \simeq L_{K(1)}K(\pi_{0}R)
\]
be $N$-truncated for some $N$; this is precisely the requirement that
the Quillen-Lichtenbaum conjecture hold for $\pi_{0}R$ (as
reformulated by Waldhausen).  Work of Hesselholt-Madsen~\cite[Thm~D,
Add~6.2]{HM2} shows that under finite (module) generation hypotheses, the
connective cover of $TC(\pi_{0}R)\phat$ is equivalent to
$K((\pi_{0}R)\phat)\phat$.  In such cases, the requirement that 
$TC(\pi_{0}R)$ is chromatically semiconvergent is also a
Quillen-Lichtenbaum conjecture requirement.

Theorem~\ref{main:tracemethods} is an immediate consequence of the
previous corollary since the Quillen-Lichtenbaum conjecture holds for
$\bZ$, $\bZ_{(p)}$, and $\bZ\phat$, and therefore the algebraic
$K$-theory and $TC$ of these rings are chromatically semiconvergent.

Corollary~\ref{main:cor} follows from Theorem~\ref{main:tracemethods}
and the following proposition, which is essentially~\cite[1.3]{BMY1}.

\begin{prop}\label{prop:fixTCS}
Let $S=\bS$, $\bS_{(p)}$, or $\bS\phat$.  Then $TC(S;p)\phat$ is
chromatically convergent.
\end{prop}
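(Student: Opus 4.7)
The plan is to reduce the three cases $S\in\{\bS,\bS_{(p)},\bS\phat\}$ to the single case $S=\bS$ and then compute $TC(\bS;p)\phat$ explicitly using B\"okstedt--Hsiang--Madsen.

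First, I would show that $TC(S;p)\phat$ is weakly equivalent for all three choices of $S$. The canonical maps $\bS\to\bS_{(p)}\to\bS\phat$ are $p$-adic equivalences of connective ring spectra, and for connective ring spectra $R$, the $p$-completion $TC(R;p)\phat$ depends only on $R\phat$ up to weak equivalence: this is because $THH$ and the cyclotomic structure maps commute with $p$-completion for connective inputs, so the fiber computation defining $TC(-;p)\phat$ is invariant under $p$-adic equivalence of the input. Hence it suffices to treat $S=\bS$.

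Next, for $S=\bS$, I would invoke the classical B\"okstedt--Hsiang--Madsen splitting
\[
TC(\bS;p)\phat \htp \bS\phat \vee \Sigma \bCPim\phat,
\]
where $\bCPim$ is the Thom spectrum $(\bC P^{\infty})^{-L}$. Since chromatic convergence is additive on wedge sums, it suffices to verify it for each summand. Chromatic convergence of $\bS\phat$ follows by applying Proposition~\ref{prop4} to the Hopkins--Ravenel chromatic convergence theorem for the finite spectrum $\bS$. For $\Sigma\bCPim\phat$, I would write $\bCPim=\hocolim_{n} F_{n}$ as a sequential homotopy colimit of finite Thom spectra $F_{n}=(\bC P^{n+1})^{-L}$ with cofibers $F_{n}/F_{n-1}\htp \Sigma^{2n-2}\bS$, apply Hopkins--Ravenel chromatic convergence to each $F_{n}$, and then transfer the conclusion to the colimit $\bCPim$ via a Milnor exact sequence argument controlling the fibers of the chromatic completion map across the skeletal filtration.

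The main obstacle will be this last step: interchanging $\holim_{m} L_{m}$ with $\hocolim_{n}$ is not formal, since the Hopkins--Ravenel connectivity estimates on the fibers $C_{m} F_{n}$ of $F_{n}\to L_{m}F_{n}$ degrade as the top cell dimension $2n$ grows without bound, so one cannot simply commute the two operations. Making the argument rigorous requires a careful degreewise analysis of the long exact sequences in $\pi_{*}$ associated to the skeletal cofiber filtration of $\bCPim$, combined with vanishing of the resulting $\lim^{1}$ groups, the latter reduced to Hopkins--Ravenel data for sphere spectra shifted into unbounded degrees.
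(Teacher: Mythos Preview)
Your overall architecture matches the paper's: use the B\"okstedt--Hsiang--Madsen description of $TC(S;p)\phat$ to reduce to showing chromatic convergence of $\bS\phat$ and of the non-finite summand. Your reduction of the three cases of $S$ to one is fine; the paper simply invokes BHM uniformly for all three, obtaining $TC(S;p)\phat\simeq \bS\phat\vee X\phat$ with $X$ the fiber of the $\bT$-transfer $\Sigma\Sigma^{\infty}_{+}B\bT\to\bS$, which is your $\Sigma\bCPim$ in different packaging.

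The substantive difference is in the non-finite summand. You try to prove chromatic convergence of $\bCPim$ directly via its cellular filtration and a $\lim^{1}$ argument, and you correctly flag that interchanging $\holim_{m}L_{m}$ with the skeletal colimit is the crux; but your final paragraph is a description of what would need to be done rather than an argument that it can be done. The Hopkins--Ravenel connectivity bound for $C_{m}F_{n}$ really does degrade as $n\to\infty$, so a naive Milnor-sequence pass fails, and the ``careful degreewise analysis'' you allude to is essentially the content of Barthel's extension of Hopkins--Ravenel to spectra whose $BP_{*}$-homology has finite projective dimension. The paper sidesteps all of this by working with $\Sigma^{\infty}_{+}B\bT$ rather than $\bCPim$: since $BP_{*}(B\bT_{+})$ is free over $BP_{*}$, Barthel's criterion (via~\cite[4.2]{BMY1}) gives chromatic convergence of $\Sigma^{\infty}_{+}B\bT$ outright, and then the fiber sequence with $\bS$ finishes the argument. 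I would recommend you replace your filtration argument with this citation; otherwise you are reproving a special case of Barthel's theorem, and your sketch does not yet do so.
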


\begin{proof}
In all three cases, we have by~\cite{BHM} that $TC(S;p)\phat$ is
$\bS\phat\vee X\phat$ where $X$ is the fiber of the $\bT$-transfer
$\Sigma \Sigma^{\infty}_{+}B\bT\to \bS$.  We have that $\bS$ is
chromatically convergent by the Hopkins-Ravenel chromatic convergence
theorem, and $\Sigma^{\infty}_{+}B \bT$ is chromatically convergent
by~\cite[4.2]{BMY1}.
\end{proof}

The proof of the Hopkins-Ravenel chromatic convergence theorem (and
Barthel's elaboration~\cite[3.8]{Barthel-ChromaticConvergence} used
via~\cite{BMY1} in the argument above) proves
more than a weak equivalence; it actually proves pro-isomorphisms on
each homotopy group.  Specifically, for
fixed $q\in \bZ$, the map of towers in $n$
\[
\{\pi_{q}(TC(\bS;p)\phat)\}\to \{ \pi_{q}(L_{n}(TC(\bS;p)\phat)\}
\]
is a pro-isomorphism, where on the left we have the constant tower.  In
particular, when $S=\bS$, $\bS_{(p)}$, or $\bS\phat$, then for each
fixed $q$, the towers $\{\pi_{q}(L_{n}(TC(S;p)\phat))\}$ and 
therefore $\{\pi_{q}(L_{n}TC(S))\}$ are pro-constant.  Since for $Z=\pi_{0}S$,
the towers $\{\pi_{q}(L_{n}K(Z))\}$ and $\{\pi_{q}(L_{n}TC(Z))\}$ are
actually constant (for $n\geq 1$), we get the following proposition.  

\begin{prop}\label{prop:proconst}
Let $S=\bS$, $\bS_{(p)}$, or $\bS\phat$, and fix $q\in \bZ$.  The
tower in $n$
\[
\{\pi_{q}(L_{n}K(S))\}
\]
is pro-constant.
\end{prop}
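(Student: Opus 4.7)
The plan is to derive pro-constancy of $\{\pi_q L_n K(S)\}$ from pro-constancy of the other three towers in the chromatically-localized Dundas-Goodwillie-McCarthy square, all of which follow from results already in this section.

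First, since $L_n$ is an exact endofunctor of spectra, applying it level-wise to the DGM homotopy cartesian square with $R=S$ produces for each $n$ a homotopy cartesian square
\[
\xymatrix{
L_n K(S)\ar[r]\ar[d]&L_n K(\pi_0 S)\ar[d]\\
L_n TC(S)\ar[r]&L_n TC(\pi_0 S)
}
\]
natural in $n$. The associated Mayer-Vietoris long exact sequence on homotopy groups becomes a long exact sequence in the abelian category of pro-abelian groups, relating $\{\pi_q L_n K(S)\}$ to the three companion towers $\{\pi_* L_n K(\pi_0 S)\}$, $\{\pi_* L_n TC(S)\}$, and $\{\pi_* L_n TC(\pi_0 S)\}$.

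Next, I would identify these three companion towers as pro-constant. For $\pi_0 S\in\{\bZ,\bZ_{(p)},\bZ\phat\}$, Mitchell's theorem (as already used to derive Theorem~\ref{thm:L1DGM}) gives $L_n K(\pi_0 S)\simeq L_1 K(\pi_0 S)$ and $L_n TC(\pi_0 S)\simeq L_1 TC(\pi_0 S)$ for all $n\geq 1$, so the $\pi_0 S$ towers are literally constant for $n\geq 1$. Pro-constancy of $\{\pi_q L_n TC(S)\}$ is the content of the paragraph immediately preceding the proposition, which extracts the pro-isomorphism strengthening of Hopkins-Ravenel chromatic convergence from the wedge decomposition of $TC(S;p)\phat$ used in the proof of Proposition~\ref{prop:fixTCS}.

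Finally, I would conclude by invoking the fact that pro-constant pro-abelian groups form an extension-closed subcategory: in the long exact sequence above, three out of every four consecutive terms are pro-constant, so the remaining term $\{\pi_q L_n K(S)\}$ is pro-constant. The main (mild) subtlety is justifying this extension closure. I would do so by characterizing pro-constancy as the conjunction of $\lim\nolimits^1 X=0$ with the canonical map $X\to c(\lim X)$ being a pro-isomorphism, and then applying the five-lemma in the pro-category to the comparison between a short exact sequence $0\to A\to B\to C\to 0$ and its constant-at-the-limit analogue $0\to c(\lim A)\to c(\lim B)\to c(\lim C)\to 0$.
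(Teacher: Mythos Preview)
Your approach is essentially the same as the paper's: use the $L_n$-localized Dundas--Goodwillie--McCarthy square, observe that the $K(\pi_0 S)$ and $TC(\pi_0 S)$ towers are literally constant for $n\geq 1$ by Mitchell's theorem while the $TC(S)$ tower is pro-constant by the pro-isomorphism refinement of Hopkins--Ravenel, and conclude for $K(S)$ by extension-closure of pro-constant towers. The paper states this argument in the paragraph immediately preceding the proposition and leaves the extension-closure step implicit; you have spelled it out.

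Two minor slips to fix. First, the Mayer--Vietoris long exact sequence is $3$-periodic, not $4$-periodic, so it is two out of every three terms that are known pro-constant (or, if you prefer, phrase it via the total fiber). Second, the canonical comparison map runs $c(\lim X)\to X$, not $X\to c(\lim X)$: the projections $\lim X\to X_n$ assemble to a map from the constant tower into $X$, and pro-constancy is exactly the statement that this map is a pro-isomorphism. (The condition $\lim^1 X=0$ is then automatic and need not be listed separately; its role in your argument is only to guarantee that $0\to c(\lim A)\to c(\lim B)\to c(\lim C)\to 0$ is short exact so that the five-lemma applies.) With the arrow reversed, your five-lemma justification goes through verbatim.
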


%%%%%%%%%%%%%%%%%%%%%%%%%%%%%%%%%%%%%%%%%%%%%%%%%%%%%%%%%%%%%%%%%%%%%%%%
\section{Tools for studying fiber of the chromatic completion map on 
\texorpdfstring{$K(\bS)$}{K(S)}}
\label{sec:tools}

In this section, we start work on Theorems~\ref{main:FS},
\ref{main:FSploc}, and~\ref{main:FSphat}, which identify the fiber $F$
of
the chromatic completion map for $K(\bS)$, $K(\bS_{(p)})$, and
$K(\bS\phat)$, respectively.  In the first part of this section, we
argue that the statements reduce to assertions for the
$p$-completions.  In the case of $K(\bS\phat)$, this is all that is
needed to prove Theorem~\ref{main:FSphat}, and the proof is completed
in this section.  Theorems~\ref{main:FS} and~\ref{main:FSploc} for
$\bS$ and $\bS_{(p)}$ are stated in terms of the Brown-Comenetz
duality, and for the $p$-complete versions of these statements, we
work with Anderson duality, which is an equivalent theory but involves
a suspension.  The second part of the section introduces terminology
for Anderson duality that makes it easier to state and prove precise
duality statements for spectra and for maps.

In this section, we treat the three cases as uniformly as possible,
writing $S$ for $\bS$, $\bS_{(p)}$, or $\bS\phat$ and $Z$ for
$\pi_{0}S$.  Rewriting in terms of our notation $F$ for the homotopy
fiber of the chromatic completion map (Notation~\ref{notn:F}), Proposition~\ref{prop:fixTCS} states that
$F(TC(S;p)\phat)$ is trivial.  Proposition~\ref{prop4} then implies
that $F(TC(S))$ is trivial.  Using this, 
Theorem~\ref{thm:L1DGM} gives a fiber sequence
\[
F(K(S))\to F(K(Z))\to F(TC(Z))\to \Sigma \dotsb.
\]
Writing $\trc_{Z}\colon K(Z)_{(p)}\to TC(Z)_{(p)}$ for the
$p$-localization of the cyclotomic trace and
$\Fib(\trc_{Z})$ for its fiber, swapping homotopy limits, we get a
fiber sequence  
\begin{equation}\label{eq:MainFib}
F(K(S)) \to \Fib(\trc_{Z})\to L_{1}\Fib(\trc_{Z})\to \Sigma \dotsb.
\end{equation}
The map $\Fib(\trc_{Z})\to L_{1}\Fib(\trc_{Z})$ is the chromatic
completion map,
so~\eqref{eq:MainFib} gives a weak equivalence
\[
F(K(S))\simeq F(\Fib(\trc_{Z})).
\]
Since the $L_{1}$-localization map is always a rational
equivalence, we get the following immediate consequence.

\begin{prop}\label{prop:integral}
For $S=\bS$, $\bS_{(p)}$, or $\bS\phat$, the fiber $F(K(S))$ of the
chromatic completion map $K(S)_{(p)}\to \holim L_{n}K(S)$ is
rationally trivial.  In particular, 
\[
F(K(S))\simeq (F(K(S))\phat)\sma \Sigma^{-1}M_{\bZ/p^{\infty}},
\]
where $M_{\bZ/p^{\infty}}$ denotes the Moore spectrum for
$\bZ/p^{\infty}$
\end{prop}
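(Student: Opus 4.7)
The plan is to exploit the fiber sequence~\eqref{eq:MainFib}
\[
F(K(S)) \to \Fib(\trc_{Z}) \to L_{1}\Fib(\trc_{Z})
\]
derived earlier in this section, together with the standard fact that the $L_{1}$-localization map $X\to L_{1}X$ is always a rational equivalence. (For this last point, one can observe that $L_{0}L_{1}\simeq L_{0}$: indeed, since $H\bQ$ is a retract of $E(1)\sma H\bQ$ as an $H\bQ$-module spectrum, any $E(1)_{*}$-isomorphism is a rational isomorphism.)

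For the rational triviality claim, I rationalize the displayed fiber sequence. The second map becomes an equivalence, so $F(K(S))_{\bQ}\simeq *$.

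For the second assertion, I first observe that $F(K(S))$ is $p$-local: $K(S)_{(p)}$ is $p$-local by construction, each $L_{n}K(S)=L_{E(n)}K(S)$ is $p$-local, and $p$-local spectra are closed under homotopy limits. It then suffices to show that any $p$-local, rationally trivial spectrum $Y$ satisfies $Y \simeq Y\phat \sma \Sigma^{-1}M_{\bZ/p^{\infty}}$. The arithmetic fracture square identifies $Y$ with the fiber of $Y\phat \to (Y\phat)_{\bQ}$ (using $Y_{\bQ}\simeq *$). On the other hand, smashing the defining cofiber sequence $\Sigma^{-1}M_{\bZ/p^{\infty}}\to \bS_{(p)}\to \bS_{\bQ}$ with the $p$-local spectrum $Y\phat$ gives another fiber sequence $Y\phat\sma \Sigma^{-1}M_{\bZ/p^{\infty}} \to Y\phat \to (Y\phat)_{\bQ}$ whose right-hand map matches the previous one, so the two fibers are naturally equivalent.

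The argument is essentially formal once the fiber sequence~\eqref{eq:MainFib} is in hand; I do not anticipate any serious obstacle beyond routine bookkeeping with the arithmetic fracture square.
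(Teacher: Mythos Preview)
Your argument is correct and matches the paper's approach exactly: the paper derives the proposition as an immediate consequence of the fiber sequence~\eqref{eq:MainFib} together with the fact that the $L_{1}$-localization map is always a rational equivalence. You have simply spelled out the routine arithmetic-square bookkeeping for the ``In particular'' clause that the paper leaves implicit.
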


\begin{proof}
Smashing any spectrum $X$ with the fiber sequence
\[
\Sigma^{-1}M_{\bZ/p^{\infty}}\to \bS_{(p)} \to \bS_{\bQ}\to M_{\bZ/p^{\infty}}\to \Sigma \dotsb
\]
shows that when $X$ is $p$-local and rationally trivial, 
\[
X\simeq X\sma \Sigma^{-1}M_{\bZ/p^{\infty}}.
\]
The fiber of the $p$-completion map $X\to X\phat$ is (by definition)
$\bS/p$-acyclic, and therefore $\bS/p^{n}$-acyclic for all $n$, and so
$M_{\bZ/p^{\infty}}$-acyclic, since $M_{\bZ/p^{\infty}}\simeq \hocolim
\bS/p^{n}$.  Smashing the fiber sequence for the $p$-completion map with
$\Sigma^{-1}M_{\bZ/p^{\infty}}$ then gives a weak equivalence
\[
X\sma \Sigma^{-1}M_{\bZ/p^{\infty}}\overto{\simeq} X\phat \sma \Sigma^{-1}M_{\bZ/p^{\infty}}.
\qedhere
\]
\end{proof}

Theorem~\ref{main:FSphat} (the case $S=\bS\phat$) is now immediate:
work of Hesselholt-Madsen~\cite[Thm.~B,D]{HM2} implies 
\[
\Fib(\trc_{\bZ\phat})\phat\simeq \Sigma^{-2} H\bZ\phat.
\]
The $p$-completion of $L_{1}\Fib(\trc_{\bZ\phat})$ is therefore
trivial, and we see that $F(K(\bS\phat))\phat\simeq
\Sigma^{-2}H\bZ\phat$.  By the proposition, we get 
\[
F(K(\bS\phat))\simeq \Sigma^{-2}H\bZ\phat\sma
\Sigma^{-1}M_{\bZ/p^{\infty}}\simeq \Sigma^{-3}H\bZ/p^{\infty}.
\]
This completes the proof of Theorem~\ref{main:FSphat}.

Both remaining cases, $S=\bS$ and $S=\bS_{(p)}$, require the
identification of $p$-local Brown-Comenetz duals of certain spectra and of
certain maps.  For a spectrum $X$, the $p$-local Brown-Comenetz dual
$I_{\bZ/p^{\infty}}X$ is characterized (up to isomorphism in the
stable category) by its representable functor 
\[
[-,I_{\bZ/p^{\infty}}X]\iso \Hom(\pi_{0}((-)\sma X),\bZ/p^{\infty}).
\]
For $p$-complete spectra, it works slightly better to work in terms of
$\bZ\phat$-Anderson duals.  We use the following terminology:

\begin{defn}
For $p$-complete spectra $X,Y$, an
\term{Anderson duality pairing} on $X,Y$ is a homomorphism
\[
\mu\colon \pi_{0}(X\sma Y;\bZ/p^{\infty})\to \bZ/p^{\infty}
\]
such that the induced maps
\begin{gather*}
\pi_{q}(X;\bZ/p^{\infty})\otimes \pi_{-q}(Y)\to
\pi_{0}(X\sma Y;\bZ/p^{\infty})\overto{\mu} 
\bZ/p^{\infty}\\
\pi_{q}(X)\otimes \pi_{-q}(Y;\bZ/p^{\infty})\to
\pi_{0}(X\sma Y;\bZ/p^{\infty})\overto{\mu} 
\bZ/p^{\infty}
\end{gather*}
are perfect pairings for all $q\in \bZ$, i.e., if they are adjoint to
isomorphisms  
\begin{gather*}
\pi_{q}(X;\bZ/p^{\infty})\to \Hom(\pi_{-q}(Y),
\bZ/p^{\infty})\\
\pi_{-q}(Y;\bZ/p^{\infty})\to \Hom(\pi_{q}(X),
\bZ/p^{\infty}).
\end{gather*}
\end{defn}

The homomorphism $\mu$ induces a map of spectra $X\sma Y\sma
M_{\bZ/p^{\infty}}$ to $I_{\bZ/p^{\infty}}\bS$, which is then adjoint
to maps 
\[
X\sma M_{\bZ/p^{\infty}}\to I_{\bZ/p^{\infty}}Y
\qquad \text{and}\qquad 
Y\sma M_{\bZ/p^{\infty}}\to I_{\bZ/p^{\infty}}X.
\]
The perfect pairing condition implies that these maps are weak
equivalences.  The following proposition then restates this in the
form we use to study~\eqref{eq:MainFib}. 

\begin{prop}\label{prop:BCd}
For $p$-complete spectra $X,Y$, an Anderson duality pairing
induces isomorphisms in the stable category
\[
X\overto{\simeq}\Sigma^{-1}(I_{\bZ/p^{\infty}}Y)\phat
\qquad \text{and}\qquad 
Y\overto{\simeq}\Sigma^{-1}(I_{\bZ/p^{\infty}}X)\phat
\]
and isomorphisms in the stable category
\[
X\sma \Sigma^{-1}M_{\bZ/p^{\infty}}\overto{\simeq}\Sigma^{-1}I_{\bZ/p^{\infty}}Y
\qquad \text{and}\qquad 
Y\sma \Sigma^{-1}M_{\bZ/p^{\infty}}\overto{\simeq}\Sigma^{-1}I_{\bZ/p^{\infty}}X.
\]
\end{prop}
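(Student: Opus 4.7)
The plan is to first verify the second pair of equivalences, which amounts to unpacking the perfect pairing hypothesis, and then to derive the first pair by $p$-completing a rational cofiber sequence. The paragraph preceding the statement already produces the candidate map $\phi\colon X\sma \Sigma^{-1}M_{\bZ/p^{\infty}}\to \Sigma^{-1}I_{\bZ/p^{\infty}}Y$ (and its symmetric partner) by adjointing $\mu$ using the representing property of $I_{\bZ/p^{\infty}}\bS$. To show $\phi$ is a weak equivalence I would check it on each $\pi_{q}$: the source has $\pi_{q+1}(X;\bZ/p^{\infty})$ and the target has $\pi_{q+1}(I_{\bZ/p^{\infty}}Y)=\Hom(\pi_{-q-1}(Y),\bZ/p^{\infty})$, and by construction $\phi_{*}$ is adjoint to the pairing
\[
\pi_{q+1}(X;\bZ/p^{\infty})\otimes \pi_{-q-1}(Y)\to \pi_{0}(X\sma Y;\bZ/p^{\infty})\overto{\mu}\bZ/p^{\infty}.
\]
The perfect-pairing hypothesis asserts precisely that this adjoint is an isomorphism for every $q$, so $\phi$ is a $\pi_{*}$-isomorphism and hence a weak equivalence.

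For the first pair, I would smash $X$ with the cofiber sequence $\Sigma^{-1}M_{\bZ/p^{\infty}}\to \bS_{(p)}\to \bS\bQ$. Since $X$ is $p$-complete (hence $p$-local), $X\sma \bS_{(p)}\simeq X$, yielding a cofiber sequence
\[
X\sma \Sigma^{-1}M_{\bZ/p^{\infty}}\to X\to X\bQ.
\]
Applying $(-)\phat$ preserves this sequence and annihilates the rational term $X\bQ$ (since $X\bQ$ is $\bS/p$-acyclic), so the $p$-completion of the leftmost map is a weak equivalence. Combined with the equivalence just established, this gives
\[
X\simeq (X\sma \Sigma^{-1}M_{\bZ/p^{\infty}})\phat\simeq \Sigma^{-1}(I_{\bZ/p^{\infty}}Y)\phat,
\]
and the claim for $Y$ follows by symmetry.

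I do not foresee a serious obstacle. The main bookkeeping item is verifying that $\phi_{*}$ on the relevant $\pi_{q}$ really coincides with the adjoint of $\mu$; this is a naturality argument flowing from the construction of $\phi$ via the representing property of the Brown-Comenetz dualizing spectrum. Tracking the suspensions correctly and confirming the vanishing of $(X\bQ)\phat$ are routine.
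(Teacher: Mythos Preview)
Your proposal is correct and follows the paper's approach: the paper states (in the paragraph just before the proposition) that the perfect-pairing hypothesis makes the adjoint maps $X\sma M_{\bZ/p^{\infty}}\to I_{\bZ/p^{\infty}}Y$ weak equivalences and then simply declares the proposition a ``restatement'' of this fact, without further argument. Your write-up makes explicit both the homotopy-group check for the second pair of equivalences and the passage to the first pair via $p$-completion of the cofiber sequence $\Sigma^{-1}M_{\bZ/p^{\infty}}\to\bS_{(p)}\to\bS\bQ$, details the paper leaves to the reader.
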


As an example, we have an
Anderson duality pairing for $KU\phat$ with itself
given by the map
\[
\pi_{0}(KU\phat \sma KU\phat;\bZ/p^{\infty})\to 
\pi_{0}(KU\phat;\bZ/p^{\infty})\iso \bZ/p^{\infty}
\]
with the first map induced by multiplication and the second map the
isomorphism 
\[
\pi_{0}(KU\phat;\bZ/p^{\infty})
\iso \pi_{0}(KU\phat)\otimes \bZ/p^{\infty}
\iso \bZ/p^{\infty}
\]
deriving from the canonical isomorphism 
$\pi_{0}(KU)\iso \bZ$ (induced by inclusion of the unit).  For $p>2$,
$KU\phat$ decomposes into a wedge
of suspensions of the so-called Adams summand $L\phat$, 
\[
KU\phat\simeq L\phat \vee \Sigma^{2}L\phat \vee \dotsb \vee \Sigma^{2p-4}L\phat.
\]
The Adams summand admits an Anderson duality pairing with itself via
the multiplication $L\phat \sma L\phat\to L\phat$ and canonical
isomorphism $\pi_{0}(L\phat;\bZ/p^{\infty})\iso \bZ/p^{\infty}$ as
well.  For $p=2$, we have a Anderson duality pairing for $KO\phat[2]$
and $\Sigma^{4}KO\phat[2]$ again coming from multiplication and
a choice of isomorphism
$\pi_{0}(\Sigma^{4}KO\phat[2];\bZ/2^{\infty})\iso \bZ/2^{\infty}$; see
\cite[4.16]{Anderson-UCT} for a proof.

For the theorems of the introduction, we also need to be able to
identify when maps are Brown-Comenetz dual.  We use the following terminology.

\begin{defn}\label{defn:BCdcrit}
Given Anderson duality
pairings $(X_{1},Y_{1},\mu_{1})$ and $(X_{2},Y_{2},\mu_{2})$, we say
that maps $f\colon X_{1}\to X_{2}$ and $g\colon Y_{2}\to Y_{1}$ are
\term{Anderson dual} when the diagram
\[
\xymatrix{%
\pi_{0}(X_{1}\sma Y_{2};\bZ/p^{\infty})\ar[r]^{g_{*}}\ar[d]_{f_{*}}
&\pi_{0}(X_{1}\sma Y_{1};\bZ/p^{\infty})\ar[d]^{\mu_{1}}\\
\pi_{0}(X_{2}\sma Y_{2};\bZ/p^{\infty})\ar[r]_{\mu_{2}}
&\bZ/p^{\infty}
}
\]
commutes.
\end{defn}

Unwinding the adjunctions, we get the following proposition.

\begin{prop}
Let $(X_{1},Y_{1},\mu_{1})$ and $(X_{2},Y_{2},\mu_{2})$ be Anderson
duality pairings of $p$-complete spectra.  Given $f\colon X_{1}\to X_{2}$ and $g\colon
Y_{2}\to Y_{1}$, the following are equivalent:
\begin{enumerate}
\item The maps $f$ and $g$ are Anderson dual.
\item The map $g$ coincides with
$\Sigma^{-1}(I_{\bZ/p^{\infty}}f)\phat$ 
under the
induced weak equivalences of Proposition~\ref{prop:BCd}.
\item The map  $f$
coincides with $\Sigma^{-1}(I_{\bZ/p^{\infty}}g)\phat$ 
under the
induced weak equivalences of Proposition~\ref{prop:BCd}.
\end{enumerate}
\end{prop}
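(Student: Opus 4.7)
The plan is to establish $(1) \Leftrightarrow (2)$ by unwinding the adjunctions defining $I_{\bZ/p^{\infty}}$ together with the equivalences furnished by Proposition~\ref{prop:BCd}, and then to note that $(1) \Leftrightarrow (3)$ follows by the same argument with the roles of $(X_{i}, f)$ and $(Y_{i}, g)$ swapped. This symmetry is manifest: the diagram of Definition~\ref{defn:BCdcrit} is invariant under this relabelling (the two perfect pairing conditions on $\mu_{i}$ are symmetric in $X_{i}$ and $Y_{i}$), and correspondingly Proposition~\ref{prop:BCd} supplies one equivalence for each orientation of the duality.

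For $(1) \Leftrightarrow (2)$, the key observation is that the weak equivalence
\[
\alpha_{i}\colon X_{i}\sma \Sigma^{-1}M_{\bZ/p^{\infty}}\overto{\simeq}\Sigma^{-1}I_{\bZ/p^{\infty}}Y_{i}
\]
from Proposition~\ref{prop:BCd} is, by construction, the Brown-representability adjoint of the pairing $\mu_{i}$ under the defining universal property
\[
[Z,I_{\bZ/p^{\infty}}Y_{i}]\iso \Hom(\pi_{0}(Z\sma Y_{i}),\bZ/p^{\infty}).
\]
Condition (2), after smashing with $\Sigma^{-1}M_{\bZ/p^{\infty}}$ and using both $\alpha_{i}$, translates into the commutativity in the stable category of the square whose horizontal maps are $f\sma \id$ and $\Sigma^{-1}I_{\bZ/p^{\infty}}g$ and whose vertical maps are $\alpha_{1}$ and $\alpha_{2}$. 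Taking $Z = X_{1}\sma \Sigma^{-1}M_{\bZ/p^{\infty}}$ in the universal property and applying Yoneda, both composites around this square correspond to homomorphisms $\pi_{0}(X_{1}\sma Y_{2};\bZ/p^{\infty})\to \bZ/p^{\infty}$; tracing through the definitions, the composite $\alpha_{2}\circ(f\sma\id)$ picks out $\mu_{2}\circ f_{*}$, while $(\Sigma^{-1}I_{\bZ/p^{\infty}}g)\circ \alpha_{1}$ picks out $\mu_{1}\circ g_{*}$, using that $I_{\bZ/p^{\infty}}g$ acts by precomposition with $g$ under the representability identification. Equality of these two homomorphisms is precisely the commutativity of the square in Definition~\ref{defn:BCdcrit}.

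There is no real obstacle: the only care required is keeping track of the suspension shifts and the smash with $M_{\bZ/p^{\infty}}$ built into the two formulations in Proposition~\ref{prop:BCd}. Once the tautological identification of $\alpha_{i}$ as the adjoint of $\mu_{i}$ is in hand, the remainder is a pure Yoneda chase, and no further input from the structure of the $X_{i}$ or $Y_{i}$ enters.
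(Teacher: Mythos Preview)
Your approach is exactly what the paper does: it offers no proof beyond the phrase ``unwinding the adjunctions,'' and you have correctly carried out that unwinding via the representing property of $I_{\bZ/p^{\infty}}$ and a Yoneda argument.

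One small slip: the square you write down, with horizontal maps $f\sma\id$ and $\Sigma^{-1}I_{\bZ/p^{\infty}}g$ and vertical maps $\alpha_{1},\alpha_{2}$, is the translation of condition~(3), not condition~(2). Condition~(2) concerns $g$ against $\Sigma^{-1}(I_{\bZ/p^{\infty}}f)\phat$ under the equivalences $Y_{i}\simeq \Sigma^{-1}(I_{\bZ/p^{\infty}}X_{i})\phat$, whereas your $\alpha_{i}$ realize $X_{i}\sma\Sigma^{-1}M_{\bZ/p^{\infty}}\simeq \Sigma^{-1}I_{\bZ/p^{\infty}}Y_{i}$, so the commutativity you establish is precisely ``$f$ corresponds to $\Sigma^{-1}I_{\bZ/p^{\infty}}g$,'' i.e.\ condition~(3). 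Your computation that the two composites around the square adjoint to $\mu_{2}\circ f_{*}$ and $\mu_{1}\circ g_{*}$ respectively is correct and gives $(1)\Leftrightarrow(3)$. Since, as you note, the symmetry of Definition~\ref{defn:BCdcrit} under swapping $(X_{i},f)\leftrightarrow(Y_{i},g)$ is manifest, the same argument with the roles exchanged yields $(1)\Leftrightarrow(2)$, and the proof goes through unchanged once you relabel.
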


\begin{proof}
If $f$ and $g$ are Anderson dual, then by the characterization of maps
into $I_{\bZ/p^{\infty}}$ in the stable category as a representable
functor, we have that the diagram (in the stable category) on the left
\[
\xymatrix{%
(X_{1}\sma Y_{2})\sma M_{\bZ/p^{\infty}}\ar[r]^{g_{*}}\ar[d]_{f_{*}}
&(X_{1}\sma Y_{1})\sma M_{\bZ/p^{\infty}}\ar[d]^{I\mu_{1}}
&X_{1}\sma Y_{2}\ar[r]^{g_{*}}\ar[d]_{f_{*}}
&X_{1}\sma Y_{1}\ar[d]^{I'\mu_{1}}\\
(X_{2}\sma Y_{2})\sma M_{\bZ/p^{\infty}}\ar[r]_-{I\mu_{2}}
&I_{\bZ/p^{\infty}}
&X_{2}\sma Y_{2}\ar[r]_-{I'\mu_{2}}
&\Sigma^{-1}(I_{\bZ/p^{\infty}})\phat
}
\]
commutes, where $I\mu_{i}$ is the map representing $\mu_{i}$.  Using
the adjunction of the functors $(-)\sma M_{\bZ/p^{\infty}}$ and
$\Sigma^{-1}(-)\phat$, we deduce that the diagram on the right commutes, where
the arrows $I'\mu_{i}$ are adjoint to the arrows $I\mu_{i}$.  Using
the adjunction of smash product $(-)\sma Y_{i}$ and function spectra
$F(Y_{i},-)$ functors, together with the canonical isomorphism 
\[
F(Y_{i},\Sigma^{-1}(I_{\bZ/p^{\infty}})\phat)\iso
\Sigma^{-1}(I_{\bZ/p^{\infty}}Y_{i})\phat,
\]
it then follows that the diagram
\[
\xymatrix{%
X_{1}\ar[r]\ar[d]_{f}
&\Sigma^{-1}(I_{\bZ/p^{\infty}}Y_{1})\phat
   \ar[d]^{\Sigma^{-1}(I_{\bZ/p^{\infty}}g)\phat}\\
X_{2}\ar[r]
&\Sigma^{-1}(I_{\bZ/p^{\infty}}Y_{2})\phat
}
\]
commutes where the top horizontal arrow is adjoint to $I'\mu_{1}$ and
the bottom horizontal arrow is adjoint to $I'\mu_{2}$.  This shows (i)
implies (iii).  Running the adjunctions backwards proves (iii) implies
(i).  Symmetry of $f$ and $g$ gives (i) implies (ii) and (ii) implies (i).
\end{proof}

%%%%%%%%%%%%%%%%%%%%%%%%%%%%%%%%%%%%%%%%%%%%%%%%%%%%%%%%%%%%%%%%%%%%%%%%
\section{Proof of Theorem~\ref{main:FS} for 
\texorpdfstring{$p>2$}{p>2}}
\label{sec:Sg2}

In this section, we study the fiber of the chromatic completion map
for $K(\bS)$ in the case $p>2$.  As we explain, the results in this
section depend on recent work of the first and second author on
arithmetic duality in algebraic $K$-theory~\cite{BM-ktpd}, and the
results for $p=2$ in the next section depend on the older work of
Rognes~\cite{Rognes2} that partially inspired it.

We start from~\eqref{eq:MainFib} and Proposition~\ref{prop:integral}
which identify $F(K(\bS))$ in terms of the fiber of the cyclotomic
trace $\trc_{\bZ}$ as
\begin{equation}\label{eq:pFib}
F(K(\bS))\simeq F(\Fib(\trc_{\bZ}))\phat\sma \Sigma^{-1}M_{\bZ/p^{\infty}}.
%\simeq F(\Fib(\trc_{\bZ})\phat)\phat\sma \Sigma^{-1}M_{\bZ/p^{\infty}}
\end{equation}
In the case $p>2$, the first two authors identify
$\Fib(\trc_{\bZ})\phat$ in~\cite{BM-ktpd} in terms of the Anderson
dual of $\Sigma L_{K(1)}K(\bZ)$ by constructing
an explicit Anderson duality pairing on $L_{K(1)}K(\bZ)$, $\Sigma
L_{K(1)}\Fib(\trc_{\bZ})$.  This pairing comes from the module
multiplication
\[
L_{K(1)}K(\bZ)\sma \Sigma L_{K(1)}\Fib(\trc_{\bZ})\to \Sigma L_{K(1)}\Fib(\trc_{\bZ})
\]
and a canonical isomorphism
\[
u_{\bZ}\colon \pi_{-1}(L_{K(1)}\Fib(\trc_{\bZ});\bZ/p^{\infty})\overto{\iso}\bZ/p^{\infty}
\]
constructed in~\cite[\S1]{BM-ktpd} (compare [ibid.,(1.6)] and the
discussion following Conjecture~\ref{conj:ktpd} below). 
The $L_{K(1)}K(\bZ)$-module structure on $L_{K(1)}\Fib(\trc_{\bZ})$
comes from a $K(\bZ)\phat$-module structure on $\Fib(\trc_{\bZ})\phat$
and we have a $K(\bZ)\phat$-module structure on the fiber of the map 
\[
\Fib(\trc_{\bZ})\phat\to L_{K(1)}\Fib(\trc_{\bZ}),
\]
which is weakly equivalent to $F(\Fib(\trc_{\bZ}))\phat$.  This gives a pairing
\[
K(\bZ)\phat \sma \Sigma^{2}F(\Fib(\trc_{\bZ}))\phat\to \Sigma^{2}F(\Fib(\trc_{\bZ}))\phat.
\]
Because $\pi_{-1}(\Fib(\trc_{\bZ})\phat;\bZ/p^{\infty})=0$ and the map
$\Fib(\trc_{\bZ})\phat\to L_{K(1)}\Fib(\trc_{\bZ})$ is injective on
$\pi_{*}(-;\bZ/p^{\infty})$ (see specifics on
$\Fib(\trc_{\bZ})\phat$ and $L_{K(1)}\Fib(\trc_{\bZ})$ below), the map 
\[
\pi_{-1}(L_{K(1)}\Fib(\trc_{\bZ});\bZ/p^{\infty})\to
\pi_{-2}(F(\Fib(\trc_{\bZ}))\phat;\bZ/p^{\infty})
\]
in the long exact sequence of homotopy groups is an isomorphism.  We
prove the following theorem; together with the weak equivalence
$F(K(\bS))\simeq F(\Fib(\trc_{\bZ}))$ of~\eqref{eq:MainFib}, 
it implies Theorem~\ref{main:FS} in the case $p>2$.

\begin{thm}\label{thm:ADFS}
For $p>2$, the homomorphism
\begin{multline*}
\pi_{0}(K(\bZ)\phat \sma \Sigma^{2}F(\Fib(\trc_{\bZ}))\phat;\bZ/p^{\infty})
\to \pi_{0}(\Sigma^{2}F(\Fib(\trc_{\bZ}))\phat;\bZ/p^{\infty})\\
\iso
\pi_{-1}(L_{K(1)}\Fib(\trc_{\bZ});\bZ/p^{\infty})\overto{u_{\bZ}}\bZ/p^{\infty}
\end{multline*}
makes $K(\bZ)\phat$, $\Sigma^{2}F(\Fib(\trc_{\bZ}))\phat$ an Anderson duality pair.
In particular, it gives a weak equivalence
\[
F(\Fib(\trc_{\bZ}))\overto{\simeq}\Sigma^{-3}I_{\bZ/p^{\infty}}(K(\bZ)\phat)\simeq 
\Sigma^{-3}I_{\bZ/p^{\infty}}(K(\bZ)).
\]
\end{thm}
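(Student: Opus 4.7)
\medskip
\noindent\textbf{Proof proposal.}
The plan is to verify that the candidate spectrum-level map $\Phi\colon K(\bZ)\phat \to \Sigma(I_{\bZ/p^{\infty}}F(\Fib(\trc_{\bZ}))\phat)\phat$ adjoint to $\mu$ is a weak equivalence, which by Proposition~\ref{prop:BCd} is equivalent to $\mu$ being an Anderson duality pairing. The strategy is to compare $\Phi$ with the known equivalence $\bar\Phi\colon L_{K(1)}K(\bZ) \overto{\simeq} (I_{\bZ/p^{\infty}}L_{K(1)}\Fib(\trc_{\bZ}))\phat$ coming from the Anderson duality pairing on $(L_{K(1)}K(\bZ), \Sigma L_{K(1)}\Fib(\trc_{\bZ}))$ furnished by \cite{BM-ktpd}.

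By construction, the pairing $\mu$ is obtained from $\bar\mu$ by precomposing with the localization map $\ell\colon K(\bZ)\phat \to L_{K(1)}K(\bZ)$ and using the boundary isomorphism induced by the connecting map $\partial\colon L_{K(1)}\Fib(\trc_{\bZ}) \to \Sigma F(\Fib(\trc_{\bZ}))\phat$ of the defining fiber sequence for $F(\Fib(\trc_{\bZ}))\phat$. This naturality yields a commutative square
\[
\xymatrix{
K(\bZ)\phat \ar[r]^-{\Phi} \ar[d]_-{\ell} & \Sigma(I_{\bZ/p^{\infty}} F(\Fib(\trc_{\bZ}))\phat)\phat \ar[d] \\
L_{K(1)}K(\bZ) \ar[r]_-{\bar\Phi} & (I_{\bZ/p^{\infty}} L_{K(1)}\Fib(\trc_{\bZ}))\phat
}
\]
whose right vertical map comes from applying $I_{\bZ/p^{\infty}}$ to $\partial$ (and $p$-completing) and fits into the fiber sequence obtained by dualizing the defining fiber sequence for $F(\Fib(\trc_{\bZ}))\phat$. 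Since the bottom map is an equivalence, $\Phi$ is a weak equivalence if and only if the induced map between the fibers of the two vertical maps is a weak equivalence.

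The main obstacle is this remaining fiber-level identification. The fiber of $\ell$ is a truncated $p$-complete spectrum whose homotopy is pinned down by the (resolved) Quillen-Lichtenbaum conjecture for $\bZ$ at $p>2$, and the fiber of the right vertical map can be expressed, via the $I_{\bZ/p^{\infty}}$-dualization of the fiber sequence, in terms of the Anderson dual of $\Fib(\trc_{\bZ})\phat$. Using the cyclotomic trace fiber sequence $\Fib(\trc_{\bZ})\phat \to K(\bZ)\phat \to TC(\bZ)\phat$ and the Hesselholt-Madsen identification of $\Fib(\trc_{\bZ\phat})\phat\simeq\Sigma^{-2}H\bZ\phat$ (already invoked in the proof of Theorem~\ref{main:FSphat}), one reduces to an explicit comparison of bounded-below $p$-complete spectra with known homotopy. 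The desired weak equivalence on fibers should then follow from a long-exact-sequence and five-lemma argument with $\bZ/p^{\infty}$ coefficients, the key input being the specific arithmetic duality between $K(\bZ)\phat$ and the cyclotomic trace fiber captured by \cite{BM-ktpd} in low degrees; verifying that the signs and boundary identifications work out compatibly with the construction of $u$ is where the substance of the argument lies.
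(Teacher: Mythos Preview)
Your commutative square relating $\Phi$ to the known $K(1)$-local equivalence $\bar\Phi$ is correctly set up, and reducing to the induced map on vertical fibers is a natural move. But the proposal stops precisely where the work begins: the statement that the fiber map is a weak equivalence is formally of the same strength as the theorem itself, and you have not supplied an argument for it. The appeal to Hesselholt--Madsen is a red herring here---the identification $\Fib(\trc_{\bZ\phat})\phat\simeq\Sigma^{-2}H\bZ\phat$ concerns $\bZ\phat$, not $\bZ$, and gives you no direct handle on $\Fib(\trc_{\bZ})\phat$ or its Anderson dual. The five-lemma only trades one unknown equivalence for another unless you have independent control of two of the three comparison maps; you have one (the $K(1)$-local one), so you still owe a full equivalence on the remaining piece.

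The paper's proof supplies exactly the missing input: an explicit eigenspectrum decomposition. For $p>2$, $L_{K(1)}K(\bZ)$ and $L_{K(1)}\Fib(\trc_{\bZ})$ split as $J\phat\vee Y_{0}\vee\dots\vee Y_{p-2}$ and $J\phat\vee X_{0}\vee\dots\vee X_{p-2}$, with $(Y_{i},\Sigma X_{1-i})$ and $(J\phat,\Sigma J\phat)$ Anderson dual under $\bar\mu$. Then $K(\bZ)\phat$ and $\Sigma^{2}F(\Fib(\trc_{\bZ}))\phat$ are identified, summand by summand, as explicit Postnikov truncations of these pieces. The key observation is that $K(\bZ)\phat\to L_{K(1)}K(\bZ)$ is split injective on $\pi_{*}$ and $\Sigma L_{K(1)}\Fib(\trc_{\bZ})\to\Sigma^{2}F(\Fib(\trc_{\bZ}))\phat$ is split surjective on $\pi_{*}$; the naturality diagrams then reduce the perfect-pairing condition to checking, for each dual pair of summands, that the truncation ranges correspond. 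That is a short finite verification using the $2(p-1)$-periodicity of $\pi_{*}X_{k}$, $\pi_{*}Y_{i}$ and a handful of specific vanishings (for instance $\pi_{-3}(X_{k})=0$, and that $y_{0}$ and $\tau_{\leq 1}X_{1}$ are both trivial). This degree bookkeeping is the actual content, and it cannot be replaced by an abstract fiber-sequence argument without input of equivalent strength.
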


To prove Theorem~\ref{thm:ADFS}, we need to break $K(\bZ)\phat$ and
$F(\Fib(\trc_{\bZ}))\phat$ into ``eigenspectra'' summands.  The
spectra $L_{K(1)}K(\bZ)$ and $\Sigma L_{K(1)}\Fib(\trc_{\bZ})$ split
into the summands that would be expected from an action of
($p$-adically interpolated) Adams operations, and \cite{BM-kstub}
showed that the Anderson duality on this pair respects this
splitting. Following the notation of \cite[\S2]{BM-kstub},  
\begin{gather*}
L_{K(1)}K(\bZ)\simeq J\phat \vee Y_{0}\vee \dotsb \vee Y_{p-2}\\
L_{K(1)}\Fib(\trc_{\bZ})\simeq J\phat \vee X_{0}\vee \dotsb \vee X_{p-2}
\end{gather*}
(numbered mod $p-1$) with $Y_{i}$, $\Sigma X_{1-i}$ Anderson dual
and $J\phat$, $\Sigma J\phat$ Anderson dual under the restriction of
the pairing above by~\cite[4.1]{BM-kstub}.  Here $Y_{i}$ is the fiber of a map of the form
\[
\bigvee \Sigma^{2i-1}L\phat\to
\bigvee \Sigma^{2i-1}L\phat
\]
for some wedges of copies of $\Sigma^{2i-1}L\phat$, where as above $L\phat$ denotes
the Adams summand of $KU\phat$; it follows that $X_{k}$ is the fiber
of a map of the form
\[
\bigvee \Sigma^{2k-1}L\phat\to
\bigvee \Sigma^{2k-1}L\phat.
\]
In terms of Brown-Comenetz duality, we have:
\begin{gather*}
J\phat\simeq \Sigma^{-2}(I_{\bZ/p^{\infty}}(J\phat))\phat\\
X_{k}\simeq \Sigma^{-2}(I_{\bZ/p^{\infty}}Y_{1-k})\phat.
\end{gather*}

Again following the notation of~\cite[\S2]{BM-kstub}, we let 
\[
y_{i}=\tau_{\geq 2}Y_{i}, \qquad 
x_{k}=\tau_{\geq 2}X_{k}, \ k\neq 0,\qquad 
x_{0}=\tau_{\geq -2}X_{0}
\]
and then
\begin{gather*}
K(\bZ)\phat \simeq j\phat \vee y_{0}\vee\dotsb\vee y_{p-2}\\
\Fib(\trc_{\bZ})\phat\simeq j\phat \vee x_{0} \vee\dotsb\vee x_{p-2}.
\end{gather*}
We note that the homotopy groups of $X_{i}$ and $Y_{i}$ are
concentrated in degrees congruent to $2i-1$ and $2i-2$ mod $2(p-1)$,
so there is a lot of leeway in most of the truncations. 

The notation above gives an identification of $F(\Fib(\trc_{\bZ}))$ in
terms of Whitehead truncations
\[
\Sigma F(\Fib(\trc_{\bZ}))\phat\simeq \tau_{\leq -1}J\phat \vee
\tau_{\leq -3}X_{0} \vee \tau_{\leq 1}X_{1}\vee \dotsb 
\vee \tau_{\leq 1} X_{p-2}.
\]
We can now prove Theorem~\ref{thm:ADFS}.

\begin{proof}[Proof of Theorem~\ref{thm:ADFS}]
From the work above we see that $K(\bZ)\phat\to L_{K(1)}K(\bZ)$
induces a split injection on $\pi_{*}$ and 
$\Sigma L_{K(1)}\Fib(\trc_{\bZ})\to \Sigma^{2}
F(\Fib(\trc_{\bZ}))\phat$ induces a split surjection on $\pi_{*}$.  By
construction of the pairings, the diagrams 
\[
\xymatrix@R-.5pc{%
\pi_{q}(\Sigma L_{K(1)}\Fib(\trc_{\bZ});\bZ/p^{\infty})\ar[r]^-{\iso}\ar[d]
&\Hom(\pi_{-q}(L_{K(1)}K(\bZ)),\bZ/p^{\infty})\ar[d]\\
\pi_{q}(\Sigma^{2}F(\Fib(\trc_{\bZ}))\phat;\bZ/p^{\infty})\ar[r]
&\Hom(\pi_{-q}(K(\bZ)\phat),\bZ/p^{\infty})\\
\pi_{q}(K(\bZ)\phat;\bZ/p^{\infty})\ar[r]\ar[d]
&\Hom(\pi_{-q}(\Sigma^{2}F(\Fib(\trc_{\bZ}))\phat),\bZ/p^{\infty})\ar[d]\\
\pi_{q}(L_{K(1)}K(\bZ);\bZ/p^{\infty})\ar[r]_-{\iso}
&\Hom(\pi_{-q}(\Sigma L_{K(1)}\Fib(\trc_{\bZ})),\bZ/p^{\infty})\\
}
\]
commute, so we just need to check that the summands for $K(\bZ)\phat$
and $\Sigma^{2}F(\Fib(\trc_{\bZ}))\phat$ correspond.  For the summands 
\[
j\phat=\tau_{\geq 0}J\phat \text{ in }K(\bZ)\phat
\qquad\text{and}\qquad 
\Sigma \tau_{\leq -1}J\phat\simeq \tau_{\leq 0}\Sigma J\phat\text{ in }
\Sigma^{2}F(\Fib(\trc))\phat,
\]
the homotopy groups match up since $\pi_{-1}(J\phat)$ is torsion free.
The summands
\[
y_{0}=\tau_{\geq 2}Y_{0} \text{ in }K(\bZ)\phat
\qquad\text{and}\qquad 
\Sigma \tau_{\leq 1}X_{1}\simeq \tau_{\leq 2}\Sigma X_{1}\text{ in }
\Sigma^{2}F(\Fib(\trc))\phat,
\]
are both the trivial spectrum; see~\cite[2.4,2.9]{BM-kstub}.  Because
$X_{k}$ has homotopy groups
only in degrees congruent to $2k-1$ and $2k-2$ mod $2(p-1)$, we have
$\tau_{\leq 1}X_{k}\overto{\simeq}\tau_{\leq -3}X_{k}$ for 
$k=2,\dotsc,p-2$.   Then for all $k$, $i=p-k$ (mod $p-1$), we see that
the homotopy groups match up in the summands 
\[
y_{i}=\tau_{\geq 2}Y_{i} \text{ in }K(\bZ)\phat
\qquad\text{and}\qquad 
\Sigma \tau_{\leq -3}X_{k}\simeq \tau_{\leq -2}\Sigma X_{k}\text{ in }
\Sigma^{2}F(\Fib(\trc))\phat,
\]
since $\pi_{-3}(X_{k})=0$ for all $k$.
(We have $\pi_{-3}(L_{K(1)}\Fib(\trc_{\bZ}))=0$ since
$\pi_{1}(L_{K(1)}K(\bZ))$ is torsion free
and $\pi_{2}(L_{K(1)}K(\bZ))$ is torsion.)
\iffalse% Other reasons $\pi_{-3}X_{k}=0$
(Because the positive degree homotopy
groups of the $Y_{i}$ are torsion in even degree and free in odd
degree, we have that the homotopy groups of $X_{k}$ are zero in all odd
negative degrees.) 
(In fact, the summands $Y_{2}$ and $X_{p-2}$ are trivial.)
\fi
\end{proof}

%%%%%%%%%%%%%%%%%%%%%%%%%%%%%%%%%%%%%%%%%%%%%%%%%%%%%%%%%%%%%%%%%%%%%%%%
\section{Proof of Theorem~\ref{main:FS} for \texorpdfstring{$p=2$}{p=2}}
\label{sec:Se2}

As in the previous section, we use~\eqref{eq:MainFib} and
Proposition~\ref{prop:integral} to identify $F(K(\bS))$ in terms of
the fiber of the cyclotomic trace $\trc_{\bZ}$ as
\begin{equation}\label{eq:2Fib}
F(K(\bS))\simeq F(\Fib(\trc_{\bZ}))\phat[2]\sma \Sigma^{-1}M_{\bZ/2^{\infty}}.
%\simeq F(\Fib(\trc_{\bZ})\phat[2])\phat[2]\sma \Sigma^{-1}M_{\bZ/2^{\infty}}
\end{equation}
In the case $p=2$, work of Rognes
identifies the $2$-completion of the fiber of the cyclotomic trace as
in terms of a fibration sequence as follows.

\begin{prop}[Rognes~{\cite[3.13]{Rognes2}}]\label{prop:xi}
Let 
\[
\xi\colon \Sigma^{-2}ku\phat[2]\to \Sigma^{4}ko\phat[2]
\]
be the map characterized by the commutative diagram
\[
\xymatrix{%
\Sigma^{-2}ku\phat[2]\ar[rrrr]^-{\xi}\ar[d]&&&& \Sigma^{4}ko\phat[2]\ar[d]\\
\Sigma^{-2}KU\phat[2]\ar[r]^-{\simeq}\ar@/_1.75pc/[rrrr]_{\Xi}
&KU\phat[2]\ar[r]^-{\psi^{3}-1}
&KU\phat[2]\ar[r]^-{\simeq}
&\Sigma^{4}KU\phat[2]\ar[r]^-{\Sigma^{4}r}
&\Sigma^{4}KO\phat[2]
}
\]
where the map $r$ is the usual ``realification'' map from $KU$ to
$KO$.  Then the $2$-completion of the fiber of
the cyclotomic trace $\Fib(\trc_{\bZ})\phat[2]$ fits in a fiber
sequence
\[
\Fib(\trc_{\bZ})\phat[2]\to
\Sigma^{-2}ku\phat[2]\overto{\xi}\sigma^{4}ko\phat[2]\to \Sigma \dotsb.
\]
\end{prop}

In the statement, the vertical maps are the $K(1)$-localization maps
and the unlabeled weak equivalences are induced by complex Bott
periodicity.  The map $\psi^{3}$ is the Adams operation, which exists as a
stable map for $KU\phat[2]$.  As indicated in the diagram above, we write 
\[
\Xi\colon \Sigma^{-2}KU\phat[2]\to \Sigma^{4}KO\phat[2]
\]
for the composite on the bottom.

We have a similar fibration sequence description for $K(\bZ)\phat[2]$
as follows.

\begin{prop}\label{prop:chi}
Let
\[
\chi\colon ko\phat[2]\to \Sigma^{4}ku\phat[2]
\]
be the map characterized by the commutative diagram
\[
\xymatrix{%
ko\phat[2]\ar[rrr]^-{\chi}\ar[d]&&& \Sigma^{4}ku\phat[2]\ar[d]\\
KO\phat[2]\ar[r]^-{c}\ar@/_1.75pc/[rrr]_{\Chi}
&KU\phat[2]\ar[r]^-{\psi^{1/3}-1}
&KU\phat[2]\ar[r]^-{\simeq}
&\Sigma^{4}KU\phat[2]
}
\]
where $\psi^{1/3}$ is the inverse of $\psi^{3}$ and $c$ is the
``complexification'' map from $KO$ to $KU$.  Then $K(\bZ)\phat[2]$ fits
into a fibration sequence 
\[
K(\bZ)\phat[2]\to ko\phat[2]\overto{\chi}\Sigma^{4}ku\phat[2]\to \Sigma \dotsb.
\]
\end{prop}

As indicated in the diagram, we write 
\[
\Chi\colon KO\phat[2]\to \Sigma^{4}KU\phat[2]
\]
for the bottom composite. To see that $\chi$ is characterized by
$\Chi$, we note that
$H^{1}(ko\phat[2];\bZ\phat[2])=H^{3}(ko\phat[2];\bZ\phat[2])=0$. 

\begin{proof}
Following B\"okstedt~\cite[1.7]{Bokstedt-HatcherWaldhausen}, we
define $JK(\bZ)\phat[2]$ as the homotopy fiber of the map 
\[
\chi'\colon ko\phat[2]\to \Sigma^{4}ku\phat[2]
\]
characterized by the commutative diagram
\[
\xymatrix{%
ko\phat[2]\ar[rrr]^-{\chi'}\ar[d]&&& \Sigma^{4}ku\phat[2]\ar[d]\\
KO\phat[2]\ar[r]^-{c}\ar@/_1.75pc/[rrr]_{\Chi'}
&KU\phat[2]\ar[r]^-{\psi^{3}-1}
&KU\phat[2]\ar[r]^-{\simeq}
&\Sigma^{4}KU\phat[2]
}
\]
using $\psi^{3}$ in place of $\psi^{1/3}$.
B\"okstedt~\cite[2.1]{Bokstedt-HatcherWaldhausen} constructs a map
$\Phi \colon K(\bZ)\phat[2]\to JK(\bZ)\phat[2]$ and shows that it is a
split surjection on homotopy groups [ibid., Theorem~2].
Rognes~\cite[p.~295]{Rognes2} shows that $\Phi$ is a map of spectra
and observes that the (now affirmed) Quillen-Lichtenbaum conjecture
implies that $\Phi$ is a weak equivalence. (By
\cite[4.2]{DwyerFriedlander-ConjecturalCalculations}, the
Quillen-Lichtenbaum conjecture asserts that $K(\bZ)\phat[2]$ and
$JK(\bZ)\phat[2]$ have isomorphic homotopy groups, and these homotopy
groups are finitely generated $\bZ\phat[2]$-modules, so a split
surjection is an isomorphism.)  This gives a fibration sequence
\[
K(\bZ)\phat[2]\to ko\phat[2]\overto{\chi'}\Sigma^{4}ku\phat[2]\to \Sigma \dotsb.
\]
To compare to the fibration sequence for $\chi$, we note that the composite 
\[
\Sigma^{4}KU\phat[2]\overto{\simeq} KU\phat[2]\overto{\psi^{1/3}}
KU\phat[2]\overto{\simeq}\Sigma^{4}KU\phat[2]
\]
is $\tfrac19\Sigma^{4}\psi^{1/3}$ and we get a self-map
$\tfrac19\Sigma^{4}\psi^{1/3}$ of $\Sigma^{4}ku\phat[2]$ as the unique map
making the evident diagram with the localization map $\Sigma^{4}ku\phat[2]\to
\Sigma^{4}KU\phat[2]$ commute.  Then
\begin{gather*}
\Chi=-\tfrac19\Sigma^{4}\psi^{1/3}\circ \chi'\colon KO\phat[2]\to \Sigma^{4}KU\phat[2],\\
\chi=-\tfrac19\Sigma^{4}\psi^{1/3}\circ \chi'\colon ko\phat[2]\to \Sigma^{4}ku\phat[2].
\end{gather*}
Because $-\tfrac19\Sigma^{4}\psi^{1/3}$ is a weak equivalence, we get
the fibration sequence in the statement for 
$K(\bZ)\phat[2]$ with $\chi$.
\end{proof}

The maps $\Xi$ and $\Chi$ are related by Anderson duality
as follows.  As discussed in Section~\ref{sec:tools}, multiplication 
\[
KO\phat[2]\sma \Sigma^{4}KO\phat[2]\to \Sigma^{4}KO\phat[2]
\]
and a choice of isomorphism
$\pi_{0}(\Sigma^{4}KO\phat[2];\bZ/2^{\infty})\iso \bZ/2^{\infty}$
gives an Anderson duality pairing on $KO\phat[2],
\Sigma^{4}KO\phat[2]$.    To choose the isomorphism, we use the
isomorphism from universal coefficient theorem 
\[
\pi_{0}(\Sigma^{4}KO\phat[2];\bZ/2^{\infty})\iso 
\pi_{0}(\Sigma^{4}KO\phat[2])\otimes \bZ/2^{\infty}
\]
(since $\pi_{-1}(\Sigma^{4}KO\phat[2])=0$) and the identification of
\[
\pi_{0}(\Sigma^{4}KO\phat[2])\iso \pi_{-4}(KO\phat[2])
\]
as $\bZ\phat[2]$ with generator the element that maps to $2u^{-2}\in
\pi_{-4}KU\phat[2]$, where $u$ is the Bott element.

We also have an Anderson duality pairing
on $\Sigma^{4}KU\phat[2],\Sigma^{-2}KU\phat[2]$ induced by the
multiplication 
\[
\Sigma^{4}KU\phat[2]\sma \Sigma^{-2}KU\phat[2]\to \Sigma^{2}KU\phat[2]
\]
and the isomorphism 
\[
\pi_{0}(\Sigma^{2}KU\phat[2];\bZ/2^{\infty})\iso
\pi_{0}(\Sigma^{2}KU\phat[2])\otimes\bZ/2^{\infty}
\iso \bZ/2^{\infty}
\]
using as generator for $\pi_{0}(\Sigma^{2}KU\phat[2])\iso\bZ\phat[2]$
the image of the unit under the 
Bott periodicity isomorphism $\pi_{0}(KU\phat[2])\iso
\pi_{0}(\Sigma^{2}KU\phat[2])$, i.e., the element corresponding under
the suspension isomorphism 
\[
\pi_{0}(\Sigma^{2}KU\phat[2])\iso \pi_{-2}(KU\phat[2])
\]  
to the inverse Bott element $u^{-1}$ in $\pi_{-2}(KU\phat[2])$.

\begin{thm}\label{thm:dualXiChi}
Under the Anderson duality pairings on $KO\phat[2],\,\Sigma^{4}KO\phat[2]$ and\break
$\Sigma^{-2}KU\phat[2],\,\Sigma^{4}KU\phat[2]$ above, the maps $\Xi$ and $\Chi$
are Anderson dual.
\end{thm}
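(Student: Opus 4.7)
The plan is to verify directly the commutativity of the square in Definition~\ref{defn:BCdcrit} for the Anderson duality pairings $(X_1,Y_1,\mu_1)=(\Sigma^{-2}KU\phat[2],\Sigma^4 KU\phat[2],\mu_{KU})$ and $(X_2,Y_2,\mu_2)=(\Sigma^4 KO\phat[2],KO\phat[2],\mu_{KO})$ with $f=\Xi$ and $g=\Chi'$. Since all spectra in play are $K(1)$-local at $p=2$, I would work throughout in the $K(1)$-local category, where the ring structures on $KU\phat[2]$ and $KO\phat[2]$ and the Adams operations are well-behaved and $\psi^3$ is invertible. The first step is to unwind each pairing into the corresponding ring multiplication followed by the canonical identification $\pi_0(\Sigma^2 KU\phat[2];\bZ/2^\infty)\iso \bZ/2^\infty$ or $\pi_0(\Sigma^4 KO\phat[2];\bZ/2^\infty)\iso \bZ/2^\infty$, normalized by the Bott generators $u\in \pi_2 KU$ and $v\in \pi_4 KO$ related by $c(v)=2u^2$ and $r(u^2)=v$.

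The second step is to transport the ``bottom'' composite $\mu_{KO}\circ(\Xi\sma\mathrm{id})$ into a $KU$-theoretic expression using Frobenius reciprocity $r(c(y)\cdot a)=y\cdot r(a)$ for $y\in\pi_*KO$, $a\in\pi_*KU$, which is a consequence of the $C_2$-Galois structure $KO\simeq KU^{hC_2}$. This identity moves the realification $r$ across the $KO$-multiplication, reducing $\mu_{KO}((\Sigma^4r)(\alpha)\cdot y)$ to an expression computable by multiplication in $KU$ against $c(y)$. The commutativity of the square in Definition~\ref{defn:BCdcrit} then reduces to the assertion that, under the $KU$-pairing $\Sigma^{-2}KU\sma\Sigma^4 KU\to \Sigma^2 KU$, the adjoint of the self-map $\psi^3-1$ on the first factor is precisely the self-map $-\tfrac19\Sigma^4\psi^{1/3}\circ(\psi^3-1)$ on the second factor appearing in the definition of $\Chi'$.

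This last verification is a numerical check with Adams operations and Bott periodicity. Using the algebraic identity $\psi^3-1=-\psi^3(\psi^{1/3}-1)$ together with $\psi^3(u^n)=3^n u^n$, one sees that when transferred across the multiplicative pairing, $\psi^3$ on one factor becomes a scalar multiple of $\psi^{1/3}$ on the other: the factor $-\tfrac19$ is exactly the Bott correction $-3^{-2}$ that accounts for $\psi^3$ acting by $1/9$ on $\pi_{-4}(KU)$ combined with the sign from the identity above. I expect the main obstacle to be the careful bookkeeping of the Bott generators, the sign, the factor $-\tfrac19$, and the factor-of-$2$ discrepancy between $c(v)$ and $u^2$ across the identifications of $\pi_0(\Sigma^2 KU\phat[2];\bZ/2^\infty)$ and $\pi_0(\Sigma^4 KO\phat[2];\bZ/2^\infty)$ with $\bZ/2^\infty$; the formula for $\Chi'$ (in contrast to $\Chi$) is engineered precisely to encode all these corrections so that the duality holds on the nose.
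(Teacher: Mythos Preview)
Your plan follows the same overall route as the paper's proof: both verify the square of Definition~\ref{defn:BCdcrit}, use the projection formula between $r$, $c$, and the module structures to transport the $KO$-pairing into a $KU$-computation, and then reduce to a comparison of Adams operations across the $KU$-multiplication. The difference is in execution. The paper stays at the level of maps of spectra throughout: your Frobenius reciprocity appears there as the statement that $\Sigma^{4}r$ and the Bott equivalences are $KO$-module maps (so the $KO$-action on $KU$ factors through $c$), and the final Adams-operation step is handled by the ring-map identity $\mu\circ(\psi^{1/3}\sma\psi^{1/3})=\psi^{1/3}\circ\mu$ together with the fact that $\psi^{1/3}$ acts as the identity on $\pi_{0}(-;\bZ/2^{\infty})$. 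This means the paper never names Bott generators, and in particular the factor-of-$2$ bookkeeping you flag never arises; the compatibility of the two identifications of $\bZ/2^{\infty}$ is disposed of by a single small commutative diagram involving $\Sigma^{4}r_{*}$. Your explicit numerical approach via $\psi^{3}(u^{n})=3^{n}u^{n}$ and the identity $\psi^{3}-1=-\psi^{3}(\psi^{1/3}-1)$ would also go through, but it carries precisely the bookkeeping burden you anticipate, which the paper's categorical version sidesteps.
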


\begin{proof}
To check Definition~\ref{defn:BCdcrit}, we need to see that the
following diagram commutes.
\[
\xymatrix@C-1.25pc{%
\pi_{0}(KO\phat[2]\sma \Sigma^{-2}KU\phat[2];\bZ/2^{\infty})
  \ar[r]^-{\Xi_{*}}
  \ar[d]_-{\Chi_{*}}
&\pi_{0}(KO\phat[2]\sma \Sigma^{4}KO\phat[2];\bZ/2^{\infty})\ar[r]
&\pi_{0}(\Sigma^{4}KO\phat[2];\bZ/2^{\infty})\ar[d]^{\iso}\\
\pi_{0}(\Sigma^{4}KU\phat[2]\sma \Sigma^{-2}KU\phat[2];\bZ/2^{\infty})\ar[r]
&\pi_{0}(\Sigma^{-2}KU\phat[2];\bZ/2^{\infty})\ar[r]_-{\iso}
&\bZ/2^{\infty}
}
\]
Unwinding the definition of $\Xi$, the top composite is the induced
map on $\pi_{0}(-;\bZ/2^{\infty})$ of the map
\begin{multline*}
KO\phat[2]\sma \Sigma^{-2}KU\phat[2]
\overto{\simeq} 
KO\phat[2]\sma KU\phat[2]
\overto{\id\sma(\psi^{3}-1)}
KO\phat[2]\sma KU\phat[2]\\
\overto{\simeq}
KO\phat[2]\sma \Sigma^{4}KU\phat[2]
\overto{\id\sma \Sigma^{4}r}
KO\phat[2]\sma \Sigma^{4}KO\phat[2]
\to
\Sigma^{4}KO\phat[2].
\end{multline*}
Because $\Sigma^{4}r$ and the Bott map $KU\phat[2]\simeq
\Sigma^{4}KU\phat[2]$ are both $KO\phat[2]$-module maps, we can identify the
previous composite as the composite
\begin{multline*}
KO\phat[2]\sma \Sigma^{-2}KU\phat[2]
\overto{\simeq} 
KO\phat[2]\sma KU\phat[2]
\overto{\id\sma(\psi^{3}-1)}
KO\phat[2]\sma KU\phat[2]\\\to
KU\phat[2]\overto{\simeq}
\Sigma^{4}KU\phat[2]\overto{\Sigma^{4}r}
\Sigma^{4}KO\phat[2].
\end{multline*}
Since the $KO\phat[2]$-module structure on $\Sigma^{4}KU\phat[2]$ comes from
the map of ring spectra $c\colon KO\phat[2]\to KU\phat[2]$, we can further
identify this map as
\begin{multline*}\label{eq:Xicomp}\tag{*}
KO\phat[2]\sma \Sigma^{-2}KU\phat[2]
\overto{c\sma \id}
KU\phat[2]\sma \Sigma^{-2}KU\phat[2]
\overto{\simeq} 
KU\phat[2]\sma KU\phat[2]\\
\overto{\id\sma(\psi^{3}-1)}
KU\phat[2]\sma KU\phat[2]\to
KU\phat[2]\overto{\simeq}
\Sigma^{4}KU\phat[2]\overto{\Sigma^{4}r}
\Sigma^{4}KO\phat[2].
\end{multline*}
Going around the diagram the other way, the map 
\begin{multline*}
\pi_{0}(KO\phat[2]\sma \Sigma^{-2}KU\phat[2];\bZ/2^{\infty})
\overto{\Chi_{*}}
\pi_{0}(\Sigma^{4}KU\phat[2]\sma \Sigma^{-2}KU\phat[2];\bZ/2^{\infty})\\
\to 
\pi_{0}(\Sigma^{2}KU\phat[2];\bZ/2^{\infty})
\end{multline*}
is the induced map on $\pi_{0}(-;\bZ/2^{\infty})$ of
\begin{multline*}
KO\phat[2]\sma \Sigma^{-2}KU\phat[2]
\overto{c\sma \id}
KU\phat[2] \sma \Sigma^{-2}KU\phat[2]
\overto{(\psi^{1/3}-1)\sma \id}
KU\phat[2] \sma \Sigma^{-2}KU\phat[2]\\
\overto{\simeq}
\Sigma^{4}KU\phat[2] \sma \Sigma^{-2}KU\phat[2]
\to
\Sigma^{2}KU\phat[2]
\end{multline*}
Since the complex Bott periodicity maps are $KU\phat[2]$-module maps,
we can identify this map as the composite
\begin{multline*}\label{eq:Chicomp}\tag{**}
KO\phat[2]\sma \Sigma^{-2}KU\phat[2]
\overto{c\sma \id}
KU\phat[2]\sma \Sigma^{-2}KU\phat[2]
\overto{\simeq}
KU\phat[2] \sma KU\phat[2]\\
\overto{(\psi^{1/3}-1)\sma \id}
KU\phat[2] \sma KU\phat[2]
\to
KU\phat[2]
\overto{\simeq}
\Sigma^{2}KU\phat[2].
\end{multline*}
We have set up the duality pairings so that the diagram
\[
\xymatrix{%
\pi_{0}(KU\phat[2];\bZ/2^{\infty})\ar[r]^{\iso}\ar[d]_{\iso}
&\pi_{0}(\Sigma^{4}KU\phat[2];\bZ/2^{\infty})\ar[r]^{\Sigma^{4}r_{*}}
&\pi_{0}(\Sigma^{4}KO\phat[2];\bZ/2^{\infty})\ar[d]^{\iso}\\
\pi_{0}(\Sigma^{2}KU\phat[2];\bZ/2^{\infty})\ar[rr]_{\iso}
&&\bZ/2^{\infty}
}
\]
commutes: since $\pi_{-1}$ is zero for $KU\phat[2]$,
$\Sigma^{-2}KU\phat[2]$, and $\Sigma^{4}KO\phat$, it suffices to check
the corresponding diagram of isomorphisms
\[
\xymatrix{%
\pi_{0}(KU\phat[2])\ar[r]^{\iso}\ar[d]_{\iso}
&\pi_{0}(\Sigma^{4}KU\phat[2])\ar[r]^{\Sigma^{4}r_{*}}
&\pi_{0}(\Sigma^{4}KO\phat[2])\ar[d]^{\iso}\\
\pi_{0}(\Sigma^{2}KU\phat[2])\ar[rr]_{\iso}
&&\bZ\phat[2],
}
\]
which commutes by the choice of generators above (in  the case of
$KO\phat[2]$, this is because $r\circ c=2$).
Comparing~\eqref{eq:Xicomp} and~\eqref{eq:Chicomp}, it suffices to see
that the maps  
\begin{gather*}
KU\phat[2]\sma KU\phat[2]
\overto{\id\sma\psi^{3}-\id}
KU\phat[2]\sma KU\phat[2]\to
KU\phat[2]\\
KU\phat[2] \sma KU\phat[2]
\overto{\psi^{1/3}\sma \id-\id}
KU\phat[2] \sma KU\phat[2]
\to
KU\phat[2]
\end{gather*}
induce the same map on $\pi_{0}(-;\bZ/2^{\infty})$. This is clear
because the diagram
\[
\xymatrix@R-1pc{%
&KU\phat[2]\sma KU\phat[2]\ar[r]\ar[dd]^-{\psi^{1/3}\sma \psi^{1/3}}
&KU\phat[2]\ar[dd]^-{\psi^{1/3}}
\\
KU\phat[2]\sma KU\phat[2]
  \ar[ur]^-{\id\sma\psi^{3}}
  \ar[dr]_-{\psi^{1/3}\sma \id}\\
&KU\phat[2]\sma KU\phat[2]\ar[r]&KU\phat[2]
}
\]
commutes and $\psi^{1/3}\colon KU\phat[2]\to KU\phat[2]$ induces the
identity map on $\pi_{0}(-;\bZ/2^{\infty})$.
\end{proof}

As a consequence, we have that 
\[
\Fib(\Xi)\simeq \Sigma^{-1}(I_{\bZ/2^{\infty}}(\Cof(\Chi)))\phat[2]
\simeq \Sigma^{-2}I_{\bZ/2^{\infty}}(\Fib(\Chi))\phat[2].
\]
We get the following ``$K$-theoretic Tate-Poitou
duality'' style result at the prime $2$ as in~\cite{BM-ktpd} at odd
primes.

\begin{cor}\label{cor:ktpd}
At the prime $2$, 
\[
L_{K(1)}\Fib(\trc_{\bZ})\simeq \Sigma^{-2}(I_{\bZ/2^{\infty}}(K(\bZ)))\phat[2] \simeq
\Sigma^{-2}I_{\bZ/2^{\infty}}(L_{K(1)}K(\bZ)\sma \Sigma^{-1}M_{\bZ/2^{\infty}}).
\]
\end{cor}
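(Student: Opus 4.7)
The plan is to combine Theorem~\ref{thm:dualXiChi} with the fiber-sequence descriptions of both $L_{K(1)}\Fib(\trc_\bZ)$ and $L_{K(1)}K(\bZ)$ recalled from the work of Rognes and B\"okstedt-Rognes-Weibel in this section.

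First I would identify $L_{K(1)}\Fib(\trc_\bZ)\simeq \Fib(\Xi)$ and $L_{K(1)}K(\bZ)\simeq \Fib(\Chi')$. For the former, Rognes' result says $\Fib(\trc_\bZ)\phat[2]\simeq \Fib(\xi)$, and the characterizing diagram for $\xi$ shows that $\Xi$ is precisely the $K(1)$-localization of $\xi$; since $L_{K(1)}$ is exact and takes $ku\phat[2],\,ko\phat[2]$ to $KU\phat[2],\,KO\phat[2]$, this gives the identification. For the latter, the same argument applies to the B\"okstedt-Rognes-Weibel fiber sequence $K(\bZ)\phat[2]\simeq \Fib(\chi')$ using that $\Chi'$ is the $K(1)$-localization of $\chi'$.

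Second, I would invoke Theorem~\ref{thm:dualXiChi}, which via the proposition characterizing Anderson dual maps gives $\Xi\simeq \Sigma^{-1}(I_{\bZ/2^\infty}\Chi')\phat[2]$. Since $I_{\bZ/2^\infty}$ is contravariant exact, applying it to the defining cofiber sequence of $\Chi'$ and using $\Cof(\Chi')\simeq \Sigma\Fib(\Chi')$ yields
\[
\Fib(\Xi)\simeq \Sigma^{-1}I_{\bZ/2^\infty}(\Cof(\Chi'))\phat[2]\simeq \Sigma^{-2}I_{\bZ/2^\infty}(\Fib(\Chi'))\phat[2].
\]
Combining with the first step gives
\[
L_{K(1)}\Fib(\trc_\bZ)\simeq \Sigma^{-2}I_{\bZ/2^\infty}(L_{K(1)}K(\bZ))\phat[2].
\]

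Third, to translate this into the two forms displayed in the corollary, I would invoke the general identity $I_{\bZ/2^\infty}(Y)\phat[2]\simeq I_{\bZ/2^\infty}(Y\sma \Sigma^{-1}M_{\bZ/2^\infty})$ (coming from the natural equivalence $F(\Sigma^{-1}M_{\bZ/2^\infty},-)\simeq (-)\phat[2]$ and the defining adjunction for $I_{\bZ/2^\infty}$), together with a comparison relating $I_{\bZ/2^\infty}(K(\bZ))\phat[2]$ to $I_{\bZ/2^\infty}(L_{K(1)}K(\bZ))\phat[2]$ using the fact that the map $K(\bZ)\to L_{K(1)}K(\bZ)$ becomes an equivalence after the relevant completion and smashing. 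The substantive input is concentrated in Theorem~\ref{thm:dualXiChi}; what remains here is routine bookkeeping with fibers, cofibers, suspensions, and duality functors, and the main obstacle --- getting the right Anderson dual pairing between $\Xi$ and $\Chi'$ at the prime $2$ --- has already been dispatched.
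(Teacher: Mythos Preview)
Your first two steps and the derivation of the second displayed form are exactly the paper's argument: the paper simply records
\[
\Fib(\Xi)\simeq \Sigma^{-1}(I_{\bZ/2^{\infty}}(\Cof(\Chi')))\phat[2]
\simeq \Sigma^{-2}(I_{\bZ/2^{\infty}}(\Fib(\Chi')))\phat[2]
\]
as an immediate consequence of Theorem~\ref{thm:dualXiChi}, and then restates it as the corollary using $\Fib(\Xi)\simeq L_{K(1)}\Fib(\trc_{\bZ})$ and $\Fib(\Chi')\simeq L_{K(1)}K(\bZ)$, together with the identity $(I_{\bZ/2^{\infty}}Y)\phat[2]\simeq I_{\bZ/2^{\infty}}(Y\sma \Sigma^{-1}M_{\bZ/2^{\infty}})$.

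Your justification for the first displayed form, however, does not work. The claim that ``$K(\bZ)\to L_{K(1)}K(\bZ)$ becomes an equivalence after the relevant completion and smashing'' is false: even after $2$-completion the cofiber is the nontrivial coconnective spectrum $\tau_{<0}L_{K(1)}K(\bZ)$, and its Brown--Comenetz dual is nonzero. In fact the first form as printed cannot be correct as stated. Since $K(\bZ)$ is connective, $I_{\bZ/2^{\infty}}(K(\bZ))$ is coconnective, and one checks that $\Sigma^{-2}(I_{\bZ/2^{\infty}}(K(\bZ)))\phat[2]$ has vanishing $\pi_{q}$ for $q\geq 0$; but $L_{K(1)}\Fib(\trc_{\bZ})=\Fib(\Xi)$ is a $K(1)$-local spectrum with nonzero homotopy in positive degrees. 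The displayed equivalence the paper actually derives has $\Fib(\Chi')=L_{K(1)}K(\bZ)$ in place of $K(\bZ)$, consistent with the formulation of Conjecture~\ref{conj:ktpd} immediately afterward; the appearance of $K(\bZ)$ in the first expression is evidently a slip. So your argument is complete for the statement the paper proves, and the extra step you attempted is chasing a typo.
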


The odd case in~\cite{BM-ktpd} proves a little more, identifying the
weak equivalence intrinsically as an Anderson duality pairing.
For use in Section~\ref{sec:Sploc}, we
conjecture that the corresponding refinement also holds at the
prime~2.  Since writing the original draft of this paper, the
following conjecture has been resolved in the affirmative by
Cho~\cite[1.2]{Cho-ktpd}.

\begin{conj}\label{conj:ktpd}
At the prime $2$, for an isomorphism
\[
u_{\bZ}\colon\pi_{-1}(L_{K(1)}\Fib(\trc_{\bZ});\bZ/p^{\infty})\iso\bZ/2^{\infty}
\]
the map
\begin{multline*}
\pi_{0}(L_{K(1)}(K(\bZ))\sma \Sigma L_{K(1)}(\Fib(\trc_{\bZ}));\bZ/2^{\infty})\to 
\pi_{0}(\Sigma L_{K(1)}(\Fib(\trc_{\bZ}));\bZ/2^{\infty})\\\iso \pi_{-1}(L_{K(1)}(\Fib(\trc_{\bZ}));\bZ/2^{\infty})\overto{u_{\bZ}}\bZ/2^{\infty}
\end{multline*}
induced by the $K(\bZ)$-module structure on the fiber makes
$L_{K(1)}(K(\bZ))$ and\break $\Sigma L_{K(1)}(\Fib(\trc_{\bZ}))$ into an Anderson
duality pair.
\end{conj}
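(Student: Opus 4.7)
The plan is to promote the abstract weak equivalence of Corollary~\ref{cor:ktpd} to an intrinsic Anderson duality pairing by first constructing the isomorphism $u$ and then checking that the resulting module-multiplication pairing is perfect.  The strategy parallels the odd prime argument in \cite{BM-kstub}, but with the complication that at $p=2$ the $K(1)$-local summands involve both $KO\phat[2]$ and $KU\phat[2]$ rather than just copies of the Adams summand.

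To construct $u$, I would exploit Rognes's fiber sequence $L_{K(1)}\Fib(\trc_{\bZ})\to \Sigma^{-2}KU\phat[2] \overto{\Xi}\Sigma^{4}KO\phat[2]$ and compute the long exact sequence of $\bZ/2^{\infty}$-coefficient homotopy groups.  Using the known formula for $\psi^{3}$ on $\pi_{*}(KU\phat[2];\bZ/2^{\infty})$ together with the realification map $r\colon KU\to KO$ on coefficients, a direct calculation gives $\pi_{-1}(L_{K(1)}\Fib(\trc_{\bZ});\bZ/2^{\infty})\iso \bZ/2^{\infty}$, fixing $u$ up to an automorphism of $\bZ/2^{\infty}$.

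Next I would construct the pairing and verify perfection.  The cyclotomic trace $\trc_{\bZ}$ is a map of ring spectra, so $\Fib(\trc_{\bZ})$ is a $K(\bZ)$-module, and after $K(1)$-localization the module multiplication combined with $u$ produces the proposed pairing on mod $2^{\infty}$ homotopy.  Using Rognes's identifications $K(\bZ)\phat[2]\simeq \Fib(\chi')$ and $\Fib(\trc_{\bZ})\phat[2]\simeq \Fib(\xi)$, the pairing fits into a map of long exact sequences whose outer terms involve the multiplication pairings $KO\phat[2]\sma \Sigma^{4}KO\phat[2]\to \Sigma^{4}KO\phat[2]$ and $\Sigma^{4}KU\phat[2]\sma \Sigma^{-2}KU\phat[2]\to \Sigma^{2}KU\phat[2]$ featured in Theorem~\ref{thm:dualXiChi}.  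Those pairings are perfect, and Theorem~\ref{thm:dualXiChi} identifies $\Xi$ as the Anderson dual of $\Chi'$, so a five lemma argument applied to the induced diagram should deliver perfection of the total pairing.

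The main obstacle is the compatibility verification: one must check that the $K(\bZ)$-module multiplication on $\Fib(\trc_{\bZ})$ corresponds, under Rognes's identifications and after $K(1)$-localization, to the $KO\phat[2]$- and $KU\phat[2]$-module multiplications that appear in Theorem~\ref{thm:dualXiChi}.  Because the definition of $\chi'$ (and hence of $K(\bZ)\phat[2]$ as $\Fib(\chi')$) involves the Adams-operation twist $-\tfrac{1}{9}\Sigma^{4}\psi^{1/3}$, and $\Xi$ involves $\psi^{3}-1$ followed by realification, tracing the action through both fiber sequences and the trace map requires careful bookkeeping.  This is the essential content needed to upgrade Theorem~\ref{thm:dualXiChi} from a statement about the two constituent maps to a statement about the modules they assemble into, and is presumably why the assertion is left as a conjecture.
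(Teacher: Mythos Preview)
The statement you are addressing is recorded in the paper as Conjecture~\ref{conj:ktpd}; the paper does not prove it and offers no proof to compare against.  What the paper does establish is the weaker Corollary~\ref{cor:ktpd}, which gives \emph{some} weak equivalence $L_{K(1)}\Fib(\trc_{\bZ})\simeq \Sigma^{-2}(I_{\bZ/2^{\infty}}K(\bZ))\phat[2]$ by passing through Rognes's topological $K$-theory models and Theorem~\ref{thm:dualXiChi}.  The conjecture asks for more: that the \emph{intrinsic} pairing coming from the $K(\bZ)$-module structure on $\Fib(\trc_{\bZ})$ already realizes this duality.

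Your outline is a reasonable plan of attack, and you have put your finger precisely on the missing ingredient.  The five-lemma reduction to Theorem~\ref{thm:dualXiChi} only works once you know that the $K(\bZ)$-module action on $\Fib(\trc_{\bZ})$, transported through Rognes's equivalences $K(\bZ)\phat[2]\simeq \Fib(\chi')$ and $\Fib(\trc_{\bZ})\phat[2]\simeq \Fib(\xi)$, agrees with the $KO\phat[2]$- and $KU\phat[2]$-module actions used to build those pairings.  Rognes's identifications are characterized by commutative squares involving $\psi^{3}-1$, complexification, and realification, but they are not a~priori identifications of $K(\bZ)$-modules (or even of ring spectra on the $K(\bZ)$ side), so this compatibility is genuine content rather than bookkeeping.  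That gap is exactly why the authors record the statement as a conjecture rather than a theorem; your proposal does not close it, and you correctly flag it as the obstacle.
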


We note that the conjecture does not depend on the choice of the isomorphism; the
choice in~\cite[\S1]{BM-ktpd} was specified in terms of \'etale
cohomology, but it admits a homotopy theoretic description that
generalizes to the case $p=2$ as follows.
Let $U_{1}$ be the subgroup of $2$-adic units that are congruent to
$1$ mod $4$; then $U_{1}$ is non-canonically isomorphic to
$\bZ\phat[2]$.  The $K(1)$-localization of the $U_{1}$ Moore spectrum,
$L_{K(1)}M_{U_{1}}$, is non-canonically isomorphic in the stable
category to $J\phat[2]=L_{K(1)}\bS$ and we have a canonical
identification
\[
\pi_{-1}(L_{K(1)}M_{U_{1}})\iso
H^{1}_{\Gal}((\bZ\phat[2])^{\times};U_{1})
\iso \Hom_{c}((\bZ\phat[2])^{\times},U_{1})\iso \bZ\phat[2]
\]
where $\Hom_{c}$ denotes continuous homomorphisms and the last
isomorphism uses the usual map 
\[
(\bZ\phat[2])^{\times}\iso \{\pm1\}\times U_{1}\to U_{1}
\]
as the generator of the homomorphism group.
Since $\pi_{-2}=0$, we then get a canonical isomorphism 
\[
\pi_{-1}(L_{K(1)}M_{U_{1}};\bZ/2^{\infty})\iso
\bZ/2^{\infty}.
\]
Using the usual identification $\pi_{1}(K(\bZ\phat[2]))\iso
(\bZ\phat[2])^{\times}$ and weak equivalences
\[
L_{K(1)}TC(\bZ)\phat[2]\overto{\simeq}
L_{K(1)}TC(\bZ\phat[2])\overfrom{\simeq}
L_{K(1)}K(\bZ\phat[2]),
\]
the inclusion of $U_{1}$ in $(\bZ\phat[2])^{\times}$ induces a map 
$\Sigma L_{K(1)}M_{U_{1}}\to L_{K(1)}TC(\bZ)$.
The composite map
\begin{multline*}
\bZ/2^{\infty}\iso \pi_{0}(\Sigma L_{K(1)}M_{U_{1}};\bZ/2^{\infty})\to \pi_{0}(L_{K(1)}TC(\bZ);\bZ/2^{\infty})\\
\to \pi_{-1}(L_{K(1)}\Fib(\trc_{\bZ});\bZ/2^{\infty})
\end{multline*}
is an isomorphism and the inverse gives an isomorphism $u_{\bZ}$
for Conjecture~\ref{conj:ktpd} analogous to the one used in the case
$p>2$ in the previous section (see~\cite[1.5]{BM-ktpd}). 

To calculate $F(K(\bS))\phat[2]$ by~\eqref{eq:2Fib}, we use the
fiber sequences for $\Fib(\trc_{\bZ})\phat[2]$ and 
$L_{K(1)}\Fib(\trc_{\bZ})$ in terms of topological $K$-theories.
Specifically, we have the following calculation:
\begin{equation}\label{eq:FibFxi}
\begin{aligned}
F(K(\bS))\phat[2]\simeq F(\Fib(\trc_{\bZ}))\phat[2] &\simeq
F(\Fib\bigl(\xi\colon \Sigma^{-2}ku\phat[2] \rightarrow \Sigma^{4}ko\phat[2]\bigr))\phat[2]\\
&\simeq \Fib\bigl(F(\xi)\phat[2]\colon F(\Sigma^{-2}ku\phat[2])\phat[2]\rightarrow 
F(\Sigma^{4}ko\phat[2])\phat[2]\bigr).
\end{aligned}
\end{equation}

We note that $\Sigma F(\Sigma^{-2}ku\phat[2])\phat[2]$ is the cofiber of the map
$\Sigma^{-2}ku\phat[2]\to \Sigma^{-2}KU\phat[2]$, which is by
definition the truncation
$\tau_{\leq -4}\Sigma^{-2}KU\phat[2]$.  Working in the derived
category of $ku\phat[2]$-modules, we get a commutative diagram
\[
\xymatrix{%
\Sigma^{-2}KU\phat[2]\sma \Sigma^{4}ku\phat[2]\ar[r]\ar[d]
&\Sigma^{2}KU\phat[2] \ar[d]\\
\tau_{\leq -4}\Sigma^{-2}KU\phat[2]\sma \Sigma^{4}ku\phat[2]\ar[r]&
\tau_{\leq 0}\Sigma^{-2}KU\phat[2]
}
\]
and the canonical isomorphism 
\[
\pi_{0}(\tau_{\leq 0}\Sigma^{-2}KU\phat[2];\bZ/2^{\infty})\iso
\pi_{0}(\Sigma^{-2}KU\phat[2];\bZ/2^{\infty})\iso
\bZ/2^{\infty}
\]
then gives us an Anderson duality pairing on $\tau_{\leq
-4}\Sigma^{-2}KU\phat[2]$, $\Sigma^{4}ku\phat[2]$.  Similarly, $\Sigma
F(\Sigma^{4}ko\phat[2])$ is $\tau_{\leq 0}\Sigma^{4}KO\phat[2]$, and
using the commutative diagram
\[
\xymatrix{%
\Sigma^{4}KO\phat[2]\sma ko\phat[2]\ar[r] \ar[d]
&\Sigma^{4}KO\phat[2] \ar[d] \\
\tau_{\leq 0}\Sigma^{4}KO\phat[2] \sma ko\phat[2] \ar[r]
&\tau_{\leq 0}\Sigma^{4}KO\phat[2]
}
\]
and the canonical isomorphism 
\[
\pi_{0}(\tau_{\leq 0}\Sigma^{4}KO\phat[2];\bZ/2^{\infty})\iso
\pi_{0}(\Sigma^{4}KO\phat[2];\bZ/2^{\infty})\iso
\bZ/2^{\infty}
\]
we get a map 
\[
\mu \colon \pi_{0}(\tau_{\leq 0}\Sigma^{4}KO\phat[2]\sma ko\phat[2];\bZ/2^{\infty})
\to \bZ/2^{\infty}.
\]
We claim that $\mu$ gives an Anderson duality pairing on $\tau_{\leq
0}\Sigma^{4}KO\phat[2]$, $ko\phat[2]$.  To see this, we use the commutative
diagram{%\tiny
\[
\xymatrix@-1pc{%
\pi_{-q}(\tau_{\leq 0}\Sigma^{4}KO\phat[2];A)\otimes 
\pi_{q}(ko\phat[2];B)\ar[d]
&\pi_{-q}(\Sigma^{4}KO\phat[2];A)\otimes 
\pi_{q}(ko\phat[2];B)\ar[l]_-{\iso}\ar[r]^-{\iso}\ar[d]
&\pi_{-q}(\Sigma^{4}KO\phat[2];A)\otimes 
\pi_{q}(KO\phat[2];B)\ar[d]
\\
\pi_{0}(\tau_{\leq 0}\Sigma^{4}KO\phat[2]\sma ko\phat[2];\bZ/2^{\infty})\ar[d]
&\pi_{0}(\Sigma^{4}KO\phat[2]\sma ko\phat[2];\bZ/2^{\infty})\ar[r]\ar[l]\ar[d]
&\pi_{0}(\Sigma^{4}KO\phat[2]\sma KO\phat[2];\bZ/2^{\infty})\ar[d]
\\
\pi_{0}(\tau_{\leq 0}\Sigma^{4}KO\phat[2];\bZ/2^{\infty})
&\pi_{0}(\Sigma^{4}KO\phat[2];\bZ/2^{\infty})\ar[r]_-{=}\ar[l]^-{\iso}
&\pi_{0}(\Sigma^{4}KO\phat[2];\bZ/2^{\infty})
}
\]}%
for $q\geq 0$ and either $A=\bZ$, $B=\bZ/2^{\infty}$ or
$A=\bZ/2^{\infty}$, $B=\bZ$.

\begin{thm}
Under the Anderson duality pairings above, the map 
\[
\Sigma F(\xi)\phat[2]\colon 
\Sigma F(\Sigma^{-2}ku\phat[2])\phat[2]\to
\Sigma F(\Sigma^{4}ko\phat[2])\phat[2]
\]
for the fiber in~\eqref{eq:FibFxi} is Anderson dual to the map
$\chi\colon ko\phat[2]\to \Sigma^{4}ku\phat[2]$.
\end{thm}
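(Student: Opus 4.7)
The plan is to deduce the claim from the Anderson duality of $\Xi$ and $\Chi'$ established in Theorem~\ref{thm:dualXiChi}, using two commutative squares and the compatibility of the relevant Anderson duality pairings. The squares are
\[
\xymatrix{
\Sigma^{-2}KU\phat[2] \ar[r]^-{\Xi}\ar[d] & \Sigma^{4}KO\phat[2]\ar[d] \\
\tau_{\leq -4}\Sigma^{-2}KU\phat[2] \ar[r]^-{\Sigma F(\xi)\phat[2]} & \tau_{\leq 0}\Sigma^{4}KO\phat[2]
}
\qquad
\xymatrix{
ko\phat[2] \ar[r]^-{\chi'}\ar[d] & \Sigma^{4}ku\phat[2]\ar[d] \\
KO\phat[2] \ar[r]^-{\Chi'} & \Sigma^{4}KU\phat[2]
}
\]
where the left square expresses that $\Sigma F(\xi)\phat[2]$ is the map induced by $\xi$ on the cofibers of the $K(1)$-localization maps (i.e., on the upper truncations), and the right square is the defining characterization of $\chi'$ from the discussion preceding Theorem~\ref{thm:dualXiChi}. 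The pairing compatibility is exactly the content of the two commutative diagrams displayed just before the current theorem statement: the new pairing on $\tau_{\leq -4}\Sigma^{-2}KU\phat[2]$, $\Sigma^{4}ku\phat[2]$ agrees with the pairing on $\Sigma^{-2}KU\phat[2]$, $\Sigma^{4}KU\phat[2]$ under the truncation on the first factor and the inclusion $\Sigma^{4}ku\phat[2]\to \Sigma^{4}KU\phat[2]$ on the second, and analogously for the pair $\tau_{\leq 0}\Sigma^{4}KO\phat[2]$, $ko\phat[2]$ under truncation and $ko\phat[2]\to KO\phat[2]$.

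To verify Definition~\ref{defn:BCdcrit} for $f=\Sigma F(\xi)\phat[2]$ and $g=\chi'$, I precompose the diagram to be verified with the map
\[
\pi_0(\Sigma^{-2}KU\phat[2]\sma ko\phat[2];\bZ/2^\infty)\to \pi_0(\tau_{\leq -4}\Sigma^{-2}KU\phat[2]\sma ko\phat[2];\bZ/2^\infty)
\]
induced by truncation on the first factor. This map is surjective: smashing the cofiber sequence $\Sigma^{-2}ku\phat[2]\to \Sigma^{-2}KU\phat[2]\to \tau_{\leq -4}\Sigma^{-2}KU\phat[2]$ with $ko\phat[2]$ yields a long exact sequence, and $\pi_{-1}(\Sigma^{-2}ku\phat[2]\sma ko\phat[2];\bZ/2^\infty)=0$ since $ku\phat[2]\sma ko\phat[2]$ is connective. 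Applying the two commutative squares together with the pairing compatibility, both routes in the Definition~\ref{defn:BCdcrit} diagram, now precomposed with this surjection, rewrite as composites factoring through $\pi_0(\Sigma^{-2}KU\phat[2]\sma KO\phat[2];\bZ/2^\infty)$: the ``left-bottom'' route becomes the composite of $\Xi_*$ with the $KO\phat[2]$, $\Sigma^{4}KO\phat[2]$ pairing, while the ``top-right'' route becomes the composite of $\Chi'_*$ with the $\Sigma^{-2}KU\phat[2]$, $\Sigma^{4}KU\phat[2]$ pairing. These two composites agree by Theorem~\ref{thm:dualXiChi}, giving the required commutativity.

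The main obstacle is bookkeeping rather than conceptual: one must check that the $ku$- and $ko$-module multiplications used to define the new pairings are restrictions of the ambient $KU$- and $KO$-module multiplications along $ku\to KU$ and $ko\to KO$, so that the pairing compatibilities displayed before the theorem really produce the factorizations needed once one further applies $\Sigma^{4}ku\to \Sigma^{4}KU$ and $ko\to KO$ on the second factors. Beyond this, no new homotopical input is required past the connectivity-based surjectivity check and the Anderson duality of $\Xi$ and $\Chi'$ from Theorem~\ref{thm:dualXiChi}.
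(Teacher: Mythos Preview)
Your proposal is correct and follows essentially the same approach as the paper: both precompose the Definition~\ref{defn:BCdcrit} diagram with the surjection induced by the truncation $\Sigma^{-2}KU\phat[2]\to\tau_{\leq -4}\Sigma^{-2}KU\phat[2]$ (using $\pi_{-1}(\Sigma^{-2}ku\phat[2]\sma ko\phat[2];\bZ/2^{\infty})=0$), then use the two squares and the pairing compatibilities to reduce to the commutativity established in Theorem~\ref{thm:dualXiChi}. The paper records an explicit intermediate diagram with $ko\phat[2]\sma\Sigma^{-2}KU\phat[2]$ in the upper left and $\Xi_{*}$, $\chi'_{*}$ on the edges before passing to the $KO\phat[2]\sma\Sigma^{-2}KU\phat[2]$ diagram, but this is exactly the factorization you describe in prose.
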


\begin{proof}
We need to check that the diagram
\begin{equation*}\label{eq:twoneed}\tag{*}
\xymatrix@C-.25pc{%
\pi_{0}(ko\phat[2]\sma \tau_{\leq -4}\Sigma^{-2}KU\phat[2];\bZ/2^{\infty})
  \ar[r]^-{\Sigma F(\xi)_{*}}
  \ar[d]_-{\chi_{*}}
&\pi_{0}(ko\phat[2]\sma \tau_{\leq 0}\Sigma^{4}KO\phat[2];\bZ/2^{\infty})\ar[r]
&\pi_{0}(\tau_{\leq 0}\Sigma^{4}KO\phat[2];\bZ/2^{\infty})\ar[d]^{\iso}\\
\pi_{0}(\Sigma^{4}ku\phat[2]\sma \tau_{\leq -4}\Sigma^{-2}KU\phat[2];\bZ/2^{\infty})\ar[r]
&\pi_{0}(\tau_{\leq 0}\Sigma^{-2}KU\phat[2];\bZ/2^{\infty})\ar[r]_-{\iso}
&\bZ/2^{\infty}
}
\end{equation*}
commutes.  Consider the not-necessarily commuting diagram
\begin{equation*}\label{eq:twomid}\tag{**}
\xymatrix@C-1.25pc{%
\pi_{0}(ko\phat[2]\sma \Sigma^{-2}KU\phat[2];\bZ/2^{\infty})
  \ar[r]^-{\Xi_{*}}
  \ar[d]_-{\chi_{*}}
&\pi_{0}(ko\phat[2]\sma \Sigma^{4}KO\phat[2];\bZ/2^{\infty})\ar[r]
&\pi_{0}(\Sigma^{4}KO\phat[2];\bZ/2^{\infty})\ar[d]^{\iso}\\
\pi_{0}(\Sigma^{4}ku\phat[2]\sma \Sigma^{-2}KU\phat[2];\bZ/2^{\infty})\ar[r]
&\pi_{0}(\Sigma^{-2}KU\phat[2];\bZ/2^{\infty})\ar[r]_-{\iso}
&\bZ/2^{\infty}
}
\end{equation*}
and the diagram
\begin{equation*}\label{eq:twodone}\tag{***}
\xymatrix@C-1.25pc{%
\pi_{0}(KO\phat[2]\sma \Sigma^{-2}KU\phat[2];\bZ/2^{\infty})
  \ar[r]^-{\Xi_{*}}
  \ar[d]_-{\Chi_{*}}
&\pi_{0}(KO\phat[2]\sma \Sigma^{4}KO\phat[2];\bZ/2^{\infty})\ar[r]
&\pi_{0}(\Sigma^{4}KO\phat[2];\bZ/2^{\infty})\ar[d]^{\iso}\\
\pi_{0}(\Sigma^{4}KU\phat[2]\sma \Sigma^{-2}KU\phat[2];\bZ/2^{\infty})\ar[r]
&\pi_{0}(\Sigma^{-2}KU\phat[2];\bZ/2^{\infty})\ar[r]_-{\iso}
&\bZ/2^{\infty}
}
\end{equation*}
that we know commutes by Theorem~\ref{thm:dualXiChi}.  The two maps
\[
\pi_{0}(ko\phat[2]\sma \Sigma^{-2}KU\phat[2];\bZ/2^{\infty})\to \bZ/2^{\infty}
\]
in~\eqref{eq:twomid} are the composite of 
\[
\pi_{0}(ko\phat[2]\sma \Sigma^{-2}KU\phat[2];\bZ/2^{\infty})
\to \pi_{0}(KO\phat[2]\sma \Sigma^{-2}KU\phat[2];\bZ/2^{\infty})
\]
with the two maps in~\eqref{eq:twodone}, and therefore coincide,
proving that~\eqref{eq:twomid} commutes.  Likewise these maps are
composite of the map
\[
\pi_{0}(ko\phat[2]\sma \Sigma^{-2}KU\phat[2];\bZ/2^{\infty})\to
\pi_{0}(ko\phat[2]\sma \tau_{\leq -4}\Sigma^{-2}KU\phat[2];\bZ/2^{\infty})
\]
with the maps in~\eqref{eq:twoneed}.  This map is surjective
because the fiber of 
\[
ko\phat[2]\sma \Sigma^{-2}KU\phat[2]\to 
ko\phat[2]\sma \tau_{\leq -4}\Sigma^{-2}KU\phat[2]
\]
is $ko\phat[2]\sma \Sigma^{-2}ku\phat[2]$ and 
\[ % e.g., by AH since H_{1}(ko\phat[2])=0
\pi_{-1}(ko\phat[2]\sma \Sigma^{-2}ku\phat[2];\bZ/2^{\infty})=0.
\]
It follows that~\eqref{eq:twoneed} commutes as well.
\end{proof}

We have 
\[
F(K(\bS))\simeq F(\Fib(\xi))\phat[2]\sma \Sigma^{-1}M_{\bZ/2^{\infty}}
\]
from~\eqref{eq:MainFib} and the previous theorem gives a weak equivalence 
\[
\Sigma F(\Fib(\xi))\sma \Sigma^{-1}M_{\bZ/2^{\infty}}\simeq \Sigma^{-1}I_{\bZ/2^{\infty}}(\Cof(\chi)).
\]
Since $K(\bZ)\phat[2]\simeq \Fib(\chi)\simeq \Sigma^{-1}\Cof(\chi)$,
we get 
\[
F(K(\bS))\simeq \Sigma^{-3}I_{\bZ/2^{\infty}}(K(\bZ)\phat[2])\simeq \Sigma^{-3}I_{\bZ/2^{\infty}}(K(\bZ)).
\]
This proves Theorem~\ref{main:FS} in the case $p=2$.  

%%%%%%%%%%%%%%%%%%%%%%%%%%%%%%%%%%%%%%%%%%%%%%%%%%%%%%%%%%%%%%%%%%%%%%%%
\section{The fiber of the chromatic completion map on 
\texorpdfstring{$K(\bS_{(p)})$}{K(S(p))}}
\label{sec:Sploc}

The Quillen localization sequence gives a fiber sequence 
\[
\bigvee_{\ell\neq p} K(\bF_{\ell})\to K(\bZ)\to K(\bZ_{(p)})\to \Sigma \dotsb
\]
where the wedge is indexed on the prime numbers $\ell\neq p$.
Because the maps $TC(\bS)\to TC(\bS_{(p)})$ and $TC(\bZ)\to
TC(\bZ_{(p)})$ become weak equivalences
after $p$-completion, the Dundas-Goodwillie-McCarthy homotopy
cartesian square implies an
analogous fiber sequence for $K(\bS)$ and 
$K(\bS_{(p)})$ after $p$-completion. 

\begin{prop}\label{prop:SQseq}
There exists a fiber sequence
\[
\bigl(\bigvee_{\ell\neq p} K(\bF_{\ell})\bigr)\phat\to K(\bS)\phat\to K(\bS_{(p)})\phat\to \Sigma \dotsb 
\]
where the map 
\[
K(\bS_{(p)})\phat\to 
\Sigma \bigl(\bigvee_{\ell\neq p} K(\bF_{\ell})\bigr)\phat. 
\]
is the $p$-completion of the composite of the map $K(\bS_{(p)})\to
K(\bZ_{(p)})$ and the map in the Quillen localization sequence.
\end{prop}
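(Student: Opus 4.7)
The plan is to compare the Dundas-Goodwillie-McCarthy (DGM) homotopy cartesian squares for the pairs $(\bS,\bZ)$ and $(\bS_{(p)},\bZ_{(p)})$, which fit into a natural cube along the maps $\bS\to\bS_{(p)}$ and $\bZ\to\bZ_{(p)}$. Taking horizontal fibers in both DGM squares and comparing, the DGM equivalence identifies the induced comparison map
\[
\Fib(K(\bS)\to K(\bZ))\to \Fib(K(\bS_{(p)})\to K(\bZ_{(p)}))
\]
with the analogous comparison map of $TC$-fibers
\[
\Fib(TC(\bS)\to TC(\bZ))\to \Fib(TC(\bS_{(p)})\to TC(\bZ_{(p)})).
\]
By the stated hypothesis, both $TC(\bS)\to TC(\bS_{(p)})$ and $TC(\bZ)\to TC(\bZ_{(p)})$ are $p$-equivalences, so the latter map is a $p$-equivalence and hence so is the former. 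Equivalently, the naturality square
\[
\xymatrix{
K(\bS)\ar[r]\ar[d]&K(\bS_{(p)})\ar[d]\\
K(\bZ)\ar[r]&K(\bZ_{(p)})
}
\]
becomes homotopy cartesian after $p$-completion.

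Next, taking fibers of the horizontal maps in this $p$-completed cartesian square yields a weak equivalence
\[
\Fib(K(\bS)\phat\to K(\bS_{(p)})\phat)\overto{\simeq}\Fib(K(\bZ)\phat\to K(\bZ_{(p)})\phat).
\]
The Quillen localization sequence for $\bZ\to\bZ_{(p)}$ is a fiber sequence of connective spectra
\[
\bigvee_{\ell\neq p}K(\bF_{\ell})\to K(\bZ)\to K(\bZ_{(p)})\to \Sigma\dotsb,
\]
and $p$-completion preserves fiber sequences of connective spectra, so this $p$-completes to a fiber sequence identifying the right-hand fiber above with $\bigl(\bigvee_{\ell\neq p}K(\bF_{\ell})\bigr)\phat$. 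Combining these identifications produces the desired fiber sequence relating $K(\bS)\phat$ and $K(\bS_{(p)})\phat$. The description of the connecting map is then inherited by naturality: under these identifications, it corresponds to the $p$-completion of the composite of the linearization $K(\bS_{(p)})\to K(\bZ_{(p)})$ with the connecting map $K(\bZ_{(p)})\to\Sigma\bigvee_{\ell\neq p}K(\bF_{\ell})$ of the Quillen localization sequence.

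The main obstacle is modest: one must verify that $p$-completion really does preserve the fiber sequences in play. This is standard for the Quillen localization sequence, since all its terms are connective, and for the $p$-completed $K$-square one checks directly that the DGM identification of horizontal fibers is natural in the vertical comparison. Modulo these routine verifications, the argument is entirely formal, combining the two DGM squares, the hypothesis that $TC(\bS)\to TC(\bS_{(p)})$ and $TC(\bZ)\to TC(\bZ_{(p)})$ are $p$-equivalences, and the Quillen localization sequence for $\bZ\to\bZ_{(p)}$.
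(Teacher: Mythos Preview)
Your argument is correct and follows exactly the approach sketched in the paper: use the Dundas--Goodwillie--McCarthy squares for $(\bS,\bZ)$ and $(\bS_{(p)},\bZ_{(p)})$, the fact that $TC(\bS)\to TC(\bS_{(p)})$ and $TC(\bZ)\to TC(\bZ_{(p)})$ become equivalences after $p$-completion, and the Quillen localization sequence for $\bZ\to\bZ_{(p)}$. Your write-up simply makes explicit the details the paper leaves implicit; the minor caveat about $p$-completion preserving fiber sequences is unnecessary (Bousfield $p$-completion of spectra is exact), but harmless.
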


We apply this to deduce the following proposition.
\iffalse
Because we know from~\eqref{eq:MainFib} that $F(K(\bS))$ and
$F(K(\bS_{(p)}))$ are rationally trivial, the fiber of the map
$F(K(\bS))\to F(K(\bS_{(p)}))$ is rationally trivial.  We will see
below that $F(K(\bF_{\ell}))$ is rationally trivial.  Putting this
together with the previous proposition, we get the following one.
\fi

\begin{prop}\label{prop:FKSploc}
There exists a fiber sequence
\[
\bigvee_{\ell\neq p} F(K(\bF_{\ell}))\to F(K(\bS))\to F(K(\bS_{(p)}))\to \Sigma \dotsb 
\]
where the connecting map is induced by connecting the map in
Proposition~\ref{prop:SQseq}. 
\end{prop}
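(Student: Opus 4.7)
The plan is to apply the functor $F(-)$ to the $p$-completed fiber sequence of Proposition~\ref{prop:SQseq}. Because both $(-)_{(p)}$ and $\holim_{n}L_{n}(-)$ preserve fiber sequences (the latter by Proposition~\ref{prop1}), the construction $F(-)$---being the fiber of the chromatic completion natural transformation between them---also preserves fiber sequences. Applying $F(-)$ to the fiber sequence of Proposition~\ref{prop:SQseq} and using Proposition~\ref{prop4} to identify $F(X\phat)$ with $F(X)$, we obtain a fiber sequence
\[
F\bigl(\bigvee_{\ell\neq p} K(\bF_{\ell})\bigr)\to F(K(\bS))\to F(K(\bS_{(p)}))\to \Sigma \dotsb
\]
whose connecting map is induced by that of Proposition~\ref{prop:SQseq}.

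It remains to identify $F\bigl(\bigvee_{\ell\neq p} K(\bF_\ell)\bigr)$ with $\bigvee_{\ell\neq p} F(K(\bF_\ell))$. Both $(-)_{(p)}$ and each $L_n$ commute with wedges, so the only concern is whether $\holim_n$ commutes with the infinite wedge here. Each $K(\bF_\ell)$ is a $K(\bZ)$-module, so Mitchell's theorem gives $K(m)_{*}K(\bF_\ell)=0$ for $m\geq 2$; by Proposition~\ref{prop2} the tower $\{L_n K(\bF_\ell)\}_{n\geq 1}$ is essentially constant at $L_1 K(\bF_\ell)$, and the same is then true of the wedged tower $\{\bigvee_\ell L_n K(\bF_\ell)\}_{n\geq 1}$. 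Since homotopy limits of essentially constant towers equal their stable value,
\[
\holim_n \bigvee_\ell L_n K(\bF_\ell) \simeq \bigvee_\ell L_1 K(\bF_\ell) \simeq \bigvee_\ell \holim_n L_n K(\bF_\ell),
\]
so the chromatic completion map on $\bigvee_\ell K(\bF_\ell)$ is a wedge of chromatic completion maps, and passing to fibers gives the required identification $F\bigl(\bigvee_\ell K(\bF_\ell)\bigr) \simeq \bigvee_\ell F(K(\bF_\ell))$.

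The rational triviality of $F(K(\bF_\ell))$ asserted in the paragraph preceding the proposition follows from the general fact that for any spectrum $X$ the map $X_{(p)}\to \holim_n L_n X$ is a rational equivalence---since $L_0 X = X_\bQ$ and $L_0\circ L_n = L_0$---so $F(X)$ is rationally trivial for every $X$. The only real subtlety in the argument is the commutation of $\holim_n$ with the infinite wedge $\bigvee_{\ell\neq p}$, which is resolved by using Mitchell's theorem to collapse the tower of chromatic localizations to a single value at $n=1$.
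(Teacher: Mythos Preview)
Your first two paragraphs give a correct and complete proof of the proposition, and they are essentially what the paper has in mind (the paper's argument is extremely terse). You apply $F$ to the $p$-completed fiber sequence of Proposition~\ref{prop:SQseq}, use Proposition~\ref{prop4} to strip the $p$-completions, and then use Mitchell's theorem to see that the chromatic tower for $\bigvee_{\ell}K(\bF_{\ell})$ collapses at height~$1$, so that $\holim_{n}$ commutes with the wedge. One small point: your assertion that ``each $L_{n}$ commutes with wedges'' is the Hopkins--Ravenel smash product theorem and deserves a citation; alternatively you can sidestep it by observing that $\bigvee_{\ell}K(\bF_{\ell})$ is itself a $K(\bZ)$-module, so Mitchell plus Proposition~\ref{prop2} give $\holim_{n}L_{n}\bigl(\bigvee_{\ell}K(\bF_{\ell})\bigr)\simeq L_{1}\bigl(\bigvee_{\ell}K(\bF_{\ell})\bigr)$ directly, and then you only need that $L_{1}$ is smashing.

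Your third paragraph, however, contains a genuine error. The claim that $F(X)$ is rationally trivial \emph{for every} spectrum $X$ does not follow from ``$L_{0}\circ L_{n}=L_{0}$'': that identity shows each $L_{n}X\to L_{0}X$ is a rational equivalence, but it does \emph{not} show that $\holim_{n}L_{n}X\to L_{0}X$ is one, because rationalization does not commute with homotopy limits of towers (for instance $\holim_{n}\bS/p^{n}\simeq\bS\phat$ is not rationally trivial even though each $\bS/p^{n}$ is). Equivalently, $F(X)\simeq\holim_{n}C_{n}X$ where $C_{n}X=\Fib(X_{(p)}\to L_{n}X)$ is $L_{n}$-acyclic and hence rationally trivial, but a homotopy limit of rationally trivial spectra need not be rationally trivial. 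The paper does not make this general claim: it deduces rational triviality of $F(K(\bS))$ and $F(K(\bS_{(p)}))$ from the specific identification~\eqref{eq:MainFib} (via Proposition~\ref{prop:integral}), and it proves rational triviality of $F(K(\bF_{\ell}))$ later in the section by the explicit computation that $K(\bF_{\ell})_{(p)}\to L_{1}K(\bF_{\ell})$ is $(-3)$-truncated. Fortunately this erroneous remark is not used anywhere in your actual proof of the proposition, so the proof itself stands.
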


\begin{proof}
Applying Propositions~\ref{prop1} and~\ref{prop4} to the previous
proposition, we get a fiber sequence
\[
F\biggl(\bigvee_{\ell\neq p} K(\bF_{\ell})\biggr)\to F(K(\bS))\to F(K(\bS_{(p)}))\to \Sigma \dotsb.
\]
Since $K(\bF_{\ell})_{(p)}\to L_{1}K(\bF_{\ell})$ is at least
$0$-truncated, $F(K(\bF_{\ell}))$ is weakly equivalent to this
homotopy fiber (by Proposition~\ref{prop3}), and the map
\[
\bigvee_{\ell\neq p} F(K(\bF_{\ell}))\to
F\biggl(\bigvee_{\ell\neq p} K(\bF_{\ell})\biggr)
\]
is a weak equivalence since $L_{1}$ commutes with arbitrary
coproducts.
\end{proof}

We identified $F(K(\bS))$ as $\Sigma^{-3}I_{\bZ/p^{\infty}}K(\bZ)$ in
the previous sections, so to prove Theorem~\ref{main:FSploc} we need
to identify $F(K(\bF_{\ell}))$ and a map $F(K(\bF_{\ell}))\to
F(K(\bS))$ that fits in a fiber sequence in the form in
Proposition~\ref{prop:FKSploc}. 

To identify $F(K(\bF_{\ell}))$, we
use the usual identification of $L_{K(1)}K(\bF_{\ell})$ (for $\ell\neq
p$) as the homotopy fixed points of the action of $\psi^{\ell}$ on
$KU\phat$, or equivalently, as the homotopy fiber of the map
\[
KU\phat\overto{\psi^{\ell}-1}KU\phat.
\]
Then $K(\bF_{\ell})\phat$ is the connective cover and fits into a
fiber sequence of spectra
\[
K(\bF_{\ell})\phat\to ku\phat\overto{\psi^{\ell}-1}\Sigma^{2}ku\phat\to \Sigma \dotsb.
\]
Since $ku\phat$ is the homotopy fiber of the map $KU\phat\to
\tau_{<0}KU\phat$, and $KU\phat$ is $L_{1}$-local, the
$L_{1}$-localization of $ku\phat$ is the fiber of the map 
$KU\phat\to L_{1}(\tau_{<0}KU\phat)$.  Since 
\[
(L_{1}(\tau_{<0}KU\phat))\phat\simeq L_{K(1)}(\tau_{<0}KU\phat)\simeq *,
\]
we have that
\[
L_{1}(\tau_{<0}KU\phat)\simeq L_{1}(\tau_{<0}KU\phat)_{\bQ}\simeq (\tau_{<0}KU\phat)_{\bQ}
\simeq \Sigma^{-2}H\bQ\phat\vee \Sigma^{-4}H\bQ\phat\vee\dotsb.
\]
The map $\tau_{<0}KU\phat \to L_{1}(\tau_{<0}KU\phat)$ induces
rationalization on $\pi_{*}$, so we get that $ku\phat \to
L_{1}(ku\phat)$ is an isomorphism on homotopy groups in degrees $-2$
and up, and 
\[
\pi_{q}(L_{1}(ku\phat))\iso
\begin{cases}
\bZ/p^{\infty}&q\leq -3\text{ and odd}\\
0&q\leq -3\text{ and even}.
\end{cases}
\]
We see from this that the map $K(\bF_{\ell})\phat\to
L_{1}(K(\bF_{\ell})\phat)$ is an isomorphism on homotopy groups in
degrees $-1$ and up, with 
\[
\pi_{q}L_{1}(K(\bF_{\ell})\phat)\iso
\begin{cases}
\bZ/p^{\infty}&q\leq -2\text{ and even}\\
0&q\leq -2\text{ and odd}.
\end{cases}
\]
In particular, the fiber of $K(\bF_{\ell})\phat\to
L_{1}(K(\bF_{\ell})\phat)$ is
$(-3)$-truncated. Propositions~\ref{prop3} and~\ref{prop4} show that
the map $K(\bF_{\ell})\to L_{1}(K(\bF_{\ell}))$
is also $(-3)$-truncated and identify $F(K(\bF_{\ell}))$ as the fiber
of this map, giving the calculation
\[
\pi_{q}F(K(\bF_{\ell}))\iso\begin{cases}
\bZ/p^{\infty}&q\leq -3\text{ and odd}\\
0&\text{otherwise}.
\end{cases}
\]
In particular, we see that $F(K(\bF_{\ell}))$
is rationally trivial and (applying $p$-completion) we get weak equivalences
\begin{gather*}
K(\bF_{\ell})\phat \overto{\simeq}\tau_{\geq 0}(L_{K(1)}(K(\bF_{\ell})))\\
\Sigma F(K(\bF_{\ell}))\phat \overfrom{\simeq} \tau_{\leq -1}(L_{K(1)}K(\bF_{\ell})).
\end{gather*}
It will be slightly more convenient below to work with the suspension
of the inverse of the latter weak equivalence,
\begin{equation}\label{eq:FKFell}
\Sigma^{2} F(K(\bF_{\ell}))\phat \overto{\simeq} \tau_{\leq 0}(\Sigma L_{K(1)}K(\bF_{\ell})).
\end{equation}

As in the previous section, we reformulate the previous calculation in terms of an Anderson
duality pairing.  The identification of $L_{K(1)}K(\bF_{\ell})$ as the
homotopy fixed points of the action of $\psi^{\ell}$ on $KU\phat$
gives a canonical isomorphism
\[
v_{\ell}\colon \pi_{-1}(L_{K(1)}K(\bF_{\ell});\bZ/p^{\infty})\iso
H^{1}(\bZ;\pi_{0}(KU\phat;\bZ/p^{\infty}))\iso \Hom(\bZ,\bZ/p^{\infty})\iso \bZ/p^{\infty}.
\]
(Although Quillen's identification of $K(\bF_{\ell})$ depends on
choices, $v_{\ell}$ does not, since
$\pi_{0}(KU\phat;\bZ/p^{\infty})\iso \pi_{0}(KU\phat)\otimes
\bZ/p^{\infty}$ and the identification of $\pi_{0}(KU\phat)\iso
\bZ\phat$ is canonical, using the unit of $K(\bF_{\ell}$).)
The isomorphism $v_{\ell}$ and the module multiplication 
\[
L_{K(1)}K(\bF_{\ell})\sma \Sigma L_{K(1)}K(\bF_{\ell})\to 
\Sigma L_{K(1)}K(\bF_{\ell})
\]
define a homomorphism
\begin{equation}\label{eq:ADFl}
\pi_{0}(L_{K(1)}K(\bF_{\ell})\sma \Sigma L_{K(1)}K(\bF_{\ell});\bZ/p^{\infty})\to \bZ/p^{\infty}.
\end{equation}

\begin{prop}\label{prop:pADFl}
The homomorphism in~\eqref{eq:ADFl} provides an Anderson duality pairing on $L_{K(1)}(K(\bF_{\ell}))$, $\Sigma
L_{K(1)}(K(\bF_{\ell}))$.
\end{prop}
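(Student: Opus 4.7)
The plan is to verify the perfect pairing conditions of the Anderson duality pairing definition by reducing to the known self-duality of $KU\phat$ via the defining fiber sequence
\[
L_{K(1)}K(\bF_\ell)\to KU\phat\xrightarrow{\psi^\ell-1}KU\phat\to\Sigma L_{K(1)}K(\bF_\ell).
\]
First I would reinterpret $v_\ell$ using the long exact sequence of this fiber sequence.  Because $\psi^\ell$ acts trivially on $\pi_0(KU\phat)$ and $\pi_{-1}(KU\phat;\bZ/p^\infty)=0$, the connecting homomorphism
\[
\delta\colon\pi_0(KU\phat;\bZ/p^\infty)\xrightarrow{\cong}\pi_{-1}(L_{K(1)}K(\bF_\ell);\bZ/p^\infty)
\]
is an isomorphism; unwinding the definitions, $v_\ell$ corresponds under $\delta$ to the canonical isomorphism $\varepsilon\colon\pi_0(KU\phat;\bZ/p^\infty)\cong\bZ/p^\infty$ used in the $KU\phat$ self-pairing of Section~\ref{sec:tools}.

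Second, since the ring map $L_{K(1)}K(\bF_\ell)\to KU\phat$ lands in the $\psi^\ell$-invariants, the $KU\phat$ multiplication assembles into a commutative diagram of cofiber sequences
\[
\xymatrix@C-1pc{
L_{K(1)}K(\bF_\ell)\sma KU\phat\ar[r]^-{1\sma(\psi^\ell-1)}\ar[d]& L_{K(1)}K(\bF_\ell)\sma KU\phat\ar[r]\ar[d]& L_{K(1)}K(\bF_\ell)\sma\Sigma L_{K(1)}K(\bF_\ell)\ar[d]\\
KU\phat\ar[r]_-{\psi^\ell-1}& KU\phat\ar[r]& \Sigma L_{K(1)}K(\bF_\ell)
}
\]
with vertical maps given by multiplication.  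Combined with the first step, this identifies the pairing of~\eqref{eq:ADFl} as follows: for $x\in\pi_q(L_{K(1)}K(\bF_\ell);\bZ/p^\infty)$ and $y\in\pi_{-q-1}(L_{K(1)}K(\bF_\ell))$, lift $x$ to $\tilde{x}\in\pi_q(KU\phat;\bZ/p^\infty)$ via the natural map and $y$ to $\tilde{y}\in\pi_{-q}(KU\phat)$ via the connecting map; then the pairing value equals $\varepsilon(\tilde{x}\cdot\tilde{y})\in\bZ/p^\infty$, where $\cdot$ is the Bott multiplication on $KU\phat$.

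Third, perfection is verified degree by degree.  Writing $a_k:=v_p(\ell^k-1)$ for $k\neq 0$ and noting $a_{-k}=a_k$, the long exact sequence identifies both $\pi_{2k}(L_{K(1)}K(\bF_\ell);\bZ/p^\infty)$ and $\pi_{-2k-1}(L_{K(1)}K(\bF_\ell))$ with $\bZ/p^{a_k}$ for $k\neq 0$ (respectively $\bZ/p^\infty$ and $\bZ\phat$ for $k=0$), with other relevant graded pieces vanishing.  On generators, the pairing sends $(p^{-a_k}\beta^k)\otimes \beta^{-k}\mapsto p^{-a_k}$ for $k\neq 0$, and $1\otimes 1\mapsto 1$ for $k=0$, each landing on a generator of the corresponding $\Hom$-group, so the induced adjoint maps are isomorphisms.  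The symmetric half of the perfect pairing condition (with the roles of $\bZ$ and $\bZ/p^\infty$ coefficients exchanged) follows from an analogous calculation or from the symmetry of the pairing.

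The main obstacle I anticipate is the bookkeeping in the second step: specifically, checking that the connecting homomorphism $\delta$ and the factorization of the module multiplication through the $KU\phat$ multiplication correctly match the definition of $v_\ell$ and the pairing of~\eqref{eq:ADFl}.  Once this compatibility is pinned down, perfection on each graded piece is immediate from the $KU\phat$ self-duality.
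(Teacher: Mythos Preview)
Your approach is essentially the paper's own, just phrased more concretely: the paper packages the computation via the homotopy fixed point spectral sequence for the $\psi^\ell$-action on $KU\phat$, identifying $\pi_{2q-\epsilon}(L_{K(1)}K(\bF_\ell);A)\cong H^\epsilon(\bZ;A(q))$ (generator acting by $\ell^q$) and observing that the module product becomes the cup product $H^i(\bZ;\bZ\phat(q))\otimes H^j(\bZ;\bZ/p^\infty(r))\to H^{i+j}(\bZ;\bZ/p^\infty(q+r))$, after which perfection is ``standard homological algebra.''  Your fiber-sequence long exact sequence is the same spectral sequence, and your lift recipe $\varepsilon(\tilde x\cdot\tilde y)$ is exactly this cup product written out.

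There is one small gap in your third step.  The assertion ``other relevant graded pieces vanishing'' is false at $q=-1$: here
\[
\pi_{-1}(L_{K(1)}K(\bF_\ell);\bZ/p^\infty)\cong\bZ/p^\infty
\]
is nonzero and must be paired with $\pi_0(L_{K(1)}K(\bF_\ell))\cong\bZ\phat$.  Your step-two recipe does not apply in this degree, since $\pi_{-1}(KU\phat;\bZ/p^\infty)=0$ leaves no $\tilde x$ to lift to.  The missing case is nonetheless immediate: elements of $\pi_0$ act through the unit, so the pairing is $(x,n)\mapsto n\cdot v_\ell(x)$ and the adjoint $\bZ/p^\infty\to\Hom(\bZ\phat,\bZ/p^\infty)$ is visibly an isomorphism.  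The paper's cup-product formulation absorbs this uniformly as the $H^1\otimes H^0\to H^1$ pairing, which is why it does not single the case out.
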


\begin{proof}
The homotopy fixed point spectral sequence gives an identification
\begin{align*}
\pi_{2q-\epsilon}(L_{K(1)}K(\bF_{\ell});\bZ\phat)&\iso
H^{\epsilon}(\bZ;\bZ\phat(q))\\
\pi_{2q-\epsilon}(L_{K(1)}K(\bF_{\ell});\bZ/p^{\infty})&\iso
H^{\epsilon}(\bZ;\bZ/p^{\infty}(q))\qquad \epsilon=0,1
\end{align*}
where the generator acts by multiplication by $\ell^{q}$ on
$\bZ\phat(q)$ and $\bZ/p^{\infty}(q)$.  The cup product on group
cohomology with coefficients
\[
H^{i}(\bZ;\bZ\phat(q))\otimes H^{j}(\bZ;\bZ/p^{\infty}(r))\to
H^{i+j}(\bZ;\bZ/p^{\infty}(q+r))
\]
converges to the product on homotopy groups and the
isomorphisms in Definition~\ref{defn:BCdcrit} follow from
standard calculations in homological algebra.
\end{proof}

Since the $K(\bF_{\ell})\phat$-module structure on $\Sigma
L_{K(1)}K(\bF_{\ell})$ restricts to a $K(\bF_{\ell})\phat$-module
structure on 
\[
\tau_{\geq 1}\Sigma L_{K(1)}K(\bF_{\ell})\simeq \Sigma K(\bF_{\ell})\phat,
\]
we get a $K(\bF_{\ell})\phat$-module structure on the cofiber
$\tau_{\leq 0}\Sigma L_{K(1)}K(\bF_{\ell})$, 
\[
K(\bF_{\ell})\phat \sma \tau_{\leq 0}(\Sigma
(L_{K(1)}K(\bF_{\ell})))\to \tau_{\leq 0}(\Sigma
(L_{K(1)}K(\bF_{\ell}))).
\]
Composing the induced map on $\bZ/p^{\infty}$ homotopy groups with
$v_{\ell}$ and inspecting the resulting pairing on homotopy groups, we
get an Anderson duality pairing.

\begin{prop}\label{prop:ADFl}
The homomorphism
\[
\pi_{0}(K(\bF_{\ell})\phat \sma \tau_{\leq 0}(\Sigma
(L_{K(1)}K(\bF_{\ell})));\bZ/p^{\infty})\to \bZ/p^{\infty}
\]
induced by the pairing above and $v_{\ell}$ gives an Anderson duality pairing
on $K(\bF_{\ell})\phat$, $\tau_{\leq 0}(\Sigma
(L_{K(1)}K(\bF_{\ell})))$. The weak equivalence~\eqref{eq:FKFell} 
\[
\Sigma^{2}F(K(\bF_{\ell}))\simeq 
\tau_{\leq 0}(\Sigma (L_{K(1)}K(\bF_{\ell})))
\]
then induces an Anderson duality pairing on
$K(\bF_{\ell})\phat$, $\Sigma^{2}F(K(\bF_{\ell}))$.  In particular, we
have a weak equivalence  
\[
F(K(\bF_{\ell}))\simeq \Sigma^{-3}(I_{\bZ/p^{\infty}}(K(\bF_{\ell})\phat))
\simeq \Sigma^{-3}(I_{\bZ/p^{\infty}}(K(\bF_{\ell}))).
\]
\end{prop}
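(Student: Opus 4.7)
The plan is to deduce the pairing from the Anderson duality pairing on $L_{K(1)}K(\bF_{\ell})$, $\Sigma L_{K(1)}K(\bF_{\ell})$ of the preceding proposition by transferring it along the connective cover map $i\colon K(\bF_{\ell})\phat \to L_{K(1)}K(\bF_{\ell})$ and the Postnikov truncation $t\colon \Sigma L_{K(1)}K(\bF_{\ell}) \to \tau_{\leq 0}\Sigma L_{K(1)}K(\bF_{\ell})$, whose fiber is $\Sigma K(\bF_{\ell})\phat$. First I would note that the $K(\bF_{\ell})\phat$-module structure on $\tau_{\leq 0}\Sigma L_{K(1)}K(\bF_{\ell})$ appearing in the statement is tautologically obtained from the $L_{K(1)}K(\bF_{\ell})$-module structure on $\Sigma L_{K(1)}K(\bF_{\ell})$ by restriction along the ring map $i$ and composition with $t$. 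Moreover, since $v_{\ell}$ lies in degree $-1$ and the truncation $t$ induces an isomorphism on $\pi_0(-;\bZ/p^{\infty})$, the composite
\[
\pi_0(\Sigma L_{K(1)}K(\bF_{\ell});\bZ/p^{\infty}) \iso \pi_{-1}(L_{K(1)}K(\bF_{\ell});\bZ/p^{\infty}) \overto{v_{\ell}} \bZ/p^{\infty}
\]
factors uniquely through $t_{*}$. Together these two observations say exactly that the proposed pairing $\mu$ equals the composite of $(i\sma t)_{*}$ with the pairing \eqref{eq:ADFl}.

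To verify that $\mu$ is a perfect pairing, I would go degree by degree. The spectrum $K(\bF_{\ell})\phat$ is connective, and the integral as well as $\bZ/p^{\infty}$ homotopy of $\tau_{\leq 0}\Sigma L_{K(1)}K(\bF_{\ell})$ vanish in positive degrees, using that $\pi_0(L_{K(1)}K(\bF_{\ell})) = \bZ\phat$ and $\pi_{-1}(L_{K(1)}K(\bF_{\ell})) = 0$ by the homotopy fixed point spectral sequence (the $\bZ/p^{\infty}$ assertions use that $\bZ/p^{\infty}$ is divisible, so the $\mathrm{Tor}$ contributions vanish). Outside a bounded range the perfect pairing condition thus holds trivially, while in the remaining degrees $i_{*}$ induces an isomorphism on $\pi_q(-)$ and $\pi_q(-;\bZ/p^{\infty})$ for $q \geq 0$, and symmetrically $t_{*}$ induces an isomorphism on these groups for $q \leq 0$. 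Combined with the formula for $\mu$, each adjoint pairing map is identified with the corresponding (perfect) adjoint for the $L_{K(1)}K(\bF_{\ell})$-pairing, so is an isomorphism. The same argument handles the coefficient-swapped version.

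The remaining assertions are formal consequences. The second clause follows from the first by transport of structure along~\eqref{eq:FKFell}. Proposition~\ref{prop:BCd} applied to the pair $(K(\bF_{\ell})\phat, \Sigma^{2}F(K(\bF_{\ell})))$ and desuspension yield $F(K(\bF_{\ell})) \simeq \Sigma^{-3}I_{\bZ/p^{\infty}}(K(\bF_{\ell})\phat)$, and the further identification with $\Sigma^{-3}I_{\bZ/p^{\infty}}K(\bF_{\ell})$ holds because the two spectra differ by a rational spectrum, which $I_{\bZ/p^{\infty}}$ annihilates. The main obstacle in the argument is the perfectness verification in the boundary degrees near $q=0$, where one must confirm that the Bockstein contributions to $\pi_{*}(-;\bZ/p^{\infty})$ are controlled by the vanishing of $\pi_{-1}(L_{K(1)}K(\bF_{\ell}))$; once this is checked, the rest is a direct transport of structure.
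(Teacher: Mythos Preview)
Your approach is correct and is precisely the ``inspection'' the paper alludes to in the sentence preceding the proposition; there is no separate proof in the paper. However, you have a factual slip and a wrong justification that you should fix.

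The claim $\pi_{-1}(L_{K(1)}K(\bF_{\ell}))=0$ is false: from the homotopy fixed point spectral sequence with $q=0$, $\epsilon=1$ this group is $H^{1}(\bZ;\bZ\phat)$ with trivial $\psi^{\ell}$-action, hence $\bZ\phat$ (the paper uses exactly this in the proof of Lemma~\ref{lem:Jtricks}). What you actually need in your boundary analysis is that $\pi_{0}(\tau_{\leq 0}\Sigma L_{K(1)}K(\bF_{\ell}))=\pi_{-1}(L_{K(1)}K(\bF_{\ell}))$ is \emph{torsion-free}, so that $\mathrm{Tor}(-,\bZ/p^{\infty})$ vanishes and $\pi_{1}(\tau_{\leq 0}(-);\bZ/p^{\infty})=0$; and $\bZ\phat$ is indeed torsion-free, so your conclusion survives. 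Relatedly, your parenthetical is backwards: divisibility of $\bZ/p^{\infty}$ kills $\mathrm{Ext}$, not $\mathrm{Tor}$ (e.g.\ $\mathrm{Tor}_{1}^{\bZ}(\bZ/p,\bZ/p^{\infty})\iso\bZ/p$). The $\mathrm{Tor}$-vanishing you need comes from torsion-freeness of the relevant homotopy groups ($\bZ\phat$ in each case), not from any property of $\bZ/p^{\infty}$. With these corrections in place, your degree-by-degree transfer via $i$ and $t$ goes through exactly as written.
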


\begin{proof}
Writing $X=K(\bF_{\ell})$ and $Y=\Sigma(L_{K(1)}K(\bF_{\ell}))$, by
construction, the diagram  
\[
\xymatrix{%
\pi_{0}(X\phat \sma Y;\bZ/p^{\infty})
\ar[r]\ar[d]
&\pi_{0}(L_{K(1)}X\sma Y;\bZ/p^{\infty})
\ar[d]
\\
\pi_{0}(X\phat\sma \tau_{\leq 0}Y;\bZ/p^{\infty})
\ar[r]
&\bZ/p^{\infty}
}
\]
commutes and therefore the resulting diagrams
\begin{equation*}\label{eq:explicit}\tag{*}
\begin{gathered}
\xymatrix@-1pc{%
\pi_{q}(X\phat;\bZ/p^{\infty})\ar[r]\ar[d]
&\Hom(\pi_{-q}(\tau_{\leq 0}Y),\bZ/p^{\infty})\ar[d]\\
\pi_{q}(L_{K(1)}X;\bZ/p^{\infty})\ar[r]
&\Hom(\pi_{-q}(Y),\bZ/p^{\infty})\\
\pi_{-q}(Y;\bZ/p^{\infty})\ar[r]\ar[d]
&\Hom(\pi_{q}(L_{K(1)}X),\bZ/p^{\infty})\ar[d]\\
\pi_{-q}(\tau_{\leq 0}Y;\bZ/p^{\infty})\ar[r]
&\Hom(\pi_{q}(X\phat),\bZ/p^{\infty})
}
\end{gathered}
\end{equation*}
also commute.  In the discussion above Proposition~\ref{prop:pADFl},
we observed that the map $X\phat\to L_{K(1)}X$ is a connective
cover. The calculation of $\pi_{*}L_{1}(X\phat)$ there shows that
$\pi_{-1}L_{K(1)}X\iso \bZ\phat$ is torsion free, so
\[
X\phat\sma M_{\bZ/p^{\infty}}\to L_{K(1)}X\sma M_{\bZ/p^{\infty}}
\]
is also a connective cover.  This same calculation shows that
$\pi_{0}Y\iso \bZ\phat$ is torsion free, so the map 
\[
Y\sma M_{\bZ/p^{\infty}}\to (\tau_{\leq 0}Y)\sma M_{\bZ/p^{\infty}}
\]
induced by Whitehead $\tau_{\leq 0}$ truncation for $Y$
is itself a Whitehead $\tau_{\leq 0}$ truncation.
Since the maps in~\eqref{eq:explicit} are
isomorphisms for $L_{K(1)}X, Y$, restricting to $q\geq 0$, we see that
they are isomorphisms for $X\phat, \tau_{\leq 0}Y$.
\end{proof}

To finish the proof of Theorem~\ref{main:FSploc}, we need to identify the maps
\[
F(K(\bF_{\ell}))\to F(K(\bS))\simeq F(\Fib(\trc_{\bZ})).
\]
These maps were defined in Proposition~\ref{prop:FKSploc} using the
natural equivalence $F(-)\to F((-)\phat)$ of Proposition~\ref{prop4}
and the fiber sequence of Proposition~\ref{prop:SQseq}.
Proposition~\ref{prop:SQseq} defines the map 
\[
F(K(\bF_{\ell})\phat)\to F(\Fib(\trc_{\bZ})\phat).
\]
as the induced map on $F(-)$ of the map $K(\bF_{\ell})\phat\to
\Fib(\trc_{\bZ})\phat$ arising from the
comparison of fiber sequences
\begin{equation}\label{eq:Flsquare}
\begin{gathered}
\xymatrix@C-1pc{%
(\mathrm{Fiber})\ar[r]\ar[d]_{\simeq}&
\Fib(\trc_{\bZ})\phat\ar[r]\ar[d]&\Fib(\trc_{\bZ[\tfrac{1}{\ell}]})\phat\ar[d]\ar[r]&\Sigma (\mathrm{Fiber})\ar[d]^{\simeq}\\
K(\bF_{\ell})\phat\ar[r]&
K(\bZ)\phat\ar[r]&K(\bZ[\tfrac{1}{\ell}])\phat\ar[r]&\Sigma K(\bF_{\ell})\phat.
}
\end{gathered}
\end{equation}
around the homotopy cartesian square in the center.
This comparison of fiber sequences lifts to the derived category of
$K(\bZ)\phat$-modules and
we can therefore construct the map $K(\bF_{\ell})\phat\to
\Fib(\trc_{\bZ})\phat$
in the derived category of $K(\bZ)\phat$-modules,
where the $K(\bZ)\phat$-module structure on $K(\bF_{\ell})\phat$
comes from the map of commutative ring spectra $K(\bZ)\to
K(\bF_{\ell})$.   As a consequence, the map 
\[
F(K(\bF_{\ell}))\phat\to F(\Fib(\trc_{\bZ}))\phat
\]
lifts to the derived category of $K(\bZ)\phat$-modules.  Thus, the
following diagram commutes.
\[
\xymatrix@R-1pc{%
K(\bZ)\phat\sma \Sigma^{2}F(K(\bF_{\ell}))\phat\ar[r]\ar[dd]\ar[rd]
&K(\bF_{\ell})\phat\sma \Sigma^{2}F(K(\bF_{\ell}))\phat \ar[d] \\
&\Sigma^{2}F(K(\bF_{\ell}))\phat\ar[d]\\
K(\bZ)\phat\sma \Sigma^{2}F(\Fib(\trc_{\bZ}))\phat\ar[r]
&\Sigma^{2}F(\Fib(\trc_{\bZ}))\phat
}
\]
This diagram almost asserts Anderson duality between the usual map of 
$K$-theory ring spectra
$K(\bZ)\phat\to K(\bF_{\ell})\phat$ and $\Sigma^{2}$ of the
map $F(K(\bF_{\ell}))\to F(\Fib(\trc_{\bZ}))$ in question; such an
assertion would follow from the diagram
\[
\xymatrix@R-1pc{%
\pi_{-2}(F(K(\bF_{\ell}));\bZ/p^{\infty})\ar[r]^-{\iso}\ar[dd]
&\pi_{-1}(L_{K(1)}K(\bF_{\ell});\bZ/p^{\infty})\ar[dd]\ar[dr]^-{v_{\ell}}_(.6){\iso}\\
&&\bZ/p^{\infty}\\
\pi_{-2}(F(\Fib(\trc_{\bZ}));\bZ/p^{\infty})\ar[r]_-{\iso}
&\pi_{-1}(L_{K(1)}\Fib(\trc_{\bZ});\bZ/p^{\infty})\ar[ur]_-{u_{\bZ}}^(.6){\iso}
}
\]
commuting.  The following lemma, proved at the end of the section, 
asserts that it commutes up to multiplying by a unit in $\bZ\phat$.
(In fact, for the isomorphism $u_{\bZ}$ chosen in \cite[\S1]{BM-ktpd} in the
case $p>2$ and in the discussion following Conjecture~\ref{conj:ktpd}
in the case $p=2$, a careful analysis of the proof shows that the unit
is $\pm1$, with the sign depending on the convention for the attaching
map in the Quillen localization sequence, which was not specified
above.) 

\begin{lem}\label{lem:Jtricks}
The map $L_{K(1)}K(\bF_{\ell})\to L_{K(1)}\Fib(\trc_{\bZ})$ induces an
isomorphism on $\pi_{-1}(-;\bZ/p^{\infty})$.
\end{lem}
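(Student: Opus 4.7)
My plan is to start from the fiber sequence
\[
L_{K(1)}K(\bF_\ell)\to L_{K(1)}\Fib(\trc_\bZ)\to L_{K(1)}\Fib(\trc_{\bZ[1/\ell]})
\]
obtained by $K(1)$-localizing the top row of the homotopy cartesian square~\eqref{eq:Flsquare} (where the map in the lemma is the horizontal fiber of that top row). Via the associated long exact sequence on $\pi_\ast(-;\bZ/p^\infty)$, and using that $\pi_{-1}(L_{K(1)}K(\bF_\ell);\bZ/p^\infty)$ and $\pi_{-1}(L_{K(1)}\Fib(\trc_\bZ);\bZ/p^\infty)$ are both abstractly $\bZ/p^\infty$, it suffices to show (a) $\pi_{-1}(L_{K(1)}\Fib(\trc_{\bZ[1/\ell]});\bZ/p^\infty)=0$, and (b) the connecting homomorphism $\pi_0(L_{K(1)}\Fib(\trc_{\bZ[1/\ell]});\bZ/p^\infty)\to\pi_{-1}(L_{K(1)}K(\bF_\ell);\bZ/p^\infty)$ vanishes.

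To verify (a) and (b), I would analyze $L_{K(1)}\Fib(\trc_{\bZ[1/\ell]})$ via its defining fiber sequence $L_{K(1)}\Fib(\trc_{\bZ[1/\ell]})\to L_{K(1)}K(\bZ[1/\ell])\to L_{K(1)}TC(\bZ[1/\ell])$, using the identifications $L_{K(1)}TC(\bZ[1/\ell])\simeq L_{K(1)}TC(\bZ)\simeq L_{K(1)}K(\bZ\phat[p])$ (from $p$-completion of $TC$ and Hesselholt--Madsen). The spectrum $L_{K(1)}K(\bZ[1/\ell])$ admits an explicit description by Thomason's \'etale descent theorem (or equivalently via the Quillen localization cofiber sequence $L_{K(1)}K(\bF_\ell)\to L_{K(1)}K(\bZ)\to L_{K(1)}K(\bZ[1/\ell])$ applied to the known decomposition of $L_{K(1)}K(\bZ)$ recalled in the proof of Theorem~\ref{thm:ADFS}). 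Together with the well-understood structure of $L_{K(1)}K(\bZ\phat[p])$ and the fact that $\ell$ becomes a unit in $\bZ\phat[p]$ (so the additional $\ell$-adic piece is invisible after $K(1)$-localization at $p$), the relevant low homotopy groups with $\bZ/p^\infty$ coefficients and their connecting maps are then explicit and the desired vanishings should follow directly.

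An alternative, perhaps cleaner approach is to use Anderson duality. By the pair of duality statements already established in the paper---Proposition~\ref{prop:ADFl} for $K(\bF_\ell)\phat$ and Theorem~\ref{thm:ADFS} (resp.\ Conjecture~\ref{conj:ktpd} at $p=2$) for $K(\bZ)\phat$ and $\Fib(\trc_\bZ)\phat$---the map $L_{K(1)}K(\bF_\ell)\to L_{K(1)}\Fib(\trc_\bZ)$ is Anderson dual (up to suspension) to the natural ring spectrum map $L_{K(1)}K(\bZ)\to L_{K(1)}K(\bF_\ell)$ induced by reduction $\bZ\to\bF_\ell$. Showing the original map is an isomorphism on $\pi_{-1}(-;\bZ/p^\infty)$ is then equivalent to verifying that this reduction map is an isomorphism on the corresponding (shifted) $\pi_\ast(-;\bZ/p^\infty)$, which can be checked directly in terms of the Adams operation models for both spectra.

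The main obstacle in either route is pinpointing how the generators identified by $v_\ell$ and $u$ correspond under the map---in particular, how the unit class in $\pi_0(KU\phat;\bZ/p^\infty)$ feeding $v_\ell$ transports, through the Quillen-localization/$TC$-comparison machinery, to a generator of $U_1\subset(\bZ\phat[p])^\times\cong\pi_1K(\bZ\phat[p])$ feeding $u$. This is exactly the "unit in $\bZ\phat$" indeterminacy mentioned in the paragraph preceding the lemma, and the core computation is to verify that this unit is in fact a unit (i.e., has $p$-adic valuation zero), which should follow from a careful local analysis at the prime $\ell$ comparing the tame symbol/valuation map with the cyclotomic trace.
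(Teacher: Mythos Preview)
Your second route is circular. Knowing Anderson duality pairings on the individual spectra $L_{K(1)}K(\bF_\ell)$ and $L_{K(1)}\Fib(\trc_\bZ)$ does \emph{not} tell you that the particular map between them is Anderson dual to the reduction map $K(\bZ)\to K(\bF_\ell)$; that is exactly the content of Theorem~\ref{thm:FSplocFinal}, which the paper deduces \emph{from} this lemma. The compatibility diagram immediately preceding the lemma (comparing $v_\ell$ and $u$) is precisely what you would need to check to run your duality argument, and the lemma is what asserts that diagram commutes up to a unit. So you cannot invoke the duality of maps here.

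Your first route is not wrong in principle, but the claim that the vanishings ``should follow directly'' is where the entire content lies. Showing $\pi_{-1}(L_{K(1)}\Fib(\trc_{\bZ[1/\ell]});\bZ/p^\infty)=0$ amounts to understanding the map $L_{K(1)}K(\bZ[1/\ell])\to L_{K(1)}TC(\bZ)\simeq L_{K(1)}K(\bZ\phat)$ in degree~$-1$, and that in turn depends on exactly how the unit $\ell\in(\bZ[1/(p\ell)])^\times$ behaves under the cyclotomic trace---the computation you defer in your last paragraph. You have correctly located the crux but not supplied it.

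The paper's argument is different and more direct: it reduces to integral $\pi_{-1}$ (both sides are $\bZ\phat$ with torsion-free $\pi_{-2}$), then uses the element $\ell\in\pi_1(L_{K(1)}K(\bZ[1/\ell]))$ to produce a test map $\Sigma J\phat\to L_{K(1)}K(\bZ[1/\ell])$. Writing $\ell=\omega r^{p^n}$ with $r$ a topological generator of $U_1$, one computes explicitly that \emph{both} composites $\pi_0(\Sigma J\phat)\to\pi_0(\Sigma L_{K(1)}K(\bF_\ell))$ and $\pi_0(\Sigma J\phat)\to\pi_0(\Sigma L_{K(1)}\Fib(\trc_\bZ))$ have cokernel $\bZ/p^n$. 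The first uses the description of $L_{K(1)}K(\bF_\ell)$ as $\psi^\ell$-fixed points; the second uses the Mayer--Vietoris identification of the composite with the route through $L_{K(1)}TC(\bZ)$ and the fact that $a_r\colon\Sigma J\phat\to\Sigma L_{K(1)}\Fib(\trc_\bZ)$ is an isomorphism on $\pi_0$. Matching cokernels between two maps of copies of $\bZ\phat$ then forces the middle map to be an isomorphism.
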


Combining the lemma with the observations in the paragraph that
proceeds it, we deduce the following theorem.  In the case $p=2$, the
theorem uses Conjecture~\ref{conj:ktpd} (resolved in~\cite[1.2]{Cho-ktpd}).

\begin{thm}\label{thm:FSplocFinal}
%Assume either $p=2$ and Conjecture~\ref{conj:ktpd} holds or $p>2$.
Under the weak equivalences
\begin{gather*}
F(K(\bF_{\ell}))\simeq \Sigma^{-3}I_{\bZ/p^{\infty}}K(\bF_{\ell})\\
F(\Fib(\trc_{\bZ}))\simeq \Sigma^{-3}I_{\bZ/p^{\infty}}K(\bZ)
\end{gather*}
of Theorem~\ref{main:FS} and Proposition~\ref{prop:ADFl}, the map
$F(K(\bF_{\ell}))\to F(\Fib(\trc_{\bZ}))$ in
Proposition~\ref{prop:FKSploc} is $\Sigma^{-3}$ of a unit (in $\bZ\phat$)
times the $\bZ/p^{\infty}$ Brown-Comenetz dual of the usual map of
$K$-theory spectra $K(\bZ)\to K(\bF_{\ell})$.
\end{thm}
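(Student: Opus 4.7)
The plan is to deduce the theorem formally from Lemma~\ref{lem:Jtricks} together with the Anderson duality framework built up earlier in the paper. The $p$-completion of the map $F(K(\bF_{\ell}))\to F(\Fib(\trc_{\bZ}))$ arises as the induced map on $F(-)\phat$ of the horizontal fiber of the homotopy cartesian square~\eqref{eq:Flsquare}, and so lifts to a map in the derived category of $K(\bZ)\phat$-modules. Combining Proposition~\ref{prop:ADFl} with Theorem~\ref{thm:ADFS} for $p>2$ (or Conjecture~\ref{conj:ktpd} for $p=2$) gives Anderson duality pairings on the pairs $(K(\bF_{\ell})\phat,\Sigma^{2}F(K(\bF_{\ell}))\phat)$ and $(K(\bZ)\phat,\Sigma^{2}F(\Fib(\trc_{\bZ}))\phat)$. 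The goal is to show that the map $K(\bZ)\to K(\bF_{\ell})$ of commutative ring spectra and $\Sigma^{2}$ of the map in question are Anderson dual up to multiplication by a unit in $\bZ\phat$; the theorem then follows by desuspending by three and converting Anderson duality to Brown-Comenetz duality via Proposition~\ref{prop:BCd}.

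By Definition~\ref{defn:BCdcrit}, being Anderson dual reduces to checking commutativity of a square of $\bZ/p^{\infty}$-valued homomorphisms on $\pi_{0}(-\sma -;\bZ/p^{\infty})$ groups. The commutative diagram of $K(\bZ)\phat$-modules displayed just before Lemma~\ref{lem:Jtricks}, which follows from the derived-category lift, identifies the two composites in this square with the two ways of mapping $\pi_{0}(K(\bZ)\phat\sma \Sigma^{2}F(K(\bF_{\ell}))\phat;\bZ/p^{\infty})$ into $\bZ/p^{\infty}$: one through the $K(\bF_{\ell})\phat$-action using $v_{\ell}$, the other through the $K(\bZ)\phat$-action on $\Sigma^{2}F(\Fib(\trc_{\bZ}))\phat$ using $u$. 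Unwinding the Anderson duality pairings, both composites factor through $\pi_{-1}(L_{K(1)}(-);\bZ/p^{\infty})$, linked by the map $L_{K(1)}K(\bF_{\ell})\to L_{K(1)}\Fib(\trc_{\bZ})$ coming from~\eqref{eq:Flsquare}, and commutativity reduces to showing that $v_{\ell}$ and $u$ agree up to an automorphism of $\bZ/p^{\infty}$ when transported via this map.

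This last reduction is exactly where Lemma~\ref{lem:Jtricks} enters: it asserts that the map induces an isomorphism on $\pi_{-1}(-;\bZ/p^{\infty})$. Since $\mathrm{Aut}(\bZ/p^{\infty})=\bZ\phat^{\times}$, the automorphism obtained by comparing $v_{\ell}$ with $u$ is multiplication by a unit in $\bZ\phat$, and the desired commutativity holds up to that unit. Applying Proposition~\ref{prop:BCd} then produces the weak equivalence $F(K(\bF_{\ell}))\to F(\Fib(\trc_{\bZ}))$ as $\Sigma^{-3}$ times a unit times the Brown-Comenetz dual of $K(\bZ)\to K(\bF_{\ell})$. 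The main obstacle to this whole strategy is of course Lemma~\ref{lem:Jtricks} itself, which must compare an explicit generator of $\pi_{-1}(L_{K(1)}K(\bF_{\ell});\bZ/p^{\infty})$ arising from the $\psi^{\ell}$-fixed-point description with the intrinsically defined $u$ from~\cite{BM-ktpd} and the discussion following Conjecture~\ref{conj:ktpd}; modulo this comparison, the theorem is essentially formal from the duality infrastructure already in place.
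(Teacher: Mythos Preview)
Your proposal is correct and follows essentially the same route as the paper: the paper's argument is the discussion in the paragraphs immediately preceding the theorem statement, which lifts the map to the derived category of $K(\bZ)\phat$-modules via the square~\eqref{eq:Flsquare}, observes that the resulting module-action diagram reduces Anderson duality (in the sense of Definition~\ref{defn:BCdcrit}) to the compatibility of $v_{\ell}$ and $u$ under the map $L_{K(1)}K(\bF_{\ell})\to L_{K(1)}\Fib(\trc_{\bZ})$, and then invokes Lemma~\ref{lem:Jtricks} to get commutativity up to a unit in $\bZ\phat^{\times}=\mathrm{Aut}(\bZ/p^{\infty})$. Your write-up matches this step for step.
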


Theorem~\ref{main:FSploc} is an immediate consequence: replacing the
maps in Proposition~\ref{prop:FKSploc} by unit multiples, we still get
a weak equivalence of $F(K(\bS_{(p)}))$ with the cofiber.

It remains to prove Lemma~\ref{lem:Jtricks}.  Before beginning the
proof, it is useful to have the following fact, which holds somewhat
more generally than the statement below; it is part of (and indeed preceded) the
general affirmation of the Quillen-Lichtenbaum conjecture, but not
easy to find a direct citation for in the $p=2$ case.

\begin{prop}\label{prop:pione}
Let $R\subset \bQ$ with $1/p\in R$, or let $R=\bQ\phat$. 
Then $\pi_{1}(K(R)\phat)\to
\pi_{1}(L_{K(1)}K(R))$ is an isomorphism.
\end{prop}

\begin{proof}
In the case $R=\bZ[\tfrac1p]$ or $R=\bQ\phat$, this is
\cite[2.2]{Rognesp}. In remaining cases, $\bZ[\tfrac1p]\subset
R\subset \bQ$; let $S$ be the set of prime numbers $\ell\neq p$ with
$1/\ell\in R$.  Since $\bZ[\tfrac1p]$ is a PID, 
\[
R=\bZ[\tfrac1p][1/\ell \mid \ell \in S]
\]
and we have the Quillen localization sequence
\[
\bigvee_{\ell\in S}K(\bF_{\ell})\to K(\bZ[\tfrac1p])
\to K(R)\to \Sigma\dotsb.
\]
We look at the comparison map from the $p$-completion to the
$L_{K(1)}$-localization.  The map $K(\bZ[\tfrac1p])\phat\to K(R)\phat$ is
an isomorphism on $\pi_{0}$ (since these are both PIDs), and this
implies that the map
\[
\pi_{1}(K(R)\phat)\to \pi_{0}\bigl(\bigl(\bigvee K(\bF_{\ell})\bigr)\phat\bigr)
\]
is surjective.  Each map $K(\bF_{\ell})_{(p)}\to L_{1}K(\bF_{\ell})$
is an isomorphism on $\pi_{q}$ for $q\geq -1$, so the map
\[
\pi_{q}\bigl(\bigl(\bigvee K(\bF_{\ell})\bigr)\phat\bigr)
\to \pi_{q}\bigl(L_{K(1)}\bigl(\bigvee K(\bF_{\ell})\bigr)\bigr)
\]
is an isomorphism for $q\geq 0$.  In particular, it follows that the
map 
\[
\pi_{1}(L_{K(1)}K(R))\to \pi_{0}\bigl(L_{K(1)}\bigl(\bigvee K(\bF_{\ell})\bigr)\bigr)
\]
is surjective.  We have already observed that the map 
\[
\pi_{1}(K(\bZ[\tfrac1p])\phat)\to \pi_{1}(L_{K(1)}K(\bZ[\tfrac1p]))
\]
is an isomorphism.  The five lemma now shows that the map
\[
\pi_{1}(K(R)\phat)\to \pi_{1}(L_{K(1)}K(R))
\]
is an isomorphism.
\end{proof}

We also use the following result.  In the form we state it, it follows
from the Quillen localization sequence for inverting $p$ and the fact
that $K(\bF_{p})\phat\simeq H\bZ\phat$ (and therefore the $K$-theory
of any $\bF_{p}$-algebra has trivial
$K(1)$-localization).  Much more general statements hold by
Corollary~C of~\cite{LMMT} and the main theorem
of~\cite{BhattClausenMathew-RemarksK1}. 

\begin{prop}\label{prop:pidinvp}
Let $R$ be a PID in which $p\neq 0$.  Then the map $R\to R[1/p]$
induces an equivalence of $K(1)$-local $K$-theory spectra
$L_{K(1)}K(R)\to L_{K(1)}K(R[1/p])$.
\end{prop}

We now prove Lemma~\ref{lem:Jtricks}.

\begin{proof}[Proof of Lemma~\ref{lem:Jtricks}]
Because $L_{K(1)}K(\bF_{\ell})$ and $L_{K(1)}\Fib(\trc_{\bZ})$ both
have $\pi_{-1}$ isomorphic to $\bZ\phat$ and $\pi_{-2}$ torsion free
($0$ for $L_{K(1)}K(\bF_{\ell})$ and $\bZ\phat$ for
$L_{K(1)}\Fib(\trc_{\bZ})$), it suffices to check that the map induces
an isomorphism on $\pi_{-1}$ (without coefficients). 

Before starting, we fix the following notation: write
$\ell=\omega r^{\eta p^{n}}$ where: 
\begin{enumerate}
\item For $p>2$, $r\in (\bZ\phat)^{\times}$ is a
topological generator of the subgroup $U_{1}\subset
(\bZ\phat)^{\times}$ congruent to $1$ mod $p$, $n$
is a positive integer, $\eta$ is a $p$-adic unit, and $\omega$ is a
$(p-1)$st root of unity. 
\item For $p=2$, $r\in (\bZ\phat[2])^{\times}$ is a
topological generator of the subgroup $U_{1}\subset
(\bZ\phat[2])^{\times}$ congruent to $1$ mod $4$, $n$
is a positive integer, $\eta$ is a $p$-adic unit, and $\omega=\pm1$.
\end{enumerate}
(The element $r^{\eta p^{n}}\in U_{1}$ is by definition the element
that maps to $\eta p^{n}$ under the isomorphism $U_{1}\iso \bZ\phat$
induced by the choice of the topological generator $r$.)

Now consider the composite map obtained by precomposing the suspension
of the map in question with the $L_{K(1)}$-localization of the map in Quillen's
localization sequence for $\bZ\to \bZ[\tfrac1\ell]$,
\[
L_{K(1)}K(\bZ\relax[\tfrac1\ell\relax])
\to \Sigma L_{K(1)}K(\bF_{\ell})\to \Sigma L_{K(1)}\Fib(\trc_{\bZ}).
\]
This composite is (up to a possible sign) the connecting map in the
Mayer-Vietoris (co)fiber sequence associated the homotopy cartesian
square~\eqref{eq:Flsquare}, so coincides with the composite
\[
L_{K(1)}K(\bZ[\tfrac1\ell])\to L_{K(1)}TC(\bZ[\tfrac1\ell])\simeq
L_{K(1)}TC(\bZ)\to \Sigma L_{K(1)}\Fib(\trc_{\bZ}).
\]
Using the isomorphisms from Propositions~\ref{prop:pione} and~\ref{prop:pidinvp}
\[
\pi_{1}((K(\bZ[\tfrac1{p\ell}]))\phat)\overto{\iso}
\pi_{1}(L_{K(1)}K(\bZ[\tfrac1{p\ell}]))\overfrom{\iso}
\pi_{1}(L_{K(1)}K(\bZ[\tfrac1\ell])),
\]
we have a canonical isomorphism 
\[
\pi_{1}(L_{K(1)}K(\bZ[\tfrac1\ell]))\iso (\bZ[\tfrac1{p\ell}]^{\times})\phat.
\]
The element $\ell\in (\bZ[\tfrac1{p\ell}]^{\times})\phat$ then
specifies a map
\[
\Sigma J\phat=\Sigma L_{K(1)}\bS\to L_{K(1)}K(\bZ[\tfrac1\ell]).
\]
Since $\pi_{0}(\Sigma J\phat)\iso \bZ\phat$, to complete the argument
it suffices to show that the composite maps
\begin{gather*}
\pi_{0}(\Sigma J\phat)\to \pi_{0}(\Sigma L_{K(1)}K(\bF_{\ell}))\\
\pi_{0}(\Sigma J\phat)\to \pi_{0}(\Sigma L_{K(1)}\Fib(\trc_{\bZ}))
\end{gather*}
both have cokernels cyclic of order $p^{n}$ (for $n$ the positive
integer in the notation fixed above).

First consider the composite map $\Sigma J\phat\to \Sigma
L_{K(1)}K(\bF_{\ell})$.  Under the canonical isomorphisms (induced by
the inclusion of the unit)
\begin{gather*}
\pi_{1}(\Sigma J\phat)\iso \pi_{0}(J\phat)\iso\bZ\phat\\
\pi_{1}(\Sigma L_{K(1)}K(\bF_{\ell}))\iso \pi_{0}(L_{K(1)}K(\bF_{\ell}))\iso\bZ\phat,
\end{gather*}
the map $\Sigma J\phat\to \Sigma L_{K(1)}K(\bF_{\ell})$ in question
induces on $\pi_{1}$ plus or minus the identity on $\bZ\phat$
(depending on the sign convention in the attaching map in the Quillen
localization sequence that we have not specified).  We have a
canonical isomorphism 
\[
\pi_{-1}(L_{K(1)}K(\bF_{\ell}))\iso 
H^{1}(\bZ;\bZ\phat)\iso \Hom(\bZ,\bZ\phat)\iso \bZ\phat
\]
specified by taking the usual inclusion $\iota\colon \bZ\to\bZ\phat$
as the generator.  We also have an isomorphism
\[
\pi_{-1}(J\phat)\iso H^{1}_{\Gal}((\bZ\phat)^{\times};\bZ\phat)\iso
\Hom_{c}((\bZ\phat)^{\times},\bZ\phat)\iso 
\Hom_{c}(U_{1},\bZ\phat)\iso 
\bZ\phat
\]
(where $\Hom_{c}$ denotes continuous homomorphisms of profinite
groups) specified by taking as the topological generator of
$\Hom_{c}((\bZ\phat)^{\times},\bZ\phat)$ the unique continuous
homomorphism $\phi_{r}$ sending $r$ to $1$.  The map 
\[
\pi_{-1}(J\phat) \iso \pi_{0}(\Sigma J\phat)
\to
\pi_{0}(\Sigma L_{K(1)}K(\bF_{\ell}))\iso
\pi_{-1}(L_{K(1)}K(\bF_{\ell}))
\]
is the map induced by the continuous homomorphism $\bZ\to
(\bZ\phat)^{\times}$ sending the generator to $\ell\in
(\bZ\phat)^{\times}$.  Since $\ell=\omega r^{\eta p^{n}}$, 
\[
\phi_{r}(\ell)=\phi_{r}(\omega)+\eta p^{n}\phi_{r}(r)=\eta p^{n}
\]
($\phi_{r}(\omega)=0$ since $\omega$ is torsion and $\bZ\phat$
is torsion free). The resulting map
\[
\Hom(\bZ,\bZ\phat)\to \Hom_{c}((\bZ\phat)^{\times},\bZ\phat)
\]
therefore sends $\phi_{r}$ to
$\eta p^{n}\iota$, and since $\eta$ is a unit, we see that the cokernel is
cyclic of order $p^{n}$ as claimed.
\iffalse
This calculation can also be done concretely as follows.
Writing $L\phat$ for the Adams summand of $KU\phat$ if $p$ is odd or
$L\phat=KO\phat$ if $p=2$, then $J\phat$ is the homotopy fixed points
of the action of $\psi^{r}$ on $L\phat$ (for $r$ in notation
fixed above), and we have a commuting diagram of fiber sequences
\[
\xymatrix{%
J\phat\ar[r]\ar[d]&L\phat\ar[r]^{\psi^{r}-1}\ar[d]_{\mathrm{inc}}
&L\phat\ar[d]^{\id+\psi^{r}+\dotsb+\psi^{r^{n}-1}}\\
L_{K(1)}K(\bF_{\ell})\ar[r]&KU\phat\ar[r]_{\psi^{\ell}-1}&KU\phat.
}
\]
The map on the right induces on $\pi_{0}\iso\bZ\phat$ multiplication
by $p^{n}$.  A map out of $J\phat$ is specified by
the image of the fundamental class in $\pi_{0}$, so the suspension of this map $J\phat\to
L_{K(1)}K(bF_{\ell})$ is the map $\Sigma J\phat\to \Sigma
L_{K(1)}K(bF_{\ell})$ in question, and it follows that the cokernel of
the induced map on $\pi_{0}$ is cyclic of order $p^{n}$ as claimed.
\fi

For the map $\Sigma J\phat\to \Sigma L_{K(1)}\Fib(\trc_{\bZ})$, we
look at the factorization $\Sigma J\phat\to L_{K(1)}TC(\bZ)$; write
$a_{\ell}$ for this map. The isomorphisms 
\begin{multline*}
\pi_{1}(L_{K(1)}TC(\bZ))\overto{\iso} \pi_{1}(L_{K(1)}TC(\bZ\phat))
\overfrom{\iso} \pi_{1}(L_{K(1)}K(\bZ\phat))\\
\overto{\iso} \pi_{1}(L_{K(1)}K(\bQ\phat))\overfrom{\iso}
\pi_{1}(K(\bQ\phat)\phat)
\end{multline*}
from~\cite[Add.~6.2]{HM2}, 
[ibid., Thm.~D], Proposition~\ref{prop:pidinvp}, and
Proposition~\ref{prop:pione} (in that order)
identify $\pi_{1}(L_{K(1)}TC(\bZ))$
as $((\bQ\phat)^{\times})\phat$.
By construction, $a_{\ell}$ sends the fundamental class
of $\pi_{1}(\Sigma J\phat)$ to $\ell$ under this
identification.  Then for any $\nu \in (\bQ\phat)^{\times}$, we have a
map $a_{\nu}\colon \Sigma J\phat\to L_{K(1)}TC(\bZ)$ that 
sends the fundamental class in $\pi_{1}(\Sigma J\phat)$ to the image
of $\nu$ in $\pi_{1}L_{K(1)}TC(\bZ)$.   In the case $p>2$,
$\omega$ goes to the identity in  $((\bQ\phat)^{\times})\phat$ since
$((\bQ\phat)^{\times})\phat$ is torsion free, and we
have that $a_{\ell}=a_{r^{\eta p^{n}}}=\eta p^{n}a_{r}$.  In the case $p=2$,
$((\bQ\phat)^{\times})\phat$ is not torsion free, but $\pi_{1}(\Sigma
L_{K(1)}\Fib(\trc_{\bZ}))$ is torsion free, so
$a_{\ell}$ and $\eta p^{n}a_{r}$ become the same after composing
with the map $\Sigma L_{K(1)}TC(\bZ)\to \Sigma
L_{K(1)}\Fib(\trc_{\bZ})$. Since the composite 
\[
J\phat\overto{a_{r}}  L_{K(1)}TC(\bZ)\to \Sigma L_{K(1)}\Fib(\trc_{\bZ})
\]
is an isomorphism on $\pi_{0}$, the map in question
\[
J\phat\overto{a_{\ell}}  L_{K(1)}TC(\bZ)\to \Sigma L_{K(1)}\Fib(\trc_{\bZ})
\]
has cokernel cyclic of order $p^{n}$ as claimed.  This completes
the proof.
\end{proof}

%%%%%%%%%%%%%%%%%%%%%%%%%%%%%%%%%%%%%%%%%%%%%%%%%%%%%%%%%%%%%%%%%%%%%%%%
\section{Proof of Theorems~\ref{main:waldsplit} and~\ref{main:Swaldsplit}}
\label{sec:wspf}

In this section, we prove Theorems~\ref{main:waldsplit}
and~\ref{main:Swaldsplit}, which state that the maps
\begin{gather*}
K(\bS_{(p)})_{(p)}\to \holim_{n} K(L^{f}_{n} \bS)_{(p)}\\
K(\bS)_{(p)}\to \holim_{n} K(L^{p,f}_{n} \bS)_{(p)}
\end{gather*}
are inclusions of wedge summands.  The argument relies on recent work
of Land-Mathew-Meier-Tamme~\cite{LMMT}; we use the following result
that we have stated in substantially weaker form than it appears
there.

\begin{thm}[{Land-Mathew-Meier-Tamme~\cite{LMMT}, Theorem~A}]\label{thm:LMMT}
Let $R\to R'$ be an $L^{p,f}_{n}$-equivalence of ring spectra.  Then 
\[
(L_{n}K(R))\phat\to (L_{n}K(R'))\phat.
\]
is a weak equivalence.
\end{thm}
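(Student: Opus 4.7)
The plan is to reduce the statement, via iterated chromatic fracture at $p$, to a monochromatic purity claim at each height $i \leq n$, and then invoke the purity of $T(i)$-localized $K$-theory under $L^{p,f}_i$-equivalences. After $p$-completion, the functor $(L_{n}(-))\phat$ is reconstructed from the monochromatic pieces $L_{K(i)}(-)$ for $1 \leq i \leq n$ via iterated pullback squares of the form
\[
\xymatrix{
(L_{i}X)\phat\ar[r]\ar[d] & L_{K(i)}X\ar[d]\\
(L_{i-1}X)\phat\ar[r] & L_{K(i)}((L_{i-1}X)\phat)
}
\]
(the rational contribution $L_{0}$ is annihilated by $(-)\phat$). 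It therefore suffices to prove that $L_{K(i)}K(R)\to L_{K(i)}K(R')$ is a weak equivalence for each $1\leq i\leq n$. Since every $K(i)$-local spectrum is $T(i)$-local, $L_{K(i)}$ factors through $L_{T(i)}$, so it even suffices to show $L_{T(i)}K(R)\to L_{T(i)}K(R')$ is a weak equivalence.

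This reduces the problem to the following purity statement: for each $i\geq 1$, the functor $L_{T(i)}K(-)$ on ring spectra inverts $L^{p,f}_{i}$-equivalences. (Our hypothesis that $R\to R'$ is an $L^{p,f}_{n}$-equivalence gives the $L^{p,f}_{i}$-equivalence condition for every $i\leq n$, since $L^{p,f}_{i}$ factors through $L^{p,f}_{n}$.) To establish purity, the plan is to use the Dundas-Goodwillie-McCarthy theorem, in the refined form appropriate for $T(i)$-local invariants, to replace $L_{T(i)}K$ by $L_{T(i)}TC$ up to controlled error, then use the Nikolaus-Scholze description of $TC$ as an equalizer involving $THH^{tC_{p}}$ to reduce to a statement about $THH$ and its cyclotomic structure. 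Finally one shows $L_{T(i)}THH(-)$ depends only on the $L^{p,f}_{i}$-localization of the input, exploiting that $THH$ is a bilinear ``tensor''-type invariant compatible with telescopic localization.

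The hardest step, and the one requiring the deepest input, is controlling how telescopic localization interacts with the cyclotomic structure and the Frobenius on $THH$; this is where one must use descent arguments in the spirit of Clausen-Mathew-Naumann-Noel, reducing purity to its verification on a well-understood test ring where chromatic behavior can be computed directly. Combined with careful tracking of Frobenius twists across heights, this yields purity for $L_{T(i)}TC$, hence for $L_{T(i)}K$, under $L^{p,f}_{i}$-equivalences; by the reduction above, this completes the proof.
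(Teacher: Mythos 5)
Note first that the paper does not prove this statement at all: it is imported verbatim (in deliberately weakened form) from Land--Mathew--Meier--Tamme, so the ``paper's proof'' is the citation, and you are in effect trying to reprove the main theorem of \cite{LMMT}. Your opening reduction is sound: after $p$-completion the iterated chromatic fracture squares reconstruct $(L_{n}X)\phat$ functorially from the monochromatic pieces $L_{K(i)}X$ for $1\le i\le n$ (the rational layer dies under $(-)\phat$), and since $L_{K(i)}=L_{K(i)}L_{T(i)}$ it suffices to show that $L_{T(i)}K(R)\to L_{T(i)}K(R')$ is an equivalence for $1\le i\le n$. But that statement \emph{is} the purity theorem of \cite{LMMT}; at this point nothing of substance has been proved.

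The strategy you propose for the remaining (and only substantive) step would fail. You propose to deduce purity for $L_{T(i)}K$ from purity for $L_{T(i)}TC$, and that in turn from the claim that $L_{T(i)}THH(-)$ depends only on $L^{p,f}_{i}$ of the input. Both of these auxiliary purity statements are false, and \cite{LMMT} points this out explicitly. For a concrete failure at height $1$: the map of discrete rings $\bZ\phat\to\bQ\phat$ is an $L^{p,f}_{1}$-equivalence (its cofiber is a torsion Eilenberg--Mac\,Lane spectrum, hence $[1/p]$-trivial and $T(0)$- and $T(1)$-acyclic), yet $THH(\bQ\phat)$ is rational, so $TC(\bQ\phat;p)\phat\simeq *$ and $L_{T(1)}TC(\bQ\phat)=0$, while $L_{T(1)}TC(\bZ\phat)\simeq L_{K(1)}K(\bZ\phat)\neq 0$. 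Purity is a special feature of $K$-theory, not of trace invariants. The actual argument of \cite{LMMT} goes a different route: one studies the fiber of $K(R)\to K(L^{p,f}_{n}R)$ via the localization sequence for the category of $L^{p,f}_{n}$-acyclic perfect $R$-modules, reduces by d\'evissage to the $K$-theory of endomorphism rings of generators of that category, and then bounds the telescopic homotopy of those $K$-theories using Dundas--Goodwillie--McCarthy together with the descent and vanishing results of Clausen--Mathew--Naumann--Noel. As written, your proof defers its entire content to an assertion that is not only unproved but false.
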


The maps $\bS_{(p)}\to L^{f}_{n}\bS$ and $\bS\to L^{p,f}_{n}\bS$ are
$L^{p,f}_{n}$-localization maps and in particular
$L^{p,f}_{n}$-equivalences.  We therefore get the following corollary.

\begin{cor}\label{cor:LMMT}
The maps
\begin{gather*}
\holim_{n} (L_{n}K(\bS_{(p)}))\phat \to \holim_{n} (L_{n}K(L^{f}_{n} \bS))\phat\\
\holim_{n} (L_{n}K(\bS))\phat \to \holim_{n} (L_{n}K(L^{p,f}_{n} \bS))\phat
\end{gather*}
are weak equivalences.
\end{cor}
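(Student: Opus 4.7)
The plan is to reduce the corollary to Theorem~\ref{thm:LMMT} applied at each finite stage $n$, then invoke the fact that homotopy limits preserve levelwise weak equivalences. So the task is to show that for each $n$ the maps
\[
(L_n K(\bS_{(p)}))\phat \to (L_n K(L^f_n \bS))\phat
\qquad\text{and}\qquad
(L_n K(\bS))\phat \to (L_n K(L^{p,f}_n \bS))\phat
\]
are weak equivalences, and that these levelwise maps assemble into maps of towers in $n$.

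The key input is that both $\bS_{(p)}\to L^f_n\bS$ and $\bS\to L^{p,f}_n\bS$ are $L^{p,f}_n$-equivalences. The second is immediate, being by definition the $L^{p,f}_n$-localization. For the first, I would argue that on $p$-local spectra the functors $L^f_n$ and $L^{p,f}_n$ agree, so $L^{p,f}_n\bS_{(p)}\simeq L^f_n\bS$ (using the description in~\cite[2.5]{LMMT} of $L^{p,f}_n$ as a version of $L^f_n$ that does not $p$-localize), whence $\bS_{(p)}\to L^f_n\bS$ is (equivalent to) the $L^{p,f}_n$-localization of $\bS_{(p)}$ and in particular an $L^{p,f}_n$-equivalence. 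Applying Theorem~\ref{thm:LMMT} with $(R,R')=(\bS_{(p)},L^f_n\bS)$ and $(R,R')=(\bS,L^{p,f}_n\bS)$ respectively then gives the required levelwise weak equivalences.

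The remaining bookkeeping is to verify that these levelwise equivalences are compatible with the tower structure in $n$; this follows from naturality of $K(-)$, $L_n$, and $(-)\phat$, applied to the structure maps $L^f_{n+1}\bS\to L^f_n\bS$, $L^{p,f}_{n+1}\bS\to L^{p,f}_n\bS$, and $L_{n+1}\to L_n$. There is no substantive obstacle in the argument: the full weight of the corollary is carried by Theorem~\ref{thm:LMMT}, which provides exactly the $L^{p,f}_n$-invariance of $(L_n K)\phat$ needed here, and the corollary is essentially a repackaging of it together with the fact that holim preserves equivalences.
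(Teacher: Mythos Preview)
Your proposal is correct and follows essentially the same approach as the paper: the paper simply notes that $\bS_{(p)}\to L^{f}_{n}\bS$ and $\bS\to L^{p,f}_{n}\bS$ are $L^{p,f}_{n}$-localization maps (hence $L^{p,f}_{n}$-equivalences) and then invokes Theorem~\ref{thm:LMMT}, which is exactly your argument with the tower-compatibility bookkeeping made explicit.
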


Using the localization maps
\begin{gather*}
\holim_{n} K(L^{f}_{n} \bS)\phat \to
\holim_{n} (L_{n}K(L^{f}_{n} \bS))\phat\\
\holim_{n} K(L^{p,f}_{n} \bS)\phat\to
\holim_{n} (L_{n}K(L^{p,f}_{n} \bS))\phat
\end{gather*}
and the inverse equivalence from Corollary~\ref{cor:LMMT}, we get maps
\begin{equation}\label{eq:psplit}
\begin{gathered}
\holim_{n} K(L^{f}_{n} \bS)\phat\to \holim_{n} (L_{n}K(\bS_{(p)}))\phat\\
\holim_{n} K(L^{p,f}_{n} \bS)\phat\to \holim_{n} (L_{n}K(\bS))\phat
\end{gathered}
\end{equation}
with the property that the composite maps 
\begin{gather*}
K(\bS_{(p)})\phat \to \holim_{n} (L_{n}K(\bS_{(p)}))\phat\\
K(\bS)\phat \to \holim_{n} (L_{n}K(\bS))\phat
\end{gather*}
are the $p$-completions of the chromatic completion maps. 

We also have maps on rationalizations, constructed as follows.  Since
the ring spectrum $\bQ$ is $L^{p,f}_{n}$-local, the maps of ring
spectra $\bS_{(p)}\to \bQ$ and $\bS\to \bQ$ factor through maps
$L^{f}_{n}\bS\to \bQ$ and $L^{p,f}_{n}\bS\to \bQ$.  The Quillen
localization sequence and the fact that the rational $K$-theory of
finite fields is zero except in degree zero imply that the maps
\begin{equation}\label{eq:isoexcept1}
\begin{gathered}
K(\bS_{(p)})_{\bQ}\to K(\bQ)_{\bQ}\\
K(\bS)_{\bQ}\to K(\bQ)_{\bQ}
\end{gathered}
\end{equation}
induce an isomorphism on homotopy groups except in degree $1$; in this
degree, 
$\pi_{1}(K(\bS);\bQ)=0$ and we have a canonical splitting
\[
\pi_{1}(K(\bQ);{\bQ})\iso (\bQ^{\times})\otimes \bQ\iso 
\bigoplus_{\ell\text{ prime}}\!\!\bQ
\;\to\!\!\!\! \bigoplus_{\ell\text{ prime},\,\ell\neq p}\!\!\!\!\!\!\bQ
\iso (\bZ_{(p)}^{\times})\otimes \bQ\iso \pi_{1}(K(\bS_{(p)});{\bQ}).
\]
The splittings on rational homotopy groups specify splittings of
rational spectra and we get maps
\begin{equation}\label{eq:qsplit}
\begin{gathered}
(\holim_{n} K(L^{f}_{n}\bS))_{\bQ}\to K(\bQ)_{\bQ}\to
K(\bS_{(p)})_{\bQ} \overto{\simeq} (\holim_{n} L_{n}K(\bS_{(p)}))_{\bQ}\\
(\holim_{n} K(L^{p,f}_{n}\bS))_{\bQ}\to K(\bQ)_{\bQ}\to
K(\bS)_{\bQ} \overto{\simeq} (\holim_{n} L_{n}K(\bS))_{\bQ}
\end{gathered}
\end{equation}
with the property that the composite maps
\begin{gather*}
K(\bS_{(p)})_{\bQ} \to (\holim_{n} L_{n}K(\bS_{(p)}))_{\bQ}\\
K(\bS)_{\bQ} \to (\holim_{n} L_{n}K(\bS))_{\bQ}
\end{gather*}
are the rationalizations of the chromatic completion maps.

We have constructed a $p$-complete map and a rational map.  The
following theorem, proved in the next section, asserts that they are
compatible with the arithmetic square. 

\begin{thm}\label{thm:arith}
For $S=\bS$ or $\bS_{(p)}$, the diagram in the stable category 
\[
\xymatrix@-1pc{%
(\holim K(L^{p,f}_{n}S))_{(p)}\ar[r]\ar[d]
&\holim K(L^{p,f}_{n}S)\phat\ar[r]^-{\eqref{eq:psplit}}
&\holim L_{n}K(S)\phat\ar[dd]\\
(\holim K(L^{p,f}_{n}S))_{\bQ}\ar[d]_-{\eqref{eq:qsplit}}\\
K(S)_{\bQ}\ar[r]_-{\simeq}&
(\holim L_{n}K(S))_{\bQ}\ar[r]&((\holim L_{n}K(S))\phat)_{\bQ}
}
\]
commutes.  
\end{thm}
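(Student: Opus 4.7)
The target $((\holim L_{n}K(S))\phat)_{\bQ}$ is a rational spectrum, so both composite maps in the diagram factor uniquely through the rationalization $(\holim K(L^{p,f}_{n}S))_{\bQ}$ of the source. Since $L^{p,f}_{n}\bS$ is rationally $\bQ$ (as $\bQ$ is already $L^{p,f}_{n}$-local), this source rationalization identifies canonically with $K(\bQ)_{\bQ}$. The claim is thus reduced to showing that two rational maps $K(\bQ)_{\bQ}\to((\holim L_{n}K(S))\phat)_{\bQ}$ agree.

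Next I would rewrite the top path in a form directly comparable to the bottom. The map \eqref{eq:psplit} is built from the integrally defined natural map $\holim K(L^{p,f}_{n}S)\to\holim L_{n}K(L^{p,f}_{n}S)$ (chromatic completion applied to each $K(L^{p,f}_{n}S)$), composed with the inverse of the LMMT equivalence $\holim L_{n}K(S)\phat\xrightarrow{\simeq}\holim L_{n}K(L^{p,f}_{n}S)\phat$. Upon rationalizing, the top path becomes
\[
K(\bQ)_{\bQ}\to(\holim K(L^{p,f}_{n}S)\phat)_{\bQ}\to((\holim L_{n}K(L^{p,f}_{n}S))\phat)_{\bQ}\xleftarrow{\simeq}((\holim L_{n}K(S))\phat)_{\bQ},
\]
while the bottom path is $K(\bQ)_{\bQ}\xrightarrow{\mathrm{split}}K(S)_{\bQ}=(\holim L_{n}K(S))_{\bQ}\to((\holim L_{n}K(S))\phat)_{\bQ}$, where $\mathrm{split}$ is the splitting \eqref{eq:qsplit}.

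The equality of these two rational maps can be verified on rational homotopy groups. The main obstacle is to identify the rationalized inverse of the LMMT equivalence and match it with the $\pi_{1}$-level splitting defining~\eqref{eq:qsplit}. Structurally, the natural map $\holim L_{n}K(S)\to\holim L_{n}K(L^{p,f}_{n}S)$ is rationally the inclusion $K(S)_{\bQ}\hookrightarrow K(\bQ)_{\bQ}$, an isomorphism except on $\pi_{1}$ where it is the injection $\bZ_{(p)}^{\times}\otimes\bQ\hookrightarrow\bQ^{\times}\otimes\bQ$. The splitting~\eqref{eq:qsplit} was defined precisely as a left inverse of this injection by projecting to the summands $\ell\ne p$. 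The rationalized inverse LMMT equivalence provides the matching left inverse at the $((-)\phat)_{\bQ}$-level, and the required agreement follows from the naturality of the LMMT construction together with the pro-constancy result of Proposition~\ref{prop:proconst}, which makes the comparison in higher degrees routine.
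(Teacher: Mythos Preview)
There are two genuine gaps.

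First, your claim that $(\holim K(L^{p,f}_{n}S))_{\bQ}$ identifies with $K(\bQ)_{\bQ}$ does not follow from the fact that each $L^{p,f}_{n}S$ is rationally $\bQ$. Rationalization does not commute with homotopy limits, so even if each term of the tower $\{K(L^{p,f}_{n}S)\}$ is rationally $K(\bQ)$, the rationalization of the homotopy limit need not be $K(\bQ)_{\bQ}$; there can be nontrivial $\lim^{1}$ contributions. You therefore cannot reduce to comparing two maps out of $K(\bQ)_{\bQ}$.

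Second, and more seriously, the sentence ``the rationalized inverse LMMT equivalence provides the matching left inverse at the $((-)\phat)_{\bQ}$-level'' is exactly the statement to be proved, and you give no argument for it. The LMMT equivalence exists only after $p$-completion; identifying what it does on $\pi_{1}(-;\bQ\phat)$ relative to the explicit $\pi_{1}$-splitting defining~\eqref{eq:qsplit} is the entire difficulty. Invoking naturality and Proposition~\ref{prop:proconst} does not address this: the pro-constancy result controls the tower $\{\pi_{q}(L_{n}K(S))\}$, not the comparison of the two paths in degree~$1$. The paper's proof handles this by a quite different mechanism: it maps the target further to $(L_{K(1)}K(\bQ))_{\bQ}$, proves this map is injective on $\pi_{q}(-;\bQ)$ for $q\geq 0$ (Lemma~\ref{lem:arithinj}), so that it suffices to check agreement there, and then shows via a nontrivial argument passing through $TC$ and $p$-adic valuations (Lemma~\ref{lem:arithfact}) that the image of $\pi_{*}(\holim K(L^{p,f}_{n}S);\bQ)$ in $\pi_{*}(K(\bQ);\bQ)$ already lies in the image of $\pi_{*}(K(S);\bQ)$. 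The $\pi_{1}$ case of that lemma is delicate and uses the factorization through $TC(\bZ\phat)\phat$; it is not a formal consequence of naturality.
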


As we now explain, the previous theorem completes the proof of 
Theorems~\ref{main:waldsplit} and~\ref{main:Swaldsplit}. 
The previous theorem implies that there exists a map
\begin{equation}\label{eq:ccintsplit}
(\holim K(L^{p,f}_{n}S))_{(p)}\to \holim L_{n}K(S)
\end{equation}
such that the composite maps 
\begin{gather*}
K(S)_{(p)}\to (\holim K(L^{p,f}_{n}S))_{(p)}\to (\holim L_{n}K(S))\phat\\
K(S)_{(p)}\to (\holim K(L^{p,f}_{n}S))_{(p)}\to (\holim L_{n}K(S))_{\bQ}
\end{gather*}
are the canonical ones, i.e., the chromatic completion map followed by
the $p$-completion map and rationalization maps, respectively.  We
first need to observe that the map~\eqref{eq:ccintsplit} factors through
$K(S)_{(p)}$. 

In the case $S=\bS$, this is clear because as the limit of connective
spectra, $\holim K(L^{p,f}_{n}S)$ is $(-2)$-connected and the
$(-2)$-connected cover of $\holim L_{n}K(\bS)$ is $K(\bS_{(p)})_{(p)}$
by Theorem~\ref{main:FS}.  In the case $S=\bS_{(p)}$, the
$(-2)$-connective cover of $\holim L_{n}K(\bS_{(p)})$ has non-trivial
$\pi_{-1}$, and it suffices to see that the map on $\pi_{-1}$ is
trivial.  In the notation of~\ref{notn:F}, the map 
\[
\pi_{-1}(\holim L_{n}K(\bS_{(p)}))\to \pi_{-2}(F(K(\bS_{(p)})))
\]
is injective; by 
Theorem~\ref{main:FS}, $\pi_{-2}F(K(\bS))=0$, and that implies that the map 
\[
F(K(\bS_{(p)})))\to \Sigma \bigvee_{\ell\neq p}F(K(\bF_{\ell}))
\simeq \bigvee_{\ell\neq p}F(\Sigma K(\bF_{\ell}))
\]
in the fibration sequence of Proposition~\ref{prop:FKSploc} is
injective on $\pi_{-2}$.  Since the maps 
\[
L_{1}\Sigma K(\bF_{\ell})\to \Sigma F(\Sigma K(\bF_{\ell}))
\]
are injective on $\pi_{-1}$ and the diagram
\[
\xymatrix{%
\holim L_{n}K(\bS_{(p)})\ar[r]\ar[d]&\Sigma F(K(\bS_{(p)}))\ar[d]\\
L_{1}\Sigma K(\bF_{\ell})\ar[r]&\Sigma F(\Sigma K(\bF_{\ell}))
}
\]
commutes by naturality, it suffices to observe that the maps 
\[
\holim K(L^{f}_{n}\bS)\to L_{1}\Sigma K(\bF_{\ell})
\]
are all zero on $\pi_{-1}$.  But since by construction these maps
factor through $L_{1}K(\bZ_{(p)})$ (see Proposition~\ref{prop:SQseq}),
they factor through $K(L^{f}_{1}\bS)$, which is connective.

The factorization constructed above is a map
\begin{equation}\label{eq:intsplit}
(\holim K(L^{p,f}_{n}S))_{(p)}\to K(S)_{(p)}
\end{equation}
such that the composite maps 
\begin{gather*}
K(S)_{(p)}\to (\holim K(L^{p,f}_{n}S))_{(p)}\to K(S)\phat\\
K(S)_{(p)}\to (\holim K(L^{p,f}_{n}S))_{(p)}\to K(S)_{\bQ}
\end{gather*}
are the $p$-completion and rationalization maps, respectively.  In the
fiber sequence for the arithmetic square for $K(S)_{(p)}$, the
connecting map  
\[
\Sigma^{-1}K(S)_{\bQ}\to K(S)_{(p)}
\]
is the zero map on homotopy groups, because
each homotopy group of $K(\bS)$ is finitely generated and each
homotopy group of $K(\bS_{(p)})$ is the extension by a finitely
generated group of a subgroup of a direct sum of finitely generated
groups.  It follows that the composite map
\begin{equation}\label{eq:comp}
K(S)_{(p)}\to (\holim K(L^{p,f}_{n}S))_{(p)}\to K(S)_{(p)}
\end{equation}
induces the identity map  on homotopy groups and in particular is a
weak equivalence.  Composing the map~\eqref{eq:intsplit} with the inverse of
this weak equivalence, we get a map
\begin{equation*}\label{eq:finalsplit}
(\holim K(L^{p,f}_{n}S))_{(p)}\to K(S)_{(p)}
\end{equation*}
so that the composite map
\[
K(S)_{(p)}\to (\holim K(L^{p,f}_{n}S))_{(p)}\to K(S)_{(p)}
\]
is the identity, completing the proof.

We close the section with a few remarks on the chromatic localization
analogue of Theorem~\ref{main:waldsplit}, specifically what is missing
to prove that the map
\[
K(\bS_{(p)})_{(p)}\to \holim K(L_{n}\bS)_{(p)}
\]
is the inclusion of a wedge summand.  Most of the argument (in this
section and the next) works for $L_{n}$ as it does for $L_{n}^{f}$ except the
Land-Mathew-Meier-Tamme result (Theorem~\ref{thm:LMMT}) that we used
to construct the map~\eqref{eq:psplit}.  For the
chromatic localization version, we need to construct a map 
\begin{equation}\label{eq:cpsplit}
\holim K(L_{n}\bS)\phat\to \holim (L_{n}K(\bS_{(p)}))\phat
\end{equation}
by some other means.  We envision such a construction would go as
follows.  Previous work of the authors~\cite[1.1,2.6]{BMY1} shows that
the map
\[
TC(\bS_{(p)})\phat\to \holim TC(L_{n}\bS_{(p)})\phat\overto{\simeq}
\holim (L_{n}TC(L_{n}\bS_{(p)}))\phat
\]
is a weak equivalence (the latter weak equivalence holding because
$TC(L_{n}\bS_{(p)})$ is $L_{n}$-local).  We also have by the
Land-Mathew-Meier-Tamme result in Theorem~\ref{thm:LMMT} (and the
telescope conjecture in the case $n=1$) that the map
\[
(L_{1}K(\bS_{(p)}))\phat\to (L_{1}K(L_{1}\bS_{(p)}))\phat
\]
is a weak equivalence, which then gives us a map 
\[
(L_{1}K(L_{1}\bS_{(p)}))\phat\to (L_{1}TC(\bS_{(p)}))\phat.
\]
We would then expect (but have not been able to verify) that the
resulting diagram
\begin{equation}\label{eq:stabbingspree}
\begin{gathered}
\xymatrix{%
\holim K(L_{n}\bS)\phat\ar[r]\ar[d]&TC(\bS_{(p)})\phat\ar[d]\\
(L_{1}K(L_{1}\bS_{(p)}))\phat\ar[r]&(L_{1}TC(\bS_{(p)}))\phat
}
\end{gathered}
\end{equation}
should commute where the horizontal are maps as discussed and the
vertical maps are induced by $L_{1}$-localization.  If this is the
case, then using the commutative diagram
\[
\xymatrix{%
(L_{1}K(L_{1}\bS_{(p)}))\phat\ar[d]
&(L_{1}K(\bS_{(p)}))\phat\ar[l]_{\simeq}\ar[d]\ar[r]
&L_{1}TC(\bS_{(p)})\phat\ar[d]\\
(L_{1}K(\bQ))\phat
&(L_{1}K(\bZ_{(p)}))\phat\ar[l]^{\simeq}\ar[r]
&L_{1}TC(\bZ_{(p)})\phat,
}
\]
we get a commutative diagram
\[
\xymatrix{%
\holim K(L_{n}\bS)\phat\ar[r]\ar[d]&TC(\bS_{(p)})\phat\ar[d]\\
(L_{1}K(\bZ_{(p)}))\phat\ar[r]&(L_{1}TC(\bZ_{(p)}))\phat
}
\]
which then entitles us to a map of the form~\eqref{eq:cpsplit}: the
outer square
\[
\xymatrix@C-1pc{%
\holim (L_{n}K(\bS_{(p)}))\phat\ar[r]\ar[d]
&TC(\bS_{(p)})\phat\ar[d]\ar[r]^-{\simeq}
&\holim (L_{n}TC(\bS_{(p)}))\phat\ar[d]\\
(L_{1}K(\bZ_{(p)}))\phat\ar[r]
&(L_{1}TC(\bZ_{(p)}))\phat\ar[r]_{\id}
&(L_{1}TC(\bZ_{(p)}))\phat
}
\]
is homotopy cartesian by the Dundas-McCarthy theorem and the vanishing
of $L_{K(n)}$, $n>1$ on the bottom row.  The inner square is therefore also
homotopy cartesian.  The resulting map~\eqref{eq:cpsplit} 
is not uniquely determined, but for any choice, the composite maps 
\begin{gather*}
K(\bS_{(p)})\phat\to \holim K(L_{n}\bS)\phat\to TC(\bS_{(p)})\phat\\
K(\bS_{(p)})\phat\to \holim K(L_{n}\bS)\phat\to L_{1}K(\bZ_{(p)})\phat
\end{gather*}
are the usual ones.
We can show in the case $p>2$
(applying~\cite[1.1]{BM-KSpi} and an argument like the analysis of the
map~\eqref{eq:comp} above) 
that the map~\eqref{eq:cpsplit} can be chosen so that the composite
map 
\[
K(\bS_{(p)})\phat\to \holim K(L_{n}\bS)\phat\to \holim (L_{n}K(\bS_{(p)}))\phat
\]
is the $p$-completion of the chromatic completion map.  In other
words, the argument for the chromatic localizations reduces in the
case $p>2$ to showing that~\eqref{eq:stabbingspree} commutes.

We note that post composing the diagram~\eqref{eq:stabbingspree} with
the map 
\begin{equation}\label{eq:stabbintheeye}
(L_{1}TC(\bS_{(p)}))\phat\to 
(L_{1}TC(L_{1}\bS_{(p)}))\phat
\end{equation}
(induced by the map $\bS_{(p)}\to L_{1}\bS_{(p)}$) does result in a
commuting diagram, but~\eqref{eq:stabbintheeye} is far from a weak
equivalence.  In fact, the difference can be detected in rational
homotopy groups mod the Serre class of finite dimensional $\bQ\phat$
vector spaces (in the category of all $\bQ\phat$ vector spaces).
Since this is a negative result, we just give an outline.
Starting from
\[
TC(\bS_{(p)})\phat \simeq \bS\phat \vee \Fib(\Sigma
(\Sigma^{\infty}_{+}\bC P^{\infty})\phat\sto \bS\phat),
\]
Ravenel~\cite[9.2]{Ravenel-Periodic} shows that
$\pi_{*}((L_{1}TC(\bS_{(p)}))\phat;\bQ)$ is infinite dimensional over
$\bQ\phat$ in infinitely many even degrees.  On the other hand, basic
Tate vanishing results imply
\[
(L_{1}TC(L_{1}\bS_{(p)}))\phat\simeq (L_{K(1)}\bS)^{h\bT}.
\]
(Although $L_{1}\bS$ is non-connective and the theorem identifying
classic $TC$ and Nikolaus-Scholze $TC$ does not apply, both
constructions give the same result in this case.)  We can
calculate $(L_{K(1)}\bS)^{h\bT}$ as the homotopy fiber of the self map
$\psi^{r}-1$ on $(KU\phat)^{h\bT}$ (for $r$ as above).  We have 
\[
\pi_{*}((KU\phat)^{h\bT})\iso (KU\phat)^{*}(\bC P^{\infty})\iso \bZ\phat{}[u,u^{-1}][[x]]
\]
and $\psi^{r}$ is a $\bZ\phat$-algebra map that acts on $x$ by the
$r$-series and on $u$ by multiplication by $r$.  It follows that
$\psi^{r}-1$ is injective on homotopy groups except in degree 0, where
it has kernel $\bZ\phat$ (generated by the unit).  This shows that
$\pi_{*}((L_{1}TC(L_{1}\bS_{(p)}))\phat;\bQ)$ is zero in the non-zero
even degrees, and one dimensional over $\bQ\phat$ in degree zero.

%%%%%%%%%%%%%%%%%%%%%%%%%%%%%%%%%%%%%%%%
\section{Proof of Theorem~\ref{thm:arith}}\label{sec:arith}

The entirety of this section is devoted to the proof of
Theorem~\ref{thm:arith}. As in the statement, let $S=\bS$ or
$\bS_{(p)}$ and then $L_{n}^{p,f}S$ is $L_{n}^{p,f}\bS$ or
$L_{n}^{f}\bS$ (respectively). 

We need to show that the two maps
\[
(\holim K(L^{p,f}_{n}S))_{(p)}\to ((\holim L_{n}K(S))\phat)_{\bQ}
\]
in the diagram in the statement coincide, and because the spectrum on
the right is rational, this amounts to a rational homotopy group
calculation.  It is equivalent (and notationally lighter) to show that
the two maps 
\[
\pi_{*}(\holim K(L^{p,f}_{n}S))\to
\pi_{*}((\holim L_{n}K(S))\phat;\bQ)
\]
coincide.  The main tool we use is the diagram
\begin{equation}\label{eq:bigarith}%
\begin{gathered}\hspace{-1em}
\xymatrix@-1pc{%
\holim K(L^{p,f}_{n}S)\ar[rrr]\ar[dr]\ar[ddd]
&&&(\holim L_{n}K(S))\phat\ar[ddd]\ar[dl]\\
&K(\bQ)\ar[r]\ar[d]&L_{K(1)}K(\bQ)\ar[d]\\
&K(\bQ)_{\bQ}\ar[r]\ar[dl]&(L_{K(1)}K(\bQ))_{\bQ}\\
K(S)_{\bQ}%(\holim L_{n}K(S))_{\bQ}
&&&((\holim L_{n}K(S))\phat)_{\bQ}\ar[ul]
}\end{gathered}\hspace{-1em}
\end{equation}
which commutes by construction: the
left vertical arrow is essentially defined in~\eqref{eq:qsplit} by the
lefthand trapezoid.  Note the direction of the lower left diagonal
arrow; the first step in the proof is to show that we can make it go
the other way:

\begin{lem}\label{lem:arithfact}
The map $\pi_{*}(\holim K(L^{p,f}_{n}S);\bQ)\to \pi_{*}(K(\bQ);\bQ)$
factors through the image of $\pi_{*}(K(S);\bQ)$.
\end{lem}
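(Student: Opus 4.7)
The plan is to factor the map $\holim_n K(L^{p,f}_n S) \to K(\bQ)$ through $K(Z)$, where $Z = \pi_0 S$ (so $Z = \bZ$ for $S = \bS$ and $Z = \bZ_{(p)}$ for $S = \bS_{(p)}$), and then to compare the rational image of $K(Z) \to K(\bQ)$ with the rational image of $K(S) \to K(\bQ)$ given in the discussion preceding~\eqref{eq:isoexcept1}.

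For the factorization, I would first observe that the map $L^{p,f}_n S \to H\bQ$ factors uniquely through the connective cover $\tau_{\geq 0} L^{p,f}_n S$: indeed, for any spectrum $A$ concentrated in strictly negative degrees, the Künneth isomorphism gives $H_q(A;\bZ) = \pi_q A$, and injectivity of $\bQ$ as an abelian group then forces $H^0(A;\bQ) = H^1(A;\bQ) = 0$, making the restriction map $[X, H\bQ] \to [\tau_{\geq 0} X, H\bQ]$ an isomorphism.  Granting the identification $\pi_0 \tau_{\geq 0} L^{p,f}_n S = Z$, the Postnikov section $\tau_{\geq 0} L^{p,f}_n S \to HZ$ composed with $HZ \to H\bQ$ recovers the given map, and this factorization is natural in $n$. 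Applying $K$-theory and taking homotopy limits yields the factorization
\[
\holim_n K(L^{p,f}_n S) \longrightarrow K(Z) \longrightarrow K(\bQ).
\]

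The rational image of $K(Z) \to K(\bQ)$ is identified via the Quillen localization sequence: since $K_q(\bF_\ell)$ is torsion for $q \geq 1$, the fiber $\bigvee_\ell K(\bF_\ell)$ (respectively $\bigvee_{\ell \neq p} K(\bF_\ell)$) is rationally concentrated in degree $0$, so $K(Z)_\bQ \to K(\bQ)_\bQ$ is an isomorphism in degrees $\neq 1$ and has image $K_1(Z) \otimes \bQ$ in degree $1$: this is $0$ for $Z = \bZ$ (as $\bZ^{\times} = \{\pm 1\}$ is torsion) and $\bigoplus_{\ell \neq p} \bQ$ for $Z = \bZ_{(p)}$. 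These coincide with the images of $\pi_*(K(S);\bQ) \to \pi_*(K(\bQ);\bQ)$ recorded in the text just before~\eqref{eq:isoexcept1}, and the lemma follows.

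The main obstacle is verifying $\pi_0 \tau_{\geq 0} L^{p,f}_n S = Z$, which is not automatic since $L^{p,f}_n S$ need not be connective (at telescopic height $n \geq 1$ it carries negative homotopy from $K(n)$-local pieces). Once this is in hand, the Postnikov factorization and the Borel/Quillen image comparison finish the argument.
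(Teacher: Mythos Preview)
Your proposed factorization through $K(Z)$ does not work. The core problem is that there is no ring map $L^{p,f}_{n}S \to HZ$ for $n\geq 1$: if there were, $HZ$ would be an $L^{p,f}_{n}S$-algebra and hence $L^{p,f}_{n}$-local, but $H\bZ$ and $H\bZ_{(p)}$ are \emph{not} $L^{p,f}_{n}$-local, since any finite type~$(n{+}1)$ complex $V$ has nonzero integral cohomology. Your observation that $[L^{p,f}_{n}S, H\bQ] \cong [\tau_{\geq 0}L^{p,f}_{n}S, H\bQ]$ is correct, but it only says that the map $L^{p,f}_{n}S \to H\bQ$ is determined by its restriction along $\tau_{\geq 0}L^{p,f}_{n}S \to L^{p,f}_{n}S$; it does not produce a map $L^{p,f}_{n}S \to \tau_{\geq 0}L^{p,f}_{n}S$ (the connective cover goes the other way), so you cannot factor $K(L^{p,f}_{n}S)\to K(\bQ)$ through $K(\tau_{\geq 0}L^{p,f}_{n}S)$ or $K(Z)$. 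Even the assertion $\pi_{0}L^{p,f}_{n}S = Z$ fails at $n=0$: one has $L^{f}_{0}\bS \simeq H\bQ$ and $L^{p,f}_{0}\bS \simeq \bS[1/p]$.

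The paper's proof reflects exactly this obstruction. For $S=\bS$ it factors the map through $K(L^{p,f}_{0}\bS)\to K(\bZ[1/p])$ (legitimate because $\bS[1/p]$ is connective), which lands in $\ker\nu_{\ell}$ for all $\ell\neq p$; but no such cheap factorization kills $\nu_{p}$, and the paper instead pushes the map through the cyclotomic trace and the Dundas--Goodwillie--McCarthy square into $TC(\bZ\phat)\phat\hookrightarrow L_{K(1)}TC(\bZ\phat)$, where $\nu_{p}$ vanishes for structural reasons. Handling $\nu_{p}$ is the genuine content of the lemma, and your approach sidesteps rather than addresses it.
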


We observe below that this implies that the two composite maps
\[
\pi_{*}(\holim K(L^{p,f}_{n}S))\to
\pi_{*}((\holim L_{n}K(S))\phat;{\bQ})
\to \pi_{*}(L_{K(1)}K(\bQ);{\bQ})
\]
coincide.  The following lemma then implies that this gives the bulk of the
rational homotopy group calculation.

\begin{lem}\label{lem:arithinj}
The map $\pi_{*}((\holim L_{n}K(S))\phat;\bQ)\to \pi_{*}(L_{K(1)}K(\bQ);\bQ)$
is an injection for $*\geq 0$.
\end{lem}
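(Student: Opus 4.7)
The plan is to factor the map of the lemma as a composite
\[
(\holim L_{n}K(S))\phat \overto{\alpha} L_{K(1)}K(\pi_{0}S) \overto{\beta} L_{K(1)}K(\bQ),
\]
and verify that each factor induces an injection on $\pi_{*}(-;\bQ)$ in non-negative degrees. Here $\alpha$ will arise from the homotopy cartesian square of Theorem~\ref{thm:L1DGM} for $R=S$, together with Proposition~\ref{prop:fixTCS} and the chromatic collapse $\holim L_{n}K(Z)\simeq L_{1}K(Z)$ for $Z=\pi_{0}S$ (via Mitchell and Proposition~\ref{prop2}) followed by $p$-completion and the identification $(L_{1}K(Z))\phat\simeq L_{K(1)}K(Z)$; the factor $\beta$ is induced by the canonical map $\pi_{0}S\to\bQ$.

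For the factor $\beta$, the Quillen localization sequence will identify its fiber. Since $K(\bF_{p})\phat\simeq H\bZ\phat$ is $K(1)$-acyclic, $L_{K(1)}K(\bF_{p})=0$, so for $S=\bS_{(p)}$ (where $\pi_{0}S=\bZ_{(p)}$) the map $\beta$ is a weak equivalence and injectivity is automatic. For $S=\bS$, the fiber of $\beta$ is $\bigvee_{\ell\neq p}L_{K(1)}K(\bF_{\ell})$, and using $L_{K(1)}K(\bF_{\ell})\simeq (KU\phat)^{h\psi^{\ell}}$ together with the homotopy fixed-point spectral sequence, $\pi_{q}L_{K(1)}K(\bF_{\ell})\otimes\bQ$ is supported only in degrees $q=0,-1$ (from weight $k=0$); the other weights contribute finite $p$-torsion that vanishes rationally. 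Injectivity in the non-negative range then reduces to the degree-$0$ case, where the relevant map is the $\bQ_{p}$-linearization of the Quillen transfer $K_{0}(\bF_{\ell})\to K_{0}(\bZ)$; this vanishes via the free resolution $0\to\bZ\overto{\ell}\bZ\to\bF_{\ell}\to 0$.

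For the factor $\alpha$, Theorem~\ref{thm:L1DGM} together with Proposition~\ref{prop:fixTCS} identify its fiber with the $p$-completion of $\Fib(TC(S)\to L_{K(1)}TC(\pi_{0}S))$. The fiber sequence $F(K(S))\phat\to K(S)\phat\to(\holim L_{n}K(S))\phat$, combined with the explicit descriptions of $F(K(S))$ given by Theorems~\ref{main:FS} and~\ref{main:FSploc} (whose homotopy sits in degrees $\leq-2$, giving $F(K(S))\phat$ in degrees $\leq-1$), shows that $K(S)\phat\to(\holim L_{n}K(S))\phat$ is an isomorphism on $\pi_{q}$ for $q\geq 1$ and an injection on $\pi_{0}$. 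Injectivity of $\alpha$ in positive degrees is then pinned down by understanding the DGM linearization fiber $\Fib(K(S)\phat\to K(\pi_{0}S)\phat)$ together with the $L_{K(1)}$-localization fiber for $TC(\pi_{0}S)$. The main obstacle will be handling the degree-$0$ contribution in the case $S=\bS_{(p)}$, where $\pi_{-1}F(K(\bS_{(p)}))\phat$ is nonzero (arising from $\bZ/p^{\infty}$-torsion in $\pi_{-2}F$); in this case, however, $\beta$ is a weak equivalence, and the plan is to verify injectivity at $\pi_{0}$ directly by matching the extra $\pi_{0}$-contribution on the left with the corresponding $\pi_{0}$-structure of $L_{K(1)}K(\bZ_{(p)})\simeq L_{K(1)}K(\bQ)$ arising from Quillen localization.
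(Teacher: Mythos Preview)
Your factorization $\alpha\circ\beta$ matches the paper's, and your treatment of $\beta$ is essentially the same: for $S=\bS_{(p)}$ the map $\beta$ is a weak equivalence, and for $S=\bS$ the Quillen localization sequence together with the rational triviality of $\pi_{q}L_{K(1)}K(\bF_{\ell})$ for $q\neq 0,-1$ reduces to the degree-$0$ transfer, which vanishes. (One small correction: the fiber of $\beta$ for $S=\bS$ is $L_{K(1)}\bigl(\bigvee_{\ell}K(\bF_{\ell})\bigr)$ over \emph{all} primes, which is the $p$-completion of $\bigvee_{\ell}L_{K(1)}K(\bF_{\ell})$, not the wedge itself; this distinction matters in principle for an infinite wedge, though it does not break the argument here.)

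The gap is in your treatment of $\alpha$. You correctly identify its fiber via Theorem~\ref{thm:L1DGM} and Proposition~\ref{prop:fixTCS}, but you never supply the two facts that actually drive injectivity in the case $S=\bS$: Waldhausen's theorem that $K(\bS)\to K(\bZ)$ is a rational equivalence (with finitely generated homotopy, so that the $p$-completed map remains a rational equivalence), and the resolved Quillen--Lichtenbaum conjecture for $\bZ$, which gives that $K(\bZ)\phat\to L_{K(1)}K(\bZ)$ is an isomorphism on $\pi_{q}$ for $q\geq 1$. Your phrase ``pinned down by understanding the DGM linearization fiber \ldots\ together with the $L_{K(1)}$-localization fiber for $TC(\pi_{0}S)$'' gestures in this direction but does not name these inputs, and without them there is no argument.

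For $S=\bS_{(p)}$ the outline is more seriously incomplete. Here $K(\bS_{(p)})$ does not have finitely generated homotopy groups, so the argument above does not transfer directly. The paper's device is to show that the \emph{cofiber} of $\alpha$ is the same for $S=\bS_{(p)}$ as for $S=\bS$: both cofibers are identified (via Theorem~\ref{thm:L1DGM} and Proposition~\ref{prop:fixTCS}) with the cofiber of $TC(S)\phat\to L_{K(1)}TC(\pi_{0}S)$, and the comparison maps $TC(\bS)\phat\to TC(\bS_{(p)})\phat$ and $L_{K(1)}TC(\bZ)\to L_{K(1)}TC(\bZ_{(p)})$ are weak equivalences. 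This reduces the $\bS_{(p)}$ case in degrees $q>1$ to the already-handled case $S=\bS$. The $\pi_{0}$ step then also runs through the $\bS$ case: knowing $\alpha$ is injective on $\pi_{0}(-;\bQ)$ for $\bS$ shows $L_{K(1)}K(\bZ)\to\Cof$ is surjective on $\pi_{1}(-;\bQ)$, hence so is its factor $L_{K(1)}K(\bZ_{(p)})\to\Cof$, giving the needed injectivity for $\bS_{(p)}$. Your ``verify injectivity at $\pi_{0}$ directly by matching the extra $\pi_{0}$-contribution'' does not indicate this reduction and is not an argument as written.
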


Before proving these lemmas, we show how they prove Theorem~\ref{thm:arith}.

\begin{proof}[Proof of Theorem~\ref{thm:arith}]
As discussed above, since the target spectrum is rational, it suffices
to show that the two maps on homotopy groups
\[
\pi_{*}(\holim K(L^{p,f}_{n}S))\to
\pi_{*}((\holim K(L^{p,f}_{n}S))_{(p)})\to 
\pi_{*}((\holim L_{n}K(S))\phat;{\bQ})
\]
coincide.

Since $K(S)_{\bQ}\to K(\bQ)_{\bQ}$ is a split monomorphism, 
Lemma~\ref{lem:arithfact} together with diagram~\eqref{eq:bigarith}
give the commuting diagram
\[
\xymatrix@C-1pc{%
\holim K(L^{p,f}_{n}S)\ar[rr]\ar[d]&&(\holim L_{n}K(S))\phat\ar[d]\\
K(S)_{\bQ}\ar[r]&(L_{K(1)}K(\bQ))_{\bQ}
&((\holim L_{n}K(S))\phat)_{\bQ}.\ar[l] 
}
\]
By naturality, the diagram
\[
\xymatrix@C-1pc{%
K(S)_{\bQ}\ar[r]^-{\simeq}\ar[dr]
&(\holim L_{n}K(S))_{\bQ}\ar[d]\ar[r]
&((\holim L_{n}K(S))\phat)_{\bQ}\ar[dl]\\
&(L_{K(1)}K(\bQ))_{\bQ}
}
\]
also commutes.  It follows that the two composite maps 
\[
\pi_{*}(\holim K(L^{p,f}_{n}S))\to
\pi_{*}((\holim L_{n}K(S))\phat;{\bQ})
\to \pi_{*}((L_{K(1)}K(\bQ));{\bQ})
\]
coincide.
By Theorems~\ref{main:FS} and~\ref{main:FSploc}, we
have that $\pi_{-1}(\holim L_{n}K(S)\phat;\bQ)=0$.
Lemma~\ref{lem:arithinj} and the fact that  $\holim
K(L^{p,f}_{n}S)$ is $(-2)$-connected then imply that the two maps
\[
\pi_{*}(\holim K(L^{p,f}_{n}S))\to
\pi_{*}((\holim L_{n}K(S))\phat;{\bQ})
\]
coincide
\end{proof}

We now prove the lemmas.

\begin{proof}[Proof of Lemma~\ref{lem:arithfact}]
By the remarks surrounding (\ref{eq:isoexcept1}), it suffices to analyze the case $*=1$.  
As reviewed above, we have 
\[
\pi_{1}(K(\bQ);{\bQ})\iso(\bQ^{\times})\otimes \bQ \iso \bigoplus_{\ell\text{ prime}}\bQ,
\]
and we write $\nu_{\ell}\colon \pi_{1}(K(\bQ);{\bQ})\to \bQ$ for the
projection to the $\ell$ factor; this homomorphism is induced by
$\ell$-adic valuation on $\bQ^{\times}$.  The image of
$\pi_{1}K(\bS_{(p)};\bQ)$ is the kernel of $\nu_{p}$ and the image of
$\pi_{1}K(\bS;\bQ)$ is zero, or equivalently, the intersection of the
kernel of $\nu_{\ell}$ for all $\ell$.

In the case $S=\bS$, the map $\holim K(L^{p,f}_{n}\bS)\to K(\bQ)$
factors through $K(L^{p,f}_{0}\bZ)=K(\bZ[1/p])$.  We see from the
commuting diagram
\[
\xymatrix@R-1pc{%
\bZ[1/p]^{\times}\ar[r]^-{\iso}\ar[d]&\pi_{1}(K(\bZ[1/p]))\ar[d]\\
\bQ^{\times}\ar[r]_-{\iso}&\pi_{1}(K(\bQ))
}
\]
that the image of $\pi_{1}(K(\bZ[1/p]);\bQ)$ in $\pi_{1}(K(\bQ);\bQ)$
lands in the kernel of $\nu_{\ell}$ for all $\ell\neq p$.  To see that
the image of $\pi_{1}(\holim K(L^{p,f}_{n}\bS);\bQ)$ also lands in the
kernel of $\nu_{p}$, we note that it lands in the image of
$\pi_{1}(\holim K(L^{p}_{n}\bS);\bQ)$, and this reduces the case
$S=\bS$ to the case $S=\bS_{(p)}$.

We are left to establish the case $S=\bS_{(p)}$. (As above, we write
$L^{f}_{n}\bS$ for $L^{p,f}_{n}S$ in the case $S=\bS_{(p)}$.) Because
$\pi_{0}K(\bS_{(p)})\iso \pi_{0}K(\bQ)\iso \bZ$ is 
torsion-free, $\pi_{1}(K(\bS_{(p)})\phat)$ and $\pi_{1}(K(\bQ)\phat)$
are the left $0$-derived $p$-completions of $\pi_{1}(K(\bS_{(p)}))\iso \bZ_{(p)}^{\times}$ and
$\pi_{1}(K(\bQ))\iso \bQ^{\times}$.  The homomorphism $\nu_{p}$ then extends to a
homomorphism $\nu_{p}\colon \pi_{1}(K(\bQ)\phat;\bQ)\to \bQ\phat$ with kernel
$\pi_{1}(K(\bS_{(p)})\phat;\bQ)$.  We have an analogous homomorphism $\nu_{p}\colon
\pi_{1}(K(\bQ\phat)\phat;\bQ)\to \bQ\phat$ and a commuting diagram of
short exact sequences
\[
\xymatrix@-1pc{%
0\ar[r]
&\pi_{1}(K(\bS_{(p)});\bQ)\ar[r]\ar[d]
&\pi_{1}(K(\bQ);\bQ)\ar[r]^-{\nu_{p}}\ar[d]
&\bQ\ar[r]\ar[d]^{\subset}&0\\
0\ar[r]
&\pi_{1}(K(\bS_{(p)})\phat;\bQ)\ar[r]\ar[d]
&\pi_{1}(K(\bQ)\phat;\bQ)\ar[r]^-{\nu_{p}}\ar[d]
&\bQ\phat\ar[r]\ar[d]^{=}&0\\
0\ar[r]
&\pi_{1}(K(\bZ\phat)\phat;\bQ)\ar[r]
&\pi_{1}(K(\bQ\phat)\phat;\bQ)\ar[r]^-{\nu_{p}}&\bQ\phat\ar[r]&0.
}
\]
To complete the proof, we need to see that the composite of
$\pi_{1}(\holim K(L^{f}_{n}\bS);\bQ)\to \pi_{1}(K(\bQ\phat);\bQ)$ with
$\nu_{p}$ is zero. 

By~\cite[Thm~D]{HM2}, we have a weak equivalence 
\[
K(\bZ\phat)\phat\overto{\simeq}\tau_{\geq 0}(TC(\bZ\phat)\phat).
\]
Together with Proposition~\ref{prop:pidinvp}, this gives weak equivalences
\[
L_{K(1)}TC(\bZ\phat)\overfrom{\simeq}L_{K(1)}K(\bZ\phat)\overto{\simeq}L_{K(1)}K(\bQ\phat),
\]
and the resolved Quillen-Lichtenbaum
conjecture~\cite[Thm.~A]{HMAnnals} for $\bQ\phat$ implies the weak
equivalence
\[
\tau_{\geq 1}K(\bQ\phat)\overto{\simeq}\tau_{\geq 1}L_{K(1)}K(\bQ\phat).
\]
We then get a commuting diagram of short exact sequences
\[
\xymatrix@-1pc{%
0\ar[r]
&\pi_{1}(K(\bZ\phat)\phat;\bQ)\ar[r]\ar[d]^{=}
&\pi_{1}(K(\bQ\phat)\phat;\bQ)\ar[r]^-{\nu_{p}}\ar[d]^{\iso}&\bQ\phat\ar[r]\ar[d]^{=}&0\\
0\ar[r]
&\pi_{1}(K(\bZ\phat)\phat;\bQ)\ar[r]\ar@{<-}[d]^{=}
&\pi_{1}(L_{K(1)}K(\bQ\phat);\bQ)\ar[r]\ar@{<-}[d]^{\iso}
&\bQ\phat\ar[r]\ar@{<-}[d]^{=}&0\\
0\ar[r]
&\pi_{1}(K(\bZ\phat)\phat;\bQ)\ar[r]\ar[d]^{\iso}
&\pi_{1}(L_{K(1)}K(\bZ\phat);\bQ)\ar[r]\ar[d]^{\iso}
&\bQ\phat\ar[r]\ar[d]^{=}&0\\
0\ar[r]
&\pi_{1}(TC(\bZ\phat)\phat;\bQ)\ar[r]
&\pi_{1}(L_{K(1)}TC(\bZ\phat);\bQ)\ar[r]^-{\nu}&\bQ\phat\ar[r]&0
}
\]
By~\eqref{eq:bigarith} and naturality, the diagram
\[
\xymatrix@C-1pc{%
\pi_{1}(\holim K(L^{f}_{n}\bS);\bQ)\ar[rr]\ar[d]
&&\pi_{1}(K(\bQ\phat)\phat;\bQ)\ar[d]^{\iso}\ar[dr]^{\nu_{p}}\\
\pi_{1}((\holim L_{n}K(\bS_{(p)}))\phat;\bQ)\ar[r]\ar[d]
&\pi_{1}(L_{K(1)}K(\bZ\phat);\bQ)\ar[r]^{\iso}\ar[dr]^{\iso}
&\pi_{1}(L_{K(1)}K(\bQ\phat);\bQ)\ar[r]&\bQ\phat\\
\pi_{1}(TC(\bS_{(p)})\phat;\bQ)\ar[r]
&\pi_{1}(TC(\bZ\phat)\phat;\bQ)\ar[r]
&\pi_{1}(L_{K(1)}TC(\bZ\phat);\bQ)\ar[ur]_{\nu}
}
\]
commutes, and we see that the composite 
$\pi_{1}(\holim K(L^{f}_{n}\bS);\bQ)\to \bQ\phat$ factors through
$\pi_{1}(TC(\bZ\phat)\phat;\bQ)$, which is in the kernel of $\nu$.
\end{proof}

\begin{proof}[Proof of Lemma~\ref{lem:arithinj}]
In the case $S=\bS$, consider the following commutative diagram.
\[
\xymatrix{%
K(\bS)\phat\ar[r]\ar[d]&K(\bZ)\phat\ar[d]\\
\holim L_{n}K(\bS)\phat\ar[r]&L_{K(1)}K(\bZ)\ar[r]&L_{K(1)}K(\bQ)
}
\]
We claim that all maps are injections on $\pi_{q}(-;\bQ)$ for $q\geq
0$.  The top horizontal map is a rational equivalence because the
linearization map $K(\bS)\to K(\bZ)$ is a rational
equivalence~\cite[2.3.8ff]{WaldhausenKT} and the 
homotopy groups of $K(\bS)$ and $K(\bZ)$ are finitely
generated~\cite[Theorem~1]{Quillen-FGK}, \cite[1.2]{DwyerAX}.  The
left vertical map is a rational equivalence by Theorem~\ref{main:FS} 
(or Proposition~\ref{prop:integral}).  The established
Quillen-Lichtenbaum conjecture implies that the right vertical map is
an isomorphism on $\pi_{q}$ for $q>1$.  For $q=1$,
$\pi_{1}(K(\bZ)\phat;\bQ)=0$ and for $q=0$,
$\pi_{0}(K(\bZ)\phat;\bQ)\iso\bQ\phat$ generated by the unit of the ring
structure, and so the map is an injection on these groups as
well. This establishes the claim for the right vertical map,
which then establishes the claim for the lefthand bottom
horizontal map.  For the last map, we look at the $K(1)$-localization
of the Quillen localization sequence for~$\bQ$,
\[
L_{K(1)}\biggl(\bigvee_{\ell\text{ prime}} K(\bF_{\ell})\biggr)\to 
L_{K(1)}K(\bZ)\to L_{K(1)}K(\bQ)\to \Sigma \dotsb.
\]
Except in degrees $0$ and $-1$, the homotopy groups of each
$L_{K(1)}K(\bF_{\ell})$ are finite and in odd degrees; in degrees $0$
and $-1$, the groups are $\bZ\phat$.  For
$q>0$, we have
$\pi_{q}(L_{K(1)}K(\bZ);\bQ)=0$ for $q\equiv 0,2,3\pmod 4$,
while $\pi_{q}(L_{K(1)}K(\bZ))\iso \bZ\phat$ or $\bZ\phat[2]\oplus
\bZ/2$ for $q\equiv 1\pmod 4$, and $\pi_{0}(L_{K(1)}K(\bZ))\iso
\bZ\phat$.  Since the $K(1)$-localization of the
wedge is the $p$-completion of the wedge of the $K(1)$-localizations,
the map
\[
L_{K(1)}\biggl(\bigvee_{\ell\text{ prime}} K(\bF_{\ell})\biggr)\to 
L_{K(1)}K(\bZ)
\]
must be zero on $\pi_{q}(-;\bQ)$ for $q\geq 1$, and by inspection, it
is also zero for $q=0$ since each map 
\[
\pi_{0}(L_{K(1)}K(\bF_{\ell}))\to
\pi_{0}(L_{K(1)}K(\bZ))
\]
is the zero map and the contribution of $\pi_{-1}(-)$ to
$\pi_{0}((-)\phat)$ in this case is 
\[
\Hom(\bZ/p^{\infty},\bigoplus \bZ\phat)=0.
\]
This
proves the claim for the map $L_{K(1)}K(\bZ)\to L_{K(1)}K(\bQ)$ and
completes the proof of the lemma in the case $S=\bS$. 

In the case $S=\bS_{(p)}$, we have that the map
$L_{K(1)}K(\bZ_{(p)})\to L_{K(1)}K(\bQ)$ is a weak equivalence (since
the fiber is $L_{K(1)}K(\bF_{p})\simeq *$).  Thus, it suffices to see
that the map 
\begin{equation}\label{eq:lasteq}
\holim L_{n}K(\bS_{(p)})\phat\to L_{K(1)}K(\bZ_{(p)})
\end{equation}
is injective on rational homotopy groups in non-negative degrees.  
For the purposes of this argument, denote the cofiber of the
map~\eqref{eq:lasteq} as $\Cof$.  By 
Theorem~\ref{thm:L1DGM} and Proposition~\ref{prop:fixTCS}, $\Cof$ is
weakly equivalent to the cofiber of the map 
\[
TC(\bS_{(p)})\phat\to L_{K(1)}TC(\bZ_{(p)}).
\]
In the diagram
\[
\xymatrix{%
TC(\bS)\phat\ar[r]\ar[d]&L_{K(1)}TC(\bZ)\ar[d]\\
TC(\bS_{(p)})\phat\ar[r]&L_{K(1)}TC(\bZ_{(p)})
}
\]
the vertical maps are weak equivalences (for example,
by~\cite[2.5]{BMY1}), and so the horizontal maps have equivalent
cofibers.  Applying Theorem~\ref{thm:L1DGM} and
Proposition~\ref{prop:fixTCS} again, we see that the cofiber on top is
weakly equivalent to the cofiber of the map
\begin{equation}\label{eq:reallylasteq}
\holim L_{n}K(\bS)\phat\to L_{K(1)}K(\bZ).
\end{equation}
Work of the previous paragraph now shows that this cofiber (and
therefore also $\Cof$) satisfies
$\pi_{q}(-;\bQ)=0$ for $q>1$.   Thus, the map~\eqref{eq:lasteq} is
injective on $\pi_{q}(-;\bQ)$ for $q\geq 1$.  

It remains to check that the map~\eqref{eq:lasteq} is
injective on $\pi_{0}(-;\bQ)$.  In the work on the case $S=\bS$, we
showed that the map~\eqref{eq:reallylasteq} 
is injective on $\pi_{0}(-;\bQ)$ and it follows from the work in the
previous paragraph that the map $L_{K(1)}K(\bZ)\to \Cof$
is surjective on $\pi_{1}(-;\bQ)$.  Since this map factors through
the map $L_{K(1)}K(\bZ_{(p)})\to \Cof$, that map is also surjective on
$\pi_{1}(-;\bQ)$. We conclude that the map~\eqref{eq:lasteq} is
injective on $\pi_{0}(-;\bQ)$, which completes the proof of the lemma.
\end{proof}

%%%%%%%%%%%%%%%%%%%%%%%%%%%%%%%%%%%%%%%%%%%%%%%%%%%%%%%%%%%%%%%%%%%%%%%%
% Bibliography
%%%%%%%%%%%%%%%%%%%%%%%%%%%%%%%%%%%%%%%%%%%%%%%%%%%%%%%%%%%%%%%%%%%%%%%%

\bibliographystyle{plain}
\bibliography{bluman}

\ifx\nolinkurl\undefined\def\nolinkurl#1{\texttt{\chardef~126\relax #1}}\fi
\begin{thebibliography}{10}

\bibitem{Anderson-UCT}
D.~W. Anderson.
\newblock Universal coefficient theorems for {$K$}-theory.
\newblock Mimeographed notes.

\bibitem{AngeliniKnollQuigley-Segal}
Gabriel Angelini-Knoll and J.~D. Quigley.
\newblock The {S}egal conjecture for topological {H}ochschild homology of
  {R}avenel spectra.
\newblock {\em J. Homotopy Relat. Struct.}, 16(1):41--60, 2021.

\bibitem{AngeliniKnoll-Periodicity}
Gabriel~J. Angelini-Knoll.
\newblock {\em Periodicity in {I}terated {A}lgebraic {K}-{T}heory of {F}inite
  {F}ields}.
\newblock ProQuest LLC, Ann Arbor, MI, 2017.
\newblock Thesis (Ph.D.)--Wayne State University.

\bibitem{AusoniRognes}
Christian Ausoni and John Rognes.
\newblock Algebraic {$K$}-theory of topological {$K$}-theory.
\newblock {\em Acta Math.}, 188(1):1--39, 2002.

\bibitem{Barthel-ChromaticConvergence}
Tobias Barthel.
\newblock Chromatic completion.
\newblock {\em Proc. Amer. Math. Soc.}, 144(5):2263--2274, 2016.

\bibitem{BhattClausenMathew-RemarksK1}
Bhargav Bhatt, Dustin Clausen, and Akhil Mathew.
\newblock Remarks on {$K (1)$}-local {$K$}-theory.
\newblock {\em Selecta Math. (N.S.)}, 26(3):Paper No. 39, 16, 2020.

\bibitem{BM-KSpi}
Andrew~J. Blumberg and Michael~A. Mandell.
\newblock The homotopy groups of the algebraic {$K$}-theory of the sphere
  spectrum.
\newblock {\em Geom. Topol.}, 23(1):101--134, 2019.

\bibitem{BM-ktpd}
Andrew~J. Blumberg and Michael~A. Mandell.
\newblock {$K$}-theoretic {T}ate-{P}oitou duality and the fiber of the
  cyclotomic trace.
\newblock {\em Invent. Math.}, 221(2):397--419, 2020.

\bibitem{BM-kstub}
Andrew~J. Blumberg and Michael~A. Mandell.
\newblock The eigensplitting of the fiber of the cyclotomic trace for the
  sphere spectrum.
\newblock {\em Trans. Amer. Math. Soc.}, 376(4):2853--2874, 2023.

\bibitem{BMY1}
Andrew~J. Blumberg, Michael~A. Mandell, and Allen Yuan.
\newblock A version of {W}aldhausen's chromatic convergence for {$TC$}.
\newblock {\em Bull. Lond. Math. Soc.}, 55(2):945--960, 2023.

\bibitem{BHM}
M.~B{\"o}kstedt, W.~C. Hsiang, and I.~Madsen.
\newblock The cyclotomic trace and algebraic {$K$}-theory of spaces.
\newblock {\em Invent. Math.}, 111(3):465--539, 1993.

\bibitem{Bokstedt-HatcherWaldhausen}
Marcel B\"{o}kstedt.
\newblock The rational homotopy type of {$\Omega {\rm Wh}^{{\rm Diff}}(\ast)$}.
\newblock In {\em Algebraic topology, {A}arhus 1982 ({A}arhus, 1982)}, volume
  1051 of {\em Lecture Notes in Math.}, pages 25--37. Springer, Berlin, 1984.

\bibitem{Telescope2023}
Robert Burklund, Jeremy Hahn, Ishan Levy, and Tomer~M. Schlank.
\newblock {$K$}-theoretic counterexamples to {R}avenel's telescope conjecture.
\newblock Preprint, arXiv:2310.17459, 2023.

\bibitem{CSY-AmbiHeight}
Shachar Carmeli, Tomer~M. Schlank, and Lior Yanovski.
\newblock Ambidexterity and height.
\newblock {\em Adv. Math.}, 385:107763, 90, 2021.

\bibitem{Cho-ktpd}
Myungsin Cho.
\newblock K-theoretic {T}ate-{P}oitou duality at prime 2.
\newblock {\em Adv. Math.}, 477:Article~110370, 2025.

\bibitem{CMNN-Descent}
Dustin Clausen, Akhil Mathew, Niko Naumann, and Justin Noel.
\newblock Descent and vanishing in chromatic algebraic {$K$}-theory via group
  actions.
\newblock {\em Ann. Sci. \'Ec. Norm. Sup\'er. (4)}, 57(4):1135--1190, 2024.

\bibitem{DundasGoodwillieMcCarthy}
Bj\o rn~Ian Dundas, Thomas~G. Goodwillie, and Randy McCarthy.
\newblock {\em The local structure of algebraic {K}-theory}, volume~18 of {\em
  Algebra and Applications}.
\newblock Springer-Verlag London, Ltd., London, 2013.

\bibitem{Dundas-RelK}
Bj{\o}rn~Ian Dundas.
\newblock Relative {$K$}-theory and topological cyclic homology.
\newblock {\em Acta Math.}, 179(2):223--242, 1997.

\bibitem{DwyerAX}
W.~G. Dwyer.
\newblock Twisted homological stability for general linear groups.
\newblock {\em Ann. of Math. (2)}, 111(2):239--251, 1980.

\bibitem{DwyerFriedlander-ConjecturalCalculations}
W.~G. Dwyer and E.~M. Friedlander.
\newblock Conjectural calculations of general linear group homology.
\newblock In {\em Applications of algebraic {$K$}-theory to algebraic geometry
  and number theory, {P}art {I}, {II} ({B}oulder, {C}olo., 1983)}, volume~55 of
  {\em Contemp. Math.}, pages 135--147. Amer. Math. Soc., Providence, RI, 1986.

\bibitem{DwyerFriedlander-EtaleK}
William~G. Dwyer and Eric~M. Friedlander.
\newblock Algebraic and etale {$K$}-theory.
\newblock {\em Trans. Amer. Math. Soc.}, 292(1):247--280, 1985.

\bibitem{GoodwillieHN}
Thomas~G. Goodwillie.
\newblock Relative algebraic {$K$}-theory and cyclic homology.
\newblock {\em Ann. of Math. (2)}, 124(2):347--402, 1986.

\bibitem{Goodwillie-MSRI}
Thomas~G. Goodwillie.
\newblock Notes on the cyclotomic trace.
\newblock MSRI Lecture Notes, 1991.

\bibitem{HahnWilson-RedshiftBPn}
Jeremy Hahn and Dylan Wilson.
\newblock Redshift and multiplication for truncated {B}rown-{P}eterson spectra.
\newblock {\em Ann. of Math. (2)}, 196(3):1277--1351, 2022.

\bibitem{HM2}
Lars Hesselholt and Ib~Madsen.
\newblock On the {$K$}-theory of finite algebras over {W}itt vectors of perfect
  fields.
\newblock {\em Topology}, 36(1):29--101, 1997.

\bibitem{HMAnnals}
Lars Hesselholt and Ib~Madsen.
\newblock On the {$K$}-theory of local fields.
\newblock {\em Ann. of Math. (2)}, 158(1):1--113, 2003.

\bibitem{LMMT}
Markus Land, Akhil Mathew, Lennart Meier, and Georg Tamme.
\newblock Purity in chromatically localized algebraic {$K$}-theory.
\newblock {\em J. Amer. Math. Soc.}, 37(4):1011--1040, 2024.

\bibitem{McCarthy}
Randy McCarthy.
\newblock Relative algebraic {$K$}-theory and topological cyclic homology.
\newblock {\em Acta Math.}, 179(2):197--222, 1997.

\bibitem{Miller-FiniteLocalizations}
Haynes Miller.
\newblock Finite localizations.
\newblock {\em Bol. Soc. Mat. Mexicana (2)}, 37(1-2):383--389, 1992.
\newblock Papers in honor of Jos\'{e} Adem (Spanish).

\bibitem{Mitchell-MoravaKofAlgK}
S.~A. Mitchell.
\newblock The {M}orava {$K$}-theory of algebraic {$K$}-theory spectra.
\newblock {\em $K$-Theory}, 3(6):607--626, 1990.

\bibitem{Quillen-FGK}
Daniel Quillen.
\newblock Finite generation of the groups {$K_{i}$} of rings of algebraic
  integers.
\newblock In {\em Algebraic {$K$}-theory, {I}: {H}igher {$K$}-theories ({P}roc.
  {C}onf., {B}attelle {M}emorial {I}nst., {S}eattle, {W}ash., 1972)}, pages
  179--198. Lecture Notes in Math., Vol. 341. Springer, Berlin, 1973.

\bibitem{Ravenel-Periodic}
Douglas~C. Ravenel.
\newblock Localization with respect to certain periodic homology theories.
\newblock {\em Amer. J. Math.}, 106(2):351--414, 1984.

\bibitem{Rognes-Oberwolfach2000}
John Rognes.
\newblock Algebraic ${K}$-theory of finitely presented spectra.
\newblock Lecture notes. Available at
  \nolinkurl{https://www.mn.uio.no/math/personer/vit/rognes/papers/red-shift.pdf},
  2000.

\bibitem{Rognes2}
John Rognes.
\newblock Two-primary algebraic {$K$}-theory of pointed spaces.
\newblock {\em Topology}, 41(5):873--926, 2002.

\bibitem{Rognesp}
John Rognes.
\newblock The smooth {W}hitehead spectrum of a point at odd regular primes.
\newblock {\em Geom. Topol.}, 7:155--184 (electronic), 2003.

\bibitem{Rognes-MSRI}
John Rognes.
\newblock Chromatic redshift.
\newblock Lecture notes. Available at
  \nolinkurl{https://www.mn.uio.no/math/personer/vit/rognes/papers/chresh.pdf},
  2014.

\bibitem{Thomason-QLC}
R.~W. Thomason.
\newblock The {L}ichtenbaum-{Q}uillen conjecture for {$K/l\sb\ast [\beta
  \sp{-1}]$}.
\newblock In {\em Current trends in algebraic topology, {P}art 1 ({L}ondon,
  {O}nt., 1981)}, volume~2 of {\em CMS Conf. Proc.}, pages 117--139. Amer.
  Math. Soc., Providence, R.I., 1982.

\bibitem{ThomasonEtale}
R.~W. Thomason.
\newblock Algebraic {$K$}-theory and \'etale cohomology.
\newblock {\em Ann. Sci. \'Ecole Norm. Sup. (4)}, 18(3):437--552, 1985.

\bibitem{ThomasonTrobaugh}
R.~W. Thomason and Thomas Trobaugh.
\newblock Higher algebraic {$K$}-theory of schemes and of derived categories.
\newblock In {\em The Grothendieck Festschrift, Vol.\ III}, volume~88 of {\em
  Progr. Math.}, pages 247--435. Birkh\"auser Boston, Boston, MA, 1990.

\bibitem{Veen-DetectingPeriodic}
Torleif Veen.
\newblock Detecting periodic elements in higher topological {H}ochschild
  homology.
\newblock {\em Geom. Topol.}, 22(2):693--756, 2018.

\bibitem{Waldhausen-Chromatic}
Friedhelm Waldhausen.
\newblock Algebraic {$K$}-theory of spaces, localization, and the chromatic
  filtration of stable homotopy.
\newblock In {\em Algebraic topology, {A}arhus 1982 ({A}arhus, 1982)}, volume
  1051 of {\em Lecture Notes in Math.}, pages 173--195. Springer, Berlin, 1984.

\bibitem{WaldhausenKT}
Friedhelm Waldhausen.
\newblock Algebraic {$K$}-theory of spaces.
\newblock In {\em Algebraic and geometric topology (New Brunswick, N.J.,
  1983)}, volume 1126 of {\em Lecture Notes in Math.}, pages 318--419.
  Springer, Berlin, 1985.

\end{thebibliography}

\end{document}